\newcommand{\opcit}{\textit{op.~cit.}}
\newcommand{\Q}{\mathbb{Q}}
\newcommand{\C}{\mathbb{C}}
\newcommand{\Z}{\mathbb{Z}}
\newcommand{\F}{\mathbb{F}}
\newcommand{\A}{\mathbb{A}}
\newcommand{\red}{\mathrm{red}}
\newcommand{\Zar}{\mathrm{Zar}}
\newcommand{\cdh}{\mathrm{cdh}}
\newcommand{\mot}{\mathrm{mot}}
\newcommand{\cO}{\mathcal{O}}
\newcommand{\cA}{\mathcal{A}}
\newcommand{\cB}{\mathcal{B}}
\newcommand{\cF}{\mathcal{F}}
\newcommand{\cC}{\mathcal{C}}
\newcommand{\cE}{\mathcal{E}}
\newcommand{\cI}{\mathcal{I}}
\newcommand{\cJ}{\mathcal{J}}
\newcommand{\cL}{\mathcal{L}}
\newcommand{\fp}{\mathfrak{p}}
\newcommand{\fm}{\mathfrak{m}}
\newcommand{\cn}{\mathrm{cn}}
\newcommand{\gen}{\mathrm{gen}}
\newcommand{\cla}[1]{\prescript{\mathrm{cl}}{}{#1}}
\newcommand{\on}{\,\mathrm{on}\,}
\newcommand{\E}{\mathbb{E}}
\newcommand{\aperf}{\mathrm{aperf}}
\newcommand{\CAlg}{\mathrm{CAlg}}
\newcommand{\LSym}{\mathrm{LSym}}
\renewcommand{\E}{\mathbb{E}}
\newcommand{\Spc}{\mathrm{Spc}}
\newcommand{\lto}{\longrightarrow}
\newcommand{\mmod}{/\!\!/}
\newcommand{\id}{\mathrm{id}}
\DeclareMathOperator{\Bl}{Bl}
\DeclareMathOperator{\dBl}{dBl}
\DeclareMathOperator{\im}{im}
\DeclareMathOperator{\Pro}{Pro}
\DeclareMathOperator{\Spec}{Spec}
\DeclareMathOperator{\Sp}{Sp}
\DeclareMathOperator*{\colim}{colim}
\DeclareMathOperator{\Mod}{Mod}
\DeclareMathOperator{\Perf}{Perf}
\DeclareMathOperator{\Map}{Map}
\DeclareMathOperator{\map}{map}
\DeclareMathOperator{\Fun}{Fun}
\DeclareMathOperator{\Ind}{Ind}
\DeclareMathOperator{\fib}{fib}
\DeclareMathOperator{\TC}{TC}
\DeclareMathOperator{\THH}{THH}
\DeclareMathOperator{\HN}{HN}
\DeclareMathOperator{\QCoh}{QCoh}
\DeclareMathOperator{\Sym}{Sym}
\DeclareMathOperator{\vdim}{vdim}
\DeclareMathOperator{\Frac}{Frac}
\DeclareMathOperator{\gr}{gr}
\DeclareMathOperator{\rank}{rank}
\newcommand{\dSch}{\mathrm{dSch}}
\newcommand{\ol}[1]{\overline{#1}}
\newtheorem{thm}{Theorem}
\newtheorem*{thm*}{Theorem}
\newtheorem{cor}[thm]{Corollary}
\newtheorem*{cor*}{Corollary}
\newtheorem{lemma}[thm]{Lemma}
\newtheorem{prop}[thm]{Proposition}
\theoremstyle{definition}
\newtheorem{dfn}[thm]{Definition}
\newtheorem{ex}[thm]{Example}
\newtheorem*{dfn*}{Definition}
\theoremstyle{remark}
\newtheorem{rem}[thm]{Remark}
\newtheorem*{claim}{Claim}
\numberwithin{thm}{section}
\newtheorem*{rem*}{Remark}
\newtheorem*{rems*}{Remarks}
\newtheorem*{ex*}{Example}
\theoremstyle{plain}
\newcounter{zaehler}
\newtheorem{introthm}[zaehler]{Theorem}
\title{On pro-cdh descent on derived schemes}
\author{Shane Kelly}
\address{Graduate School of Mathematical Sciences,
University of Tokyo,
3-8-1 Komaba Meguro-ku,
Tokyo 153-8914, Japan}
\email{shanekelly@g.ecc.u-tokyo.ac.jp}
\author{Shuji Saito}
\address{Graduate School of Mathematical Sciences,
University of Tokyo,
3-8-1 Komaba Meguro-ku,
Tokyo 153-8914, Japan}
\email{sshuji.goo@gmail.com}
\author{Georg Tamme}
\address{Institut f\"ur Mathematik, Fachbereich 08, Johannes Gutenberg-Universit\"at Mainz, D-55099 Mainz, Germany}
\email{georg.tamme@uni-mainz.de}
\thanks{The first author is supported by JSPS KAKENHI Grant (19K14498) and Bilateral Joint Research Projects (120213206). The second author is supported by JSPS Grant-in-aid (B) \#20H01791 representative Shuji Saito.
The third author is partially supported by the DFG through TRR 326 (Project-ID 444845124).}
\date{\today}
\begin{document}

\begin{abstract}
Grothendieck's formal functions theorem states that the coherent cohomology of a Noetherian scheme can be recovered from that of a blowup and the infinitesimal thickenings of the center and of the exceptional divisor of the blowup. In this article, we prove an analogous descent result, called ``pro-cdh descent'', for certain cohomological invariants of arbitrary quasi-compact, quasi-separated derived schemes. Our results in particular apply to algebraic $K$-theory, topological Hochschild and cyclic homology, and the cotangent complex.

As an application, we deduce that $K_n(X) = 0$ when $n < -d$ for quasi-compact, quasi-separated derived schemes $X$ of valuative dimension $d$. This generalises Weibel's conjecture, which was originally stated for Noetherian (non-derived) $X$ of Krull dimension $d$, and proved in this form in 2018 by Kerz, Strunk, and the third author.
\end{abstract}

\maketitle

\tableofcontents

\section{Introduction}

An example of an 
\emph{abstract blowup square of schemes} is a cartesian square
\begin{equation}
	\label{abu}
\begin{tikzcd}
 E \ar[d]\ar[r, hook] & Y \ar[d, "p"] \\ 
 Z \ar[r, hook, "i"] & X 
\end{tikzcd}
\end{equation}
where $Y = \Bl_{Z}(X)$ is the blowup of a Noetherian scheme $X$ in a closed subscheme $Z$. More generally, an abstract blowup square of schemes is a cartesian square \eqref{abu}  of (potentially non-Noetherian schemes) 
where $i$ is a finitely presented closed immersion and the morphism $p$ is finitely presented, proper and an isomorphism over the open complement of $Z$ in $X$. 
Coverings $\{Z \hookrightarrow X, Y \to X\}$ for all abstract blowup squares as above together with the Nisnevich coverings generate Suslin and Voevodsky's \emph{cdh} topology \cite{SVBK}, which plays an important role in the study of motives. It is an interesting question how cohomological invariants behave under such abstract blowup squares.

Thomason \cite{ThomasonBlowup} (see also \cite[Prop.~1.5]{MR2415380}) proved that for $X$ quasi-compact, quasi-separated (qcqs, for short), $Z \hookrightarrow X$ a \emph{regular} closed immersion, and $Y$ the blowup of $X$ in $Z$, the square \eqref{abu} 
gives rise to a (homotopy) cartesian square of algebraic $K$-theory spectra and hence to a long exact sequence of algebraic $K$-groups
\[
\dots \lto K_{i}(X) \lto K_{i}(Z) \oplus K_{i}(Y) \lto K_{i}(E) \lto K_{i-1}(X) \lto \dots
\]
For general abstract blowup squares \eqref{abu}, however, this will fail.
If $X$ is Noetherian, one can repair this defect by taking infinitesimal information into account.
More precisely, if $Z(n)$ and $E(n)$ denote the $n$-th infinitesimal thickenings of $Z$ in $X$ and $E$ in $Y$, respectively, Theorem A of \cite{KST} states that the square of pro-spectra
\[
\begin{tikzcd}
 K(X) \ar[d]\ar[r] & \{K(Z(n))\}_{n} \ar[d] \\ 
 K(Y) \ar[r] & \{ K(E(n))\}_{n} 
\end{tikzcd}
\]
is  cartesian 
in a certain weak sense (see Section~\ref{sec:procdh} for a definition). In particular, it
induces a long exact sequence of pro-abelian groups. This result can be viewed as a $K$-theoretic version of Grothendieck's formal function's theorem \cite[Thm.~4.1.5]{EGAIII}. It had several important precursors \cite{MR1935844,Cortinas,MR2629600,GH,GH2,Mor,Morrow-pro-unitality}. Because of the role of abstract blowup squares in the cdh topology, such a  result is often called ``pro-cdh descent.''\footnote{We remark that it would be more appropriate to call this ``pro-cdh excision'' since it is not precisely a descent statement for some topology. However, together with Nisnevich excision, it does imply actual (\v{C}ech) descent for the pro-cdh topology of \cite{Kelly:2024aa}, see Theorem~6.1 there. For this reason, and to be in line with the existing literature we stick to the term ``pro-cdh descent.''}

This pro-cdh descent result plays a central role in the resolution \cite{KST} of Weibel's $K$-dimension conjecture,  
which states that the $K$-groups of a Noetherian scheme $X$ vanish below the negative of its Krull dimension,
 and in the development of a continuous $K$-theory of rigid spaces \cite{Morrow:2016aa,KST-non-arch,MR4637146}. 

Given the rising interest in non-Noetherian schemes, which appear naturally, for example, when working over a perfectoid base ring such as $\cO_{\C_{p}}$, it is an obvious question, whether, or in which form,  pro-cdh descent  holds in this general setting. 
The following example, a variant of which was constructed by Dahlhausen and the third author \cite{Dahlhausen} precisely for that purpose and which was independently studied by the first two authors \cite[Footnote 2]{Kelly:2024ab} with respect to the pro-cdh topology, shows that pro-cdh descent as formulated above does not hold for general abstract blowup squares of non-Noetherian schemes.
\begin{ex} \label{ex:no-pro-cdh}
Let $R$ be a valuation ring of dimension at least 2. Let $0 \subsetneq \fp \subsetneq \fm$ be prime ideals in $R$, and choose $x \in \fm\setminus \fp$ and $y \in \fp\setminus \{0\}$. Then $x^{n}$ divides $y$ for all $n\geq 1$ and for each $n$ the square 
\[
\begin{tikzcd}
 \Spec(R/(x^{n})) \ar[d, "\id"]\ar[r, hook] & \Spec(R/(y)) \ar[d] \\ 
 \Spec(R/(x^{n})) \ar[r, hook] & \Spec(R/(xy))
\end{tikzcd}
\]
is an abstract blowup square. If the induced square of $K$-theory pro-spectra would be weakly cartesian, the map $K(R/(xy)) \to K(R/(y))$ would be an equivalence. However, as $y^{2} =0$ in $R/(xy)$, $1+y$ is a unit and defines a non-trivial element in the kernel of $K_{1}(R/(xy)) \to K_{1}(R/(y))$.
\end{ex}

It turns out that the reason for this failure is that we took classical quotients when forming the infinitesimal thickenings which discards torsion information. More concretely, let $A$ be a ring and $f$ an element of $A$. Derived algebraic geometry allows us to form the derived quotients $A \mmod f^{n}$ which are certain simplicial commutative rings. Then $\pi_{0}(A \mmod f^{n})$ is the classical quotient $A/f^{n}$, whereas $\pi_{1}(A\mmod f^{n}) \cong \ker(f^{n}\colon A \to A)$ still carries the information about the $f^{n}$-torsion. We can then consider the pro-system $\{A\mmod f^{n}\}_{n}$ as a (derived) formal completion of $A$. 
For a Noetherian ring $A$, this formal completion is in fact equivalent to the usual (pro-)completion $\{A/f^{n}\}_{n}$. 
Our main theorem shows that with this modification, replacing infinitesimal neighbourhoods by ``derived infinitesimal neighbourhoods,'' pro-cdh descent does in fact hold in general. 
It is natural to formulate it in the generality of derived schemes. For a precise definition and the involved finiteness conditions, we refer to Section~\ref{sec:preliminaries}.

\begin{introthm} 
	\label{thmA}
Let $f\colon Y \to X$ be a proper, locally almost finitely presented morphism of 
quasi-compact and quasi-separated (qcqs for short) derived schemes which is an isomorphism outside a closed subset $Z \subseteq |X|$ whose open complement is quasi-compact. Denote by $X_{Z}^{\wedge}$ and $Y_{Z}^{\wedge}$ the formal completion of $X$ (respectively $Y$) along $Z$ (respectively $f^{-1}(Z)$) viewed as ind-derived schemes. Then the square of pro-spectra
\begin{equation*}
\begin{tikzcd}
 K(X) \ar[d]\ar[r] & K(X_{Z}^{\wedge}) \ar[d] \\ 
 K(Y) \ar[r] & K(Y_{Z}^{\wedge})
\end{tikzcd}
\end{equation*}
is weakly cartesian.
\end{introthm}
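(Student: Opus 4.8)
The plan is to reduce the statement about algebraic $K$-theory to a more structural statement about the symmetric monoidal categories of perfect complexes, and then to exploit a localization/gluing formalism.

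First I would recall that for a qcqs derived scheme $W$, algebraic $K$-theory $K(W) = K(\Perf(W))$ is a localizing (indeed, finitary localizing) invariant of the stable $\infty$-category $\Perf(W)$, and that the formal completion $X_Z^\wedge$ along $Z$, viewed as an ind-scheme, has $K(X_Z^\wedge) = \colim_n K(X_n)$ for a suitable exhaustive system of ``derived infinitesimal thickenings'' $X_n$ of $Z$ in $X$; since we are working with pro-spectra this colimit is dual to the pro-system $\{K(X_n)\}_n$. The square in the statement is therefore a square of pro-spectra built out of the four categories $\Perf(X)$, $\Perf(Y)$, and the pro-systems $\{\Perf(X_n)\}_n$, $\{\Perf(Y_n)\}_n$.

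The key input is a Milnor-square / pro-excision statement at the categorical level: I expect that the commutative square of stable $\infty$-categories
\[
\begin{tikzcd}
\Perf(X) \ar[d]\ar[r] & \{\Perf(X_n)\}_n \ar[d] \\
\Perf(Y) \ar[r] & \{\Perf(Y_n)\}_n
\end{tikzcd}
\]
becomes cartesian after applying any localizing invariant, in the appropriate pro-sense. To prove this I would use the Land--Tamme theorem on excision in $K$-theory (and its refinements): given the pullback square of $\E_\infty$- (or simplicial commutative) rings/ring stacks locally modeling the situation, one forms the ``$\odot$-construction'' $A \odot_B^{B'} $ and shows that the failure of the naive square to be cartesian is measured by a Tor-unitality / nilpotence condition which, crucially, \emph{improves along the pro-system}: the relevant bimodules become more and more connected (or the relevant ideals more and more nilpotent in the derived sense) as $n \to \infty$, so the obstruction pro-vanishes. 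Concretely, on affines the square is governed by a map of pairs $(A, I) \to (A', I')$ with $A/I \simeq A'/I'$, where $I$ is generated (up to derived nilpotence controlled by $n$) by a finitely presented ideal; the derived quotients $A \sslash I^n$ are designed exactly so that the relative term $\fib(A \to A\sslash I^n)$, and the analogous term for $A'$, agree pro-systematically, which is the derived replacement for Grothendieck's formal functions theorem. One must also handle the passage from the affine/local statement to the global one: this is done by Nisnevich (Zariski) descent for $K$-theory on both $X$ and $Y$ together with properness of $f$ and quasi-compactness of $X \setminus Z$, reducing to a finite affine cover, and then a Mayer--Vietoris / excision bookkeeping argument over the cover. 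Base-change compatibility of formal completions with the affine covering has to be checked, but this is formal once the finiteness hypotheses (locally almost finite presentation, properness) are in place.

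I would organize the proof in the following order: (1) set up the precise notion of ind-scheme formal completion and identify $K(X_Z^\wedge)$ with $\colim_n K(X_n)$, dualizing to the pro-system; (2) reduce to a statement for any finitary localizing invariant applied to the square of (pro-)categories of perfect complexes, using that $K$ is such an invariant; (3) prove the affine case via the Land--Tamme excision formalism, the key point being the pro-vanishing of the relevant obstruction term coming from the derived-nilpotent structure of $\{A\sslash I^n\}_n$ and the identification $A\sslash I^n \otimes_A A' \simeq$ the corresponding derived thickening on $Y$ (here properness and base change enter); (4) globalize by Zariski--Nisnevich descent and a cover argument. The main obstacle, and the technical heart of the paper, is step (3): showing that the obstruction to excision pro-vanishes, i.e.\ that the derived infinitesimal thickenings on $X$ and $Y$ become ``the same'' in the pro-category in a way strong enough to be detected by every localizing invariant — this is where the derived structure (the extra homotopy groups $\pi_i(A\sslash f^n)$ carrying the torsion, as highlighted right before the theorem) is essential and where the classical (underived) statement fails, as Example~\ref{ex:no-pro-cdh} shows. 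A secondary subtlety is that the squares are only \emph{weakly} cartesian, so one must be careful to track the pro-system indexing and not expect levelwise equivalences.
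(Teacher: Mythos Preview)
Your plan has a genuine gap in step~(3). The Land--Tamme $\odot$-formalism applies to a square of \emph{rings} (or at best to a Milnor-type square where the relevant map is affine). Reducing $X$ to an affine by Zariski descent is fine, but even when $X=\Spec(A)$ is affine, the morphism $f\colon Y\to X$ is only proper, not affine: $Y$ need not be $\Spec(A')$ for any derived ring $A'$. So your ``on affines the square is governed by a map of pairs $(A,I)\to(A',I')$'' is simply not available for a general proper lafp $U$-modification such as a blowup. Covering $Y$ by affines does not help either, since the resulting squares are no longer of Milnor type relative to the cover of $X$, and the pro-excision obstruction you want to analyse is not defined in that generality.

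The paper circumvents exactly this problem by a structure theorem for modifications (Theorem~\ref{thm:structure-abu}): any lafp $U$-modification is, after a further lafp $U$-modification $Y'\to Y$, dominated by a composite of a \emph{derived blowup} $\widetilde X\to X$ and a \emph{closed} $U$-modification $Y'\hookrightarrow\widetilde X$. These two special cases are then handled by completely different mechanisms. For the closed (hence finite) modification, your Land--Tamme pro-excision idea is precisely what is used (Proposition~\ref{prop:descent-finite-modification}); but for the derived blowup the argument is categorical rather than ring-theoretic: one uses Khan's semi-orthogonal decomposition of $\Perf(\dBl_Z X)$, base-changes it along $\Perf(X)\to\Perf(Z')$ for each derived thickening $Z'$, and observes that the extra pieces $\Perf(S)$ are already supported on $Z$, so their pro-completion is constant (Proposition~\ref{prop:descent-derived-bu}). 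A retract argument then assembles the two special cases into the general one. The missing ingredients in your proposal are therefore (i) the geometric factorization through a derived blowup and (ii) the semi-orthogonal--decomposition treatment of that blowup; without them, the pro-vanishing of the Land--Tamme obstruction is not even formulated for the morphism $f$ you start with.
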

This result is not specific to $K$-theory. It holds more generally for every localizing invariant which is $k$-connective in the sense of \cite{LT} for some integer $k$; see Theorem~\ref{thm:pro-cdh} for the general statement. For instance, it also applies to topological Hochschild homology $\THH$, topological cyclic homology $\TC$, and rational negative cyclic homology. 

\begin{rem}
One may ask whether the same statement also holds for spectral schemes. As our proof makes essential use of the theory of derived blowups of derived schemes developed by Khan and Rydh \cite{KhanRydh}, we cannot treat this more general case with our methods.
\end{rem}

We also prove the analogous result for the cotangent complex, see Theorem~\ref{thm:procdh-cotangent}. By the arguments of \cite{ElmantoMorrow} one then obtains pro-cdh descent in the above form for motivic cohomology (Corollary~\ref{cor:pro-cdh-motivic}). We thank Matthew Morrow for suggesting this result for the cotangent complex and indicating the application to motivic cohomology.

In \cite{KST}, pro-cdh descent of $K$-theory was used to derive Weibel's conjecture on the vanishing of negative $K$-groups of a Noetherian scheme $X$ below $-\dim(X)$. It is an obvious question what we can say about negative $K$-groups if we merely assume $X$ to be qcqs; see e.g.~Morrow's Oberwolfach talk \cite{MorrowOberwolfach}. 
As the Krull dimension doesn't have good properties for general non-Noetherian schemes, one might expect that instead the valuative dimension introduced by Jaffard~\cite{Jaffard} (see \cite{EHIK} or \cite[\S 7.1]{Kelly:2024aa} for accounts) gives the correct bounds. It coincides with the Krull dimension if $X$ is Noetherian.  Indeed, we prove the following.

\begin{introthm}[Theorem~\ref{thm:Weibel-vanishing}] \label{thmB}
 Let $X$ be a qcqs spectral scheme. Then the following hold.
\begin{enumerate}
\item $K_{-i}(X) = 0$ for all $i> \vdim(X)$.
\item For all $i \geq \vdim(X)$ and any integer $r \geq 0$, the pullback map $K_{-i}(X) \to K_{-i}(\A^{r}_{X})$ is an isomorphism. In other words, $X$ is $K_{-\vdim(X)}$-regular.
\end{enumerate}
\end{introthm}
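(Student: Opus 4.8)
The plan is to deduce Theorem~\ref{thmB} from pro-cdh descent (Theorem~\ref{thmA}) by running the induction-on-dimension argument of \cite{KST}, but with the valuative dimension in place of the Krull dimension and with derived blowups in place of classical ones. First I would reduce to the affine case and, via a continuity/Noetherian-approximation argument, recall that a qcqs scheme of valuative dimension $d$ is a cofiltered limit of finite-type $\Z$-schemes; since the valuative dimension is the supremum of Krull dimensions of finitely presented models and behaves well along such limits (this is the whole point of using $\vdim$ rather than $\dim$ in the non-Noetherian setting, cf.~\cite{EHIK}), one gets a handle on the dimension. The base case $\vdim(X) = 0$ means every local ring is a valuation ring of dimension $0$, i.e.\ a field, so $X$ is (after reduction) a disjoint union of spectra of fields up to nilpotents; here $K_{-i}$ vanishes for $i>0$ and $K_0$-regularity is classical.

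For the inductive step, given $X$ affine with $\vdim(X) = d$, I would choose a closed subset $Z \subseteq |X|$ with $\vdim(Z) < d$ together with a derived blowup $Y = \mathrm{dBl}_Z(X) \to X$ (in the sense of Khan--Rydh \cite{KhanRydh}) which is an isomorphism away from $Z$, arranged so that $\vdim(Y) \le d$ and, crucially, so that $Y$ has \emph{smaller} dimension in a suitable stratified sense, or more precisely so that the exceptional locus $f^{-1}(Z)$ and $Z$ both have valuative dimension $<d$. Applying Theorem~\ref{thmA} to this square gives a weakly cartesian square of pro-spectra relating $K(X)$, $K(Y)$, $K(X_Z^\wedge)$, $K(Y_Z^\wedge)$; passing to homotopy groups and using that a weakly cartesian square of pro-spectra yields a Milnor/Mayer--Vietoris long exact sequence of pro-abelian groups, one bounds $K_{-i}(X)$ in terms of $K_{-i}$ and $K_{-i+1}$ of $Y$, of the formal completion $X_Z^\wedge$, and of $Y_Z^\wedge$. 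The formal completions are pro-systems of derived schemes supported on $Z$ resp.\ $f^{-1}(Z)$, which have valuative dimension $<d$, so by induction their negative $K$-groups (as pro-abelian groups) vanish in the required range; here one needs that $K$-theory of the derived infinitesimal thickenings $A\sslash f^n$ is controlled by $\vdim$, which follows since $\pi_0$ of these is a classical quotient of the same dimension and the higher homotopy is nilpotent, so nil-invariance-type arguments (or the connectivity of the localizing invariant) apply. One then feeds this back to conclude $K_{-i}(X) = 0$ for $i > d$, and a parallel argument with $\A^r_X$ in place of $X$ throughout (all the constructions being compatible with the base change to $\A^r$) gives the $K_{-d}$-regularity statement in part (2).

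The main obstacle, I expect, is the geometric input: producing, for an arbitrary qcqs (possibly highly non-Noetherian) $X$ of valuative dimension $d$, a well-behaved derived blowup center $Z$ with $\vdim(Z)<d$ whose blowup $Y$ genuinely has controlled valuative dimension — in the Noetherian case one uses Noether normalization / generic smoothness type arguments that are unavailable here, so one must instead exploit the defining property of $\vdim$ in terms of valuation rings and the approximation by finitely presented models, and check that derived blowups (which a priori only agree with classical blowups on the $\pi_0$-level when the center is nice) interact correctly with these dimension bounds. A secondary technical point is bookkeeping with pro-spectra: one must ensure that ``weakly cartesian'' is strong enough to extract the long exact sequence of pro-abelian groups and that the relevant pro-systems are pro-zero in the right degrees, which requires the connectivity hypothesis on the localizing invariant to control the error terms uniformly in $n$. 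Once these are in place, the induction closes formally.
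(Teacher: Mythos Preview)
Your induction does not close. Even if you find a derived blowup $Y = \mathrm{dBl}_{Z}(X) \to X$ with $\vdim(Z) < d$ and $\vdim(f^{-1}(Z)) < d$, the blowup itself satisfies $\vdim(Y) = d$, not $< d$. Applying pro-cdh descent and the inductive hypothesis to the formal completions therefore yields at best an isomorphism $K_{-i}(X) \cong K_{-i}(Y)$ for $i > d$; you have traded one $d$-dimensional scheme for another and made no progress. There is no single blowup that reduces the problem to strictly lower valuative dimension, and your phrase ``smaller dimension in a suitable stratified sense'' does not correspond to any mechanism that would make the induction terminate.

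The paper's argument, following Kerz \cite{KerzICM}, avoids this by working \emph{element by element}. After reducing to $X = \Spec(R)$ local, classical, reduced (and, via a separate Zorn's-lemma argument using excision for closed coverings, to $R$ an integral domain), one fixes $\gamma \in K_{-i}(R)$ with $i > d$, descends $\gamma$ to a finite-type $\Z$-subalgebra, and invokes the Kerz--Strunk killing lemma \cite[Prop.~5]{KerzStrunk} to produce a finitely generated ideal $I \subseteq R$ such that $\gamma$ dies in $K_{-i}(\Bl_{I}(R))$. The key new input, which your proposal is missing entirely, is a non-Noetherian generalization of the theory of \emph{reductions of ideals}: the analytic spread satisfies $\ell(I) \leq \vdim(R) = d$, and some power of $I$ admits a reduction $J$ generated by $\ell(I)$ elements. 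The derived blowup in these $\ell(I)$ generators is then covered by $\ell(I) \leq d$ affine opens, and this number of charts controls connectivity: one gets that $K(X,Z) \to K(\Bl_{Z}(X), E)$ is $(-\ell(I)+1)$-connective (Proposition~\ref{prop:blowup-connectivity-non-fp}). Combined with the inductive vanishing on $Z$ and $E$, this forces $K_{-i}(X) \xrightarrow{\sim} K_{-i}(\Bl_{Z}(X))$, and since $\gamma$ maps to zero there, $\gamma = 0$. Note also that the proof uses the specific descent results for derived blowups (Proposition~\ref{prop:descent-derived-bu}, an honest cartesian square) and finite morphisms (Lemma~\ref{lem:descent-finite-non-fp}) with their explicit connectivity bounds, not the general weakly-cartesian statement of Theorem~\ref{thmA}.
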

Here by a spectral scheme we mean more precisely one with connective structure sheaf, and in (2), $\A^{r}$ denotes either the flat or the smooth affine space.
\begin{rem}
We  don't know an example of a scheme $X$ with $\vdim(X) > \dim(X)$ and $K_{-\vdim(X)}(X) \not=0$.
\end{rem}

This theorem immediately implies the same vanishing for Weibel's homotopy $K$-theory $KH(X)$. In other words, we get a new proof of assertion (1) in the following theorem, which we include here for the sake of completeness. As we discuss below, this theorem is well known to the experts. 
We write $L_{\cdh}$ for the cdh sheafification functor on presheaves of spectra and $K_{\geq 0}$ for the presheaf of connective algebraic $K$-theory.

\begin{thm} \label{thm:Weibel-vanishing-KH}
Let $X$ be a qcqs scheme of finite valuative dimension. 
\begin{enumerate}
\item $KH_{-i}(X) = 0$ for all $i > \vdim(X)$.
\item The natural maps $L_{\cdh}K_{\geq 0} \to L_{\cdh}K \to KH$ are equivalences.
\end{enumerate}
\end{thm}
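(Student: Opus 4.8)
The plan is to deduce assertion~(1) directly from Theorem~\ref{thm:Weibel-vanishing}, and to obtain assertion~(2) by combining the known cdh descent of homotopy $K$-theory with the vanishing of negative $K$-theory on the points of the cdh topos.

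For~(1), I would use that $KH(X)$ is the geometric realization of the simplicial spectrum $[n] \mapsto K(\A^{n}_{X})$ obtained by applying $K$ to the cosimplicial scheme $\Delta^{\bullet}_{X}$, each $\Delta^{n}_{X}$ being isomorphic to $\A^{n}_{X}$. The skeletal filtration of this realization is exhaustive, and the cofibre of $\mathrm{sk}_{k-1} \to \mathrm{sk}_{k}$ is $\Sigma^{k}$ applied to a retract of $K(\A^{k}_{X})$; hence $\pi_{-i}$ of that cofibre is a retract of $K_{-i-k}(\A^{k}_{X})$. Since $\vdim(\A^{k}_{X}) = \vdim(X) + k$ (see \cite{Jaffard, EHIK}), for every $i > \vdim(X)$ and every $k \geq 0$ one has $i + k > \vdim(\A^{k}_{X})$, so $K_{-i-k}(\A^{k}_{X}) = 0$ by Theorem~\ref{thm:Weibel-vanishing}(1) applied to the ordinary qcqs scheme $\A^{k}_{X}$. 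As also $\pi_{-i}$ of the $0$-skeleton $K(X)$ vanishes, an induction along the skeleta (using the long exact sequences of the cofibre sequences $\mathrm{sk}_{k-1} \to \mathrm{sk}_{k} \to \mathrm{sk}_{k}/\mathrm{sk}_{k-1}$) shows $\pi_{-i}(\mathrm{sk}_{k}) = 0$ for all $k$, and passing to the colimit gives $KH_{-i}(X) = 0$. Alternatively, once~(2) is available, (1) follows from the cdh-descent spectral sequence $H^{s}_{\cdh}(X, a_{\cdh}K_{t}) \Rightarrow KH_{t-s}(X)$ together with $\mathrm{cd}_{\cdh}(X) \leq \vdim(X)$ \cite{EHIK, Kelly:2024aa} and the fact that $K_{\geq 0}$ has no homotopy in negative degrees.

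For~(2), the cofibre of $L_{\cdh}K_{\geq 0} \to L_{\cdh}K$ is $L_{\cdh}(\tau_{\leq -1}K)$, whose cdh-homotopy sheaves are the sheafifications $a_{\cdh}K_{n}$ with $n \leq -1$. These vanish: a conservative family of points of the cdh topos of a qcqs scheme of finite valuative dimension is given by (Henselian) valuation rings \cite{EHIK}, and the negative $K$-groups of a valuation ring are trivial --- for instance because finitely generated projective modules over a polynomial or Laurent-polynomial extension of a valuation (indeed Pr\"ufer) ring are extended from the base, so the Bass fundamental theorem forces $K_{-i}(V) = 0$ for all $i \geq 1$. Hence $L_{\cdh}(\tau_{\leq -1}K) = 0$ and the first map is an equivalence. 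For the second map, homotopy $K$-theory satisfies cdh descent (Cisinski, following Haesemeyer), so $K \to KH$ induces $L_{\cdh}K \to L_{\cdh}KH \simeq KH$; that this is an equivalence --- equivalently, that $\fib(K \to KH)$ has vanishing cdh-sheafification --- is the cdh-descent statement for algebraic $K$-theory, known in this generality from work of Cisinski, Kerz--Strunk--Tamme \cite{KST}, and Elmanto--Hoyois--Iwasa--Kelly \cite{EHIK}, where one once more reduces cdh-locally to valuation rings and invokes the $\A^{1}$-invariance of their $K$-theory.

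The steps that are not purely formal are the description of the cdh points via valuation rings, the homotopy-invariance properties of the $K$-theory of valuation rings, and the cdh descent for $KH$; all three are standard, which is why we content ourselves with references. Thus the main difficulty is conceptual rather than technical: assertion~(2) genuinely requires cdh-topological input and cannot be extracted from Theorem~\ref{thm:Weibel-vanishing} by a soft argument, whereas assertion~(1) --- the statement reproved here --- drops out of Theorem~\ref{thm:Weibel-vanishing} together with the good behaviour of $\vdim$ under $\A^{1}$.
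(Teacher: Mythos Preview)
Your proof is correct, and for assertion~(2) it is essentially the paper's argument: both of you use that $KH$ is a cdh sheaf, that the cdh points are henselian valuation rings, and that on valuation rings $K$ is connective and coincides with $KH$. The paper packages this via the descent spectral sequence, while you split the composite into two cofibre arguments, but the substance is identical. One minor quibble: calling $L_{\cdh}K \simeq KH$ ``the cdh-descent statement for algebraic $K$-theory'' and citing \cite{KST} for it is a bit misleading --- $K$ does \emph{not} satisfy cdh descent, and \cite{KST} only treats the Noetherian pro-cdh case; the input you actually use (and correctly spell out) is the $\A^{1}$-invariance of $K$ on valuation rings from \cite{KellyMorrow,KSTVorst} together with the identification of cdh points from \cite{EHIK}.

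For assertion~(1) you take a genuinely different primary route: you bypass the cdh machinery entirely and deduce the vanishing directly from Theorem~\ref{thm:Weibel-vanishing} via the skeletal filtration of $KH(X) = |K(\Delta^{\bullet}_{X})|$ and the formula $\vdim(\A^{k}_{X}) = \vdim(X)+k$. This is exactly the implication the paper alludes to in the sentence ``This theorem immediately implies the same vanishing for Weibel's homotopy $K$-theory'' but does not write out; the paper's formal proof of~(1) instead goes through~(2) and the cdh spectral sequence. Your approach has the virtue of making~(1) logically independent of the cdh descent input, at the cost of invoking the harder Theorem~\ref{thm:Weibel-vanishing}; the paper's approach makes~(1) a formal consequence of~(2), which is anyway needed, and only uses the softer valuation-ring computations. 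Both are fine; your ``alternatively'' paragraph is precisely the paper's route.
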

For schemes essentially of finite type over a  field of characteristic 0, this was first proven in \cite{Haesemeyer}. For general Noetherian $X$, (1) was first proven in \cite{KerzStrunk}  and (2) in \cite{KST}. The fact that $KH$ is a cdh sheaf is \cite{Cisinski}. In the general case, the theorem follows easily from recent results on the cdh-topology \cite{EHIK} and the $K$-theory of valuation rings \cite{KellyMorrow, KSTVorst}. In fact, the proof given under Noetherian assumptions in \cite{KellyMorrow} still works, and for the reader's convenience we reproduce this proof at the end of the paper. Alternatively, the proof of \cite{KerzStrunk} respectively \cite{KST} works mutatis mutandis, using the fact that a blowup does not increase the valuative dimension (this is not necessarily true for the Krull dimension).

\begin{rem} 
For arbitrary qcqs schemes $X$, Theorem~\ref{thm:Weibel-vanishing-KH} together with the cartesian square
\[
\begin{tikzcd}
 K \ar[d]\ar[r] & KH \ar[d] \\ 
 \mathrm{TC} \ar[r] & L_{\cdh}\mathrm{TC}, 
\end{tikzcd}
\]
$(-1)$-connectivity of topological cyclic homology on affines, and \cite{EHIK} imply the slightly weaker vanishing $K_{-i}(X)=0$  for $i>\vdim(X)+2$. Elmanto and Morrow (over fields) and  
Bouis (in general)  prove refinements of this for the motivic filtration \cite[Proof of Thm.~4.12]{ElmantoMorrow}, \cite[Prop.~5.5.4]{Bouis-thesis}.
\end{rem}

\subsection*{Structure of the arguments and the paper}

The proof of Theorem~\ref{thmA} follows the outline of \cite{KST}, 
but requires substantially more input from derived geometry.
It is reduced to the two special cases of derived blowups  and finite morphisms, respectively. This reduction is achieved by a structural result about modifications of derived schemes, Theorem~\ref{thm:structure-abu}, which we view as our main contribution here. In its proof, we need several preliminary results from derived algebraic geometry which we discuss in Section~\ref{sec:preliminaries}.

The case of Theorem~\ref{thmA} for derived blowups could essentially be proved as in \cite{KST}. We here present a stronger result with a simplified proof due to Antieau (\cite{Antieau-notes};  we reproduce his proof  in Proposition~\ref{prop:descent-derived-bu}). For the case of finite morphisms in Theorem~\ref{thmA}, compared to \cite{KST} we give a simple proof of a more general result (Proposition~\ref{prop:descent-finite-modification}). 

To prove Theorem~\ref{thmB} in Section~\ref{sec:Weibel-vanishing}, we roughly follow Kerz's approach \cite{KerzICM} combined with our pro-cdh-descent for finite morphisms. Another new input is a generalization of results of Swanson and Huneke \cite{SwansonHuneke} on reductions of ideals to the non-Noetherian setting in Subsection~\ref{subsec:reductions}.

\subsection*{Related work}

Another approach to pro-cdh descent statements has been proposed by Clausen and Scholze. It is based on condensed mathematics \cite{ScholzeCondensed} and Efimov's theory of localizing invariants of large categories \cite{Efimov:2024aa}.

In forthcoming work, the first two authors introduce a ``pro-cdh topology'' on qcqs derived schemes. It will follow from Theorem~\ref{thmA} (or \ref{thm:pro-cdh}) that every localizing invariant which is $k$-connective for some integer $k$ satisfies descent for that topology.

\subsection*{Acknowledgement}  
We thank Christian Dahlhausen for discussions about pro-cdh descent in the non-Noetherian setting which led to the variant of Example~\ref{ex:no-pro-cdh} and which was one of the starting points of this project.
We are grateful to Ben Antieau for allowing us to include his pro-excision result for derived blowups, which  is stronger than the one obtained in \cite{KST}. We thank Matthew Morrow for bringing our attention to the cotangent complex and further comments and Mauro Porta for related discussions. 
We thank David Rydh for enlightening comments on our Sections~\ref{sec:preliminaries} and \ref{sec:modifications}. In particular, he proposed to formulate the more precise version of Theorem~\ref{thm:structure-abu} including different finiteness hypotheses.
Finally, we thank the referee for a careful reading of the manuscript and useful comments.

\section{Preliminaries on derived algebraic geometry}
	\label{sec:preliminaries}
	
We freely use the language of derived algebraic geometry as developed by To{\"e}n--Vezzosi \cite{MR2394633,MR2137288}, Lurie \cite{SAG} and others. The following subsections mainly serve to fix some notation and recall some notions and facts that will be used later on. Readers familiar with derived algebraic geometry may safely skip this section and only come back when needed.

\subsection{Derived rings and schemes}
	\label{subs:derived-rings}
	
We write $\CAlg^{\Delta}$ for the $\infty$-category of simplicial commutative rings \cite[\S 25.1]{SAG} and refer to its objects as \emph{derived rings}. There is a forgetful functor $\CAlg^{\Delta} \to \CAlg^{\cn}$, where $\CAlg^{\cn}$ denotes the $\infty$-category of connective $\E_{\infty}$-algebras in spectra. This functor preserves small limits and colimits and is conservative. 

 A \emph{derived scheme} is a pair $X = (|X|, \cO_{X})$ consisting of a topological space $|X|$ and a sheaf $\cO_{X}$ of derived rings on $|X|$ such that $\cla X := (|X|, \pi_{0}\cO_{X})$ is a classical scheme and the higher homotopy sheaves $\pi_{i}\cO_{X}$ are quasi-coherent $\pi_{0}\cO_{X}$-modules. We call $\cla X$ the \emph{underlying classical scheme} or the \emph{classical truncation} of $X$.
 Derived schemes form the objects of an $\infty$-category $\dSch$, and similarly as above there is a forgetful functor from derived schemes to spectral schemes.
 There is also a functor of points approach to derived schemes. In other words, there is a fully faithful functor
 \[
 \dSch \to \Fun(\CAlg^{\Delta}, \Spc), 
 \]
 which coincides with the Yoneda embedding on affine derived schemes \cite[\S 1.6]{SAG}.

A map $f\colon X \to Y$ of derived schemes is called \emph{proper, a closed immersion, affine, or finite}, respectively, if the underlying map of classical schemes $\cla f$ has the corresponding property. If $U \subseteq |X|$ is an open subset, then $(U, \cO_{X}|_{U})$ is itself a derived scheme. Such a derived scheme is called an \emph{open subscheme of X} which we simply denote by $U$.

\subsection{Quasi-coherent modules}
	\label{subs:qcoh}

If $A$ is a derived ring, we write $\Mod(A)$ for the symmetric monoidal $\infty$-category of $A$-modules (in spectra). For a derived scheme $X$, we denote by $\QCoh(X)$ the category of quasi-coherent sheaves on $X$. These are stable $\infty$-categories with canonical t-structures, and we denote their connective part by $\Mod(A)^{\cn}$ and $\QCoh(X)^{\cn}$, respectively. Moreover, they only depend on the underlying spectrum or spectral scheme, respectively. If $X=\Spec(A)$ is affine, we have an equivalence $\QCoh(X) = \Mod(A)$. 

If $Z \subseteq |X|$ is a closed subset, we denote by $\QCoh(X \on Z)$ the full subcategory of $\QCoh(X)$ spanned by those quasi-coherent sheaves which are supported on $Z$.
If $X$ is quasi-compact and quasi-separated (qcqs, for short) and the open complement of $Z$ is quasi-compact, then $\QCoh(X \on Z)$ is compactly generated and its compact objects coincide with the perfect ones, \cite[Prop.~9.6.1.1]{SAG}, \cite[Prop.~A.9]{CMNN}. Here a quasi-coherent sheaf on $X$ is called \emph{perfect} if and only its restriction to each affine open subscheme $U=\Spec(A) \subseteq X$ belongs to the smallest thick stable subcategory of $\QCoh(U) = \Mod(A)$ containing $A$. We write $\Perf(X)$ and $\Perf(X  \on Z)$ for the corresponding subcategories.
The arguments used to prove this also apply to the case of connective sheaves:

\begin{lemma}\label{lem;QCohZXcn}
Let $X$ be a qcqs derived scheme, and let $Z \subseteq |X|$ be a closed subset with quasi-compact open complement. 
Then $\QCoh(X \on Z)^{\cn}$ is compactly generated and the inclusion $\QCoh(X \on Z)^{\cn} \hookrightarrow \QCoh(X)^{\cn}$ preserves compact objects. An object of $\QCoh(X \on Z)^{\cn}$ is compact if and only if it is perfect (as an object of $\QCoh(X)$).
\end{lemma}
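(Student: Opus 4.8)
The plan is to mimic the proof of the analogous statement for the stable category $\QCoh(X\on Z)$ (the cited \cite[Prop.~9.6.1.1]{SAG}, \cite[Prop.~A.9]{CMNN}), keeping track of connectivity throughout. The key structural input is that everything localizes over a finite affine cover, so the first step is a reduction to the affine case via a Mayer--Vietoris / Zariski-descent argument: if $X = U \cup V$ with $U, V$ quasi-compact opens for which the statement holds (and likewise for $U \cap V$), then one deduces it for $X$. Concretely, $\QCoh(X\on Z)^{\cn}$ is glued from $\QCoh(U\on Z\cap U)^{\cn}$ and $\QCoh(V \on Z \cap V)^{\cn}$ along $\QCoh(U\cap V \on Z \cap U \cap V)^{\cn}$; connective quasi-coherent sheaves satisfy Zariski descent since $\QCoh(-)^{\cn}$ is a sheaf and the t-structures are compatible with restriction. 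One has to check that a connective sheaf supported on $Z$ with quasi-compact complement, when restricted, remains of the same type — this is immediate — and that compactness can be tested locally, which is where quasi-compactness of the complement and qcqs-ness of $X$ enter, exactly as in the stable case.

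Second, I would treat the affine case $X = \Spec(A)$, $Z = V(I)$ for a finitely generated ideal $I = (f_1,\dots,f_r) \subseteq \pi_0 A$ (finite generation is available because the open complement is quasi-compact). Here $\QCoh(X \on Z)^{\cn}$ is the $\infty$-category of connective $A$-modules $M$ such that $M[f_i^{-1}] = 0$ for all $i$, equivalently $I$-nilpotent connective modules. The natural candidate compact generator is the Koszul complex $K(A; f_1^n,\dots,f_r^n)$, or rather its connective cover / the family of these as $n$ varies; these are perfect, supported on $Z$, and connective. I would show: (a) each such Koszul object is a compact object of $\QCoh(X\on Z)^{\cn}$ — this uses that it is compact (even perfect) in $\Mod(A)$ together with the fact that the inclusion $\QCoh(X\on Z)^{\cn} \hookrightarrow \QCoh(X)^{\cn}$ preserves filtered colimits, which holds because $\QCoh(X\on Z)^{\cn}$ is closed under filtered colimits in $\QCoh(X)^{\cn}$ (a filtered colimit of connective modules supported on $Z$ is again such); (b) the Koszul objects generate $\QCoh(X\on Z)^{\cn}$ under colimits and extensions — any nonzero connective $I$-nilpotent module receives a nonzero map from some Koszul object, by the standard local-cohomology/derived-completion argument, so the localizing-with-connective-covers subcategory they generate is everything. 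Combining (a) and (b) with the fact that $\QCoh(X\on Z)^{\cn}$ is presentable gives compact generation, and then a formal argument (a compact object is a retract of a finite colimit of the generators) identifies the compact objects with the perfect ones supported on $Z$: they are built from the Koszul objects by finitely many colimits and retracts, hence perfect, and conversely every perfect sheaf supported on $Z$ is compact by (a) applied to the generating perfects.

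The claim that compactness in $\QCoh(X\on Z)^{\cn}$ coincides with perfectness needs the two directions just sketched: "compact $\Rightarrow$ perfect" is the subtle one, since in a compactly generated category a compact object is a retract of a finite colimit of compact generators, and here the generators are perfect and perfect objects are closed under finite colimits and retracts inside $\QCoh(X)$, so one concludes; "perfect $\Rightarrow$ compact" follows once we know the perfect objects supported on $Z$ all lie in the thick subcategory generated by our chosen Koszul compact generators, which is again the content of (b) refined to thick (rather than localizing) subcategories, available because perfect complexes on an affine form the thick closure of $A$ and a perfect complex supported on $Z$ is $I$-nilpotent hence built from Koszul complexes in finitely many steps — this last assertion is the non-Noetherian analogue of the classical statement and is exactly what \cite[Prop.~A.9]{CMNN} supplies in the stable setting, whose proof I would cite as going through verbatim on connective covers.

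The main obstacle I anticipate is the interaction between the t-structure and compactness: one must be careful that taking connective covers (which is how one passes from perfect, possibly non-connective, Koszul complexes to objects of $\QCoh(X\on Z)^{\cn}$, or how one produces generators) does not destroy compactness, and that the connective truncation functor $\tau_{\geq 0}$ behaves well enough — in general $\tau_{\geq 0}$ need not preserve compact objects. The way around this, and presumably what the authors do, is to not truncate but rather to observe directly that suitable shifts of Koszul complexes are already connective, or to use that the relevant Koszul objects $K(A;f_1^n,\dots,f_r^n)$ have Tor-amplitude in $[0,r]$ so their $r$-fold suspensions are connective perfect objects supported on $Z$, and these already suffice as a generating family. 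Verifying that this family detects all of $\QCoh(X\on Z)^{\cn}$ (i.e. that a connective $I$-nilpotent module with no maps from any shifted Koszul complex vanishes) is the technical heart, but it follows from the standard identification of $I$-nilpotent modules with derived $I$-complete modules being a reflective/coreflective localization and the Koszul complexes computing that completion.
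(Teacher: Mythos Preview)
Your approach is essentially the same as the paper's: reduce to the affine case by the Mayer--Vietoris/excision machinery (the paper cites \cite[Ex.~10.3.0.2(4), Prop.~10.3.0.3, Thm.~10.3.2.1(b)]{SAG}), and in the affine case use Koszul-type generators (the paper cites \cite[Prop.~7.1.1.12(e)]{SAG}) together with compact generation of $\QCoh(X)^{\cn}$ by perfects \cite[Prop.~9.6.1.2]{SAG}.

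Two points where you make life harder than necessary. First, your ``main obstacle'' is not one: the derived quotients $A\mmod f_{1}^{n},\dots,f_{r}^{n}$ are already connective (each step $M \mapsto M\mmod f$ is a cofibre of an endomorphism of a connective module, hence connective), so no truncation or suspension is needed. Second, the identification ``compact $=$ perfect'' follows more directly than your retract-of-finite-colimit argument: once you know the inclusion $\QCoh(X\on Z)^{\cn}\hookrightarrow\QCoh(X)^{\cn}$ preserves both compact objects and filtered colimits, an object is compact in the subcategory if and only if its image is compact in $\QCoh(X)^{\cn}$, and the latter are exactly the perfect connective modules by \cite[Prop.~9.6.1.2]{SAG}. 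This is how the paper concludes.
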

\begin{proof}
The same argument as in the proof of \cite[Prop.~A.9]{CMNN} proves the first two claims:
In case $X$ is affine, these  follow from \cite[Prop.~7.1.1.12(e)]{SAG} and the fact that 
$\QCoh(X)^{\cn}$ is compactly generated by \cite[Prop.~9.6.1.2]{SAG}.
The reduction of the global case to the local case is done by 
\cite[Ex.~10.3.0.2 (4), Prop.~10.3.0.3, Th.~10.3.2.1 (b)]{SAG}. It then follows that an object of $\QCoh(X \on Z)^{\cn}$ is compact if and only if its image in $\QCoh(X)^{\cn}$ is compact. The compact objects in the latter category coincide with the perfect, connective $\cO_{X}$-modules by \cite[Prop.~9.6.1.2.]{SAG} again.
\end{proof}

\subsection{Free algebras}
	\label{subs:free-algebras}

If $A$ is a derived ring, we denote the category of derived $A$-algebras by $\CAlg^{\Delta}_{A}$. Its objects are derived rings $B$ together with a map of derived rings $A \to B$.  There is an obvious forgetful functor $\CAlg^{\Delta}_{A} \to \Mod(A)^{\cn}$ which admits a left adjoint, the free algebra functor, $\LSym^{*}_{A}\colon \Mod(A)^{\cn} \to \CAlg_{A}^{\Delta}$. If $M$ is a connective $A$-module, the underlying $A$-module of $\LSym^{*}_{A}(M)$ is the direct sum $\bigoplus_{n\geq 0} \LSym^{n}_{A}(M)$, where $\LSym^{n}_{A}(M)$ is the $n$-th derived symmetric power of $M$ as studied for instance by Quillen \cite[Constr.~25.2.2.6]{SAG}, whence the notation. 

These constructions globalise: 
For a derived scheme $X$, write $\CAlg^{\Delta}_{\cO_{X}}$ for the $\infty$-category of sheaves of derived rings on $|X|$ equipped with a map from $\cO_{X}$ such that the underlying sheaf of $\cO_{X}$-modules is quasi-coherent. There is an adjunction
\[
\LSym^{*}_{\cO_{X}} \colon \QCoh(X)^{\cn}  \rightleftarrows \CAlg^{\Delta}_{\cO_{X}} : \mathrm{forget},
\]
the forgetful functor is conservative and preserves sifted colimits. Consequently, $\LSym^{*}_{\cO_{X}}$ sends compact objects to compact objects.

For $\cA \in \CAlg^{\Delta}_{\cO_{X}}$ one can form the relative spectrum $\Spec(\cA)$ which comes with an affine morphism $\Spec(\cA) \to X$.

\subsection{Finiteness conditions}
	\label{subs:finiteness-conditions}

A map of derived rings $A \to B$ is called \emph{locally of finite presentation} if $B$ is a compact object of $\CAlg^{\Delta}_{A}$. It is called \emph{almost of finite presentation} if $B$ is an almost compact object $\CAlg^{\Delta}_{A}$, i.e.~each truncation $\tau_{\leq n}B$ is a compact object of $\tau_{\leq n}\CAlg^{\Delta}_{A}$; see \cite[\S 3.1]{LurieThesis} and \cite[\S 4.1]{SAG} for the analog notions for $\E_{\infty}$-algebras.
It turns out that $A \to B$ is almost of finite presentation if and only if the underlying map of $\E_{\infty}$-algebras is almost of finite presentation; the analog for being locally of finite presentation is wrong. 

For completeness, we also mention that for an integer $n\geq 0$, there is a notion of \emph{finite generation to order n}, see \cite[Def.~4.1.1.1]{SAG}, and a morphism is almost finitely presented if and only if it is of finite generation to order $n$ for all $n$. 

These finiteness conditions are stable under base change: If $A \to B$ is locally or almost of finite presentation or of finite generation to order $n$ and $A \to A'$ is an arbitrary map, then also  $A' \to B\otimes_{A} A'$ is locally or almost of finite presentation or of finite generation to order $n$, respectively \cite[Rem.~7.2.4.28]{HA}, \cite[Prop.~4.1.3.2]{SAG}. They are also stable under composition \cite[Rem.~7.2.4.29, Cor.~7.4.3.19]{HA}, \cite[Prop.~4.1.3.1]{SAG}.

If $A$ is Noetherian, i.e.~$\pi_{0}(A)$ is Noetherian in the classical sense and all higher homotopy groups are finitely generated $\pi_{0}(A)$-modules, then a derived $A$-algebra $B$ is almost of finite presentation if and only if $B$ is Noetherian and $\pi_{0}(B)$ is a classically finitely generated $\pi_{0}(A)$-algebra \cite[Prop.~3.1.5]{LurieThesis} or \cite[Prop.~7.2.4.31]{HA}.

A map $f\colon Y \to X$ of derived schemes is \emph{called locally of finite presentation} or \emph{locally almost of finite presentation} (\emph{lafp}, for short) if for all affine open subschemes $U=\Spec(A) \subseteq X$ and $V=\Spec(B) \subseteq Y$ with $f(V) \subseteq U$ the induced morphism $A \to B$ is locally of finite presentation or almost of finite presentation, respectively.
As in the affine case, these notions are stable under base change and composition, and there is a characterization in the Noetherian case.

For example, if $\cF$ is a perfect, connective $\cO_{X}$-module, i.e.~a compact object of $\QCoh(X)^{\cn}$, then  $\Spec(\LSym^{*}_{\cO_{X}}(\cF))$ is locally of finite presentation over $X$. 
Using this observation, we prove the following lemma.

\begin{lemma}\label{lem:closed-subscheme}
Let $X$ be a qcqs derived scheme, and let $U \subseteq X$ be a qc open subset with complement $Z$. Let $i\colon Y \to X$ be a closed immersion which is an isomorphism over $U$ and such that the underlying map of classical schemes $\cla Y \to \cla X$ is finitely presented. Then there exists a factorization $Y \to Y' \to X$ of $i$ such that 
\begin{enumerate}
\item the morphism $Y \to Y'$ is an isomorphism on underlying classical schemes,
\item the morphism $Y' \to X$ is a closed immersion locally of finite presentation and an isomorphism over $U$.
\end{enumerate}
\end{lemma}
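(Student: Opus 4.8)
The plan is to construct $Y'$ locally and then glue, using the affine case as the building block. So first suppose $X = \Spec(A)$ is affine. Since $i\colon Y \to X$ is a closed immersion, $Y = \Spec(B)$ with $B = A/\cI$ for some connective $\cO_X$-module $\cI$ sitting in a fiber sequence $\cI \to A \to B$; the hypothesis that $\cla Y \to \cla X$ is finitely presented means $\pi_0(\cI) = \ker(A \to \pi_0 B)$ is a finitely generated ideal of $\pi_0(A)$, say generated by $g_1, \dots, g_r \in \pi_0(A)$. Moreover $i$ being an isomorphism over $U$ means $\cI$ is supported on $Z$, i.e.\ $\cI \in \QCoh(X \on Z)^{\cn}$. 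By Lemma~\ref{lem;QCohZXcn}, $\QCoh(X \on Z)^{\cn}$ is compactly generated, so we can write $\cI$ as a filtered colimit $\cI = \colim_\alpha \cP_\alpha$ of perfect connective $\cO_X$-modules $\cP_\alpha$ supported on $Z$. Lifting the $g_i$ to $\pi_0$ of some stage, we may arrange that the map $\cP_\alpha \to \cI$ is surjective on $\pi_0$. Now set $\cF := \cP_\alpha$ for such an $\alpha$ and define $Y' := \Spec\bigl(A \mmod (\cF \to A)\bigr)$, i.e.\ the derived quotient of $A$ by the composite $\cF \to \cI \to A$; equivalently $Y'$ is the derived vanishing locus $\Spec\bigl(\LSym^*_A(\cF[1]) \otimes_{\LSym^*_A(\cF)} A\bigr)$ via the Koszul-type construction. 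Since $\cF$ is perfect and connective, this $Y' \to X$ is a closed immersion locally of finite presentation, and because $\cF$ is supported on $Z$ it is an isomorphism over $U$. The map $\cF \to A$ factors through $\cI$, inducing a map $B \to B' := A \mmod (\cF \to A)$ over $A$; wait, one must be careful with the direction — the derived quotient by the smaller ideal maps \emph{to} the quotient by the larger one, so we get $Y \to Y'$. On $\pi_0$ this map is $\pi_0(A)/(g_1, \dots, g_r) \to \pi_0(A)/(\text{image of }\pi_0 \cF)$, which is an isomorphism by the surjectivity arranged above. So (1) and (2) hold in the affine case.

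Next I would globalize by descent. Cover $X$ by finitely many affine opens $X_j = \Spec(A_j)$. On each overlap, one needs the local construction to be compatible, which is not automatic since a priori the choice of $\cF$ depends on the chart. The cleaner route is to do the construction globally from the start: by Lemma~\ref{lem;QCohZXcn} applied to $X$ itself, $\QCoh(X \on Z)^{\cn}$ is compactly generated, so the fiber $\cI := \fib(\cO_X \to i_* \cO_Y) \in \QCoh(X \on Z)^{\cn}$ can be written as a filtered colimit of perfect connective sheaves $\cP_\alpha$ supported on $Z$. The obstruction to choosing a single $\cP_\alpha$ surjecting onto $\pi_0 \cI$ is that $\pi_0 \cI$, while quasi-coherent of finite type on each chart, is a finite-type quasi-coherent ideal sheaf on the qcqs scheme $\cla X$; such a sheaf is a quotient of a perfect one (even a vector bundle, Zariski-locally a free one, globally by a standard argument on qcqs schemes). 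Then I pick $\cF := \cP_\alpha$ for $\alpha$ large enough that $\pi_0 \cF \twoheadrightarrow \pi_0 \cI$, form the global derived quotient $\cO_{Y'} := \cO_X \mmod (\cF \to \cO_X)$, set $Y' := \Spec(\cO_{Y'})$, and obtain the factorization $Y \to Y' \to X$ exactly as in the affine case. Properties (1) and (2) can be checked on affine opens, where they reduce to the computation above.

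The main obstacle is the globalization step: ensuring that the perfect connective sheaf $\cF$ can be chosen globally so that it surjects onto $\pi_0$ of the fiber $\cI$, and that the derived quotient construction is well-behaved (in particular finitely presented and an isomorphism over $U$). The first half hinges on the compact generation statement of Lemma~\ref{lem;QCohZXcn} — this is precisely why that lemma is proved just beforehand — together with the elementary fact that a finite-type quasi-coherent sheaf on a qcqs scheme is a quotient of a perfect one. The second half is where one uses that $\LSym^*_{\cO_X}$ sends compact objects to compact objects (noted in Subsection~\ref{subs:free-algebras}), so that $Y' = \Spec(\LSym^*_{\cO_X}(\cF[1]) \otimes_{\LSym^*_{\cO_X}(\cF)} \cO_X) \to X$ is locally of finite presentation; and the support condition on $\cF$ forces $\cO_{Y'}|_U = \cO_X|_U$, giving the isomorphism over $U$. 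Everything else is a routine check on affine charts. It is worth noting that no properness or lafp hypothesis on $f$ is needed here — this lemma is purely about replacing a finitely-presented-on-$\pi_0$ closed immersion by a genuinely finitely presented derived closed immersion with the same classical truncation.
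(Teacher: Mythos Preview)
Your approach is the paper's: work globally from the start (your second paragraph), write $\cI = \fib(\cO_X \to i_*\cO_Y) \in \QCoh(X\on Z)^{\cn}$ as a filtered colimit of perfect connective sheaves supported on $Z$ via Lemma~\ref{lem;QCohZXcn}, pick one $\cF = \cI_\alpha$ whose $\pi_0$ surjects onto the finite-type ideal $\cJ := \im(\pi_0\cI \to \pi_0\cO_X)$, and set $Y' = \Spec(\cO_X \otimes_{\LSym^*_{\cO_X}(\cF)} \cO_X)$; the affine detour in your first paragraph is unnecessary.

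Three small corrections. First, your formula $\LSym^*_A(\cF[1]) \otimes_{\LSym^*_A(\cF)} A$ for the derived quotient is not right; the correct pushout is $A \otimes_{\LSym^*_A(\cF)} A$ with the two augmentations coming from $\cF \to 0$ and $\cF \to A$ (this is what the paper writes). Second, you cannot in general arrange $\pi_0(\cP_\alpha) \twoheadrightarrow \pi_0(\cI)$: the long exact sequence only gives that $\cJ = \im(\pi_0\cI \to \pi_0\cO_X)$ is of finite type, while $\pi_0\cI$ itself receives a possibly non-finite contribution from $\pi_1(i_*\cO_Y)$; but surjectivity onto $\cJ$ is all that is needed, and it is what your $\pi_0$-computation at the end actually uses. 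Third, your parenthetical that a finite-type quasi-coherent sheaf on a qcqs scheme is a quotient of a vector bundle is false in general (no ample line bundle is assumed here), but you do not use it --- the argument runs entirely through compact generation of $\QCoh(X\on Z)^{\cn}$ and the finite-type property of $\cJ$.
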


\begin{proof}
Let $\cI = \fib (\cO_{X} \to i_{*}\cO_{Y})$. As $i$ is a closed immersion and an isomorphism over $U$, we have $\cI \in \QCoh(X \on Z)^{\cn}$. As the morphism of classical schemes $\cla Y \to \cla X$ is classically of finite presentation, it follows that the image $\cJ$ of $\pi_{0}(\cI)$ in $\pi_{0}(\cO_{X})$ is of finite type. By Lemma~\ref{lem;QCohZXcn}, we can write $\cI$ as a filtered colimit $\cI = \colim_{\alpha} \cI_{\alpha}$ where each $I_{\alpha} \in \Perf(X \on Z)^{\cn}$. 
As $\pi_{0}(-)$ commutes with filtered colimits, we have $\colim_{\alpha} \pi_{0}(\cI_{\alpha}) = \pi_{0}(\cI)$. As $\pi_{0}(\cI) \to \cJ$ is surjective and $\cJ$ is of finite type, there exists an index $\alpha$ such that the induced map $\pi_{0}(\cI_{\alpha}) \to \cJ$ is surjective.
By construction, we have the following commutative diagram in $\QCoh(X)^{\cn}$.
\[
\begin{tikzcd}
 \cI_{\alpha} \ar[d]\ar[r] & 0 \ar[d] \\ 
 \cO_{X} \ar[r] & i_{*}\cO_{Y} 
\end{tikzcd}
\]
By adjunction, this induces a commutative diagram
\[
\begin{tikzcd}
 \LSym^{*}_{\cO_{X}}(\cI_{\alpha}) \ar[d]\ar[r] & \cO_{X} \ar[d] \\ 
 \cO_{X} \ar[r] & i_{*}\cO_{Y} 
\end{tikzcd}
\]
in $\CAlg^{\Delta}_{\cO_{X}}$. We define $\cA \in \CAlg^{\Delta}_{\cO_{X}}$ to be the tensor product
$\cO_{X} \otimes_{\LSym^{*}_{\cO_{X}}(\cI_{\alpha})} \cO_{X}$. The above diagram classifies a morphism $\cA \to i_{*}\cO_{Y}$ in $\CAlg^{\Delta}_{\cO_{X}}$. 
We claim that it induces an isomorphism on $\pi_{0}$. Indeed, we compute\footnote{here the $\otimes^{\heartsuit}$ indicates the underived tensor product and $\Sym$ denotes the classical symmetric algebra}
\begin{align*}
\pi_{0}(\cA) &\cong \pi_{0}(\cO_{X}) \otimes^{\heartsuit}_{\pi_{0}(\LSym^{*}_{\cO_{X}}(\cI_{\alpha}))} \pi_{0}(\cO_{X}) \\
&\cong \pi_{0}(\cO_{X}) \otimes^{\heartsuit}_{\Sym_{\pi_{0}(\cO_{X})}(\pi_{0}(\cI_{\alpha}))} \pi_{0}(\cO_{X}) \\
&\cong \pi_{0}(\cO_{X}) / \im(\pi_{0}(\cI_{\alpha}) \to \pi_{0}(\cO_{X})) \\
&= \pi_{0}(\cO_{X}) /\cJ \\
&\cong \pi_{0}(i_{*}\cO_{Y}).
\end{align*}
We set $Y' = \Spec(\cA)$. By construction, we get the factorization $Y \to Y' \to X$ of $i$. The above computation shows that $Y \to Y'$ is an isomorphism on underlying classical schemes. Moreover, as $I_{\alpha}$ is supported on $Z$, $Y' \to X$ is an isomorphism over $U$. Finally, as $I_{\alpha}$ is perfect, it follows that $Y' \to X$ is locally of finite presentation, as desired.
\end{proof}

\subsection{Formal completion}
	\label{subs:formal-completion}

Let $X$ be a qcqs derived scheme and let $Z \subseteq |X|$ be  a closed subset whose open complement $|X| \setminus Z$ is  quasi-compact. The \emph{formal completion} $X_{Z}^{\wedge}$ is an ind-object of derived schemes with a map $i\colon X_{Z}^{\wedge} \to X$. It is determined by the following universal property: For any derived ring $R$,  composition with $i$ induces an equivalence of $\Map_{\Ind(\dSch)}(\Spec(R), X_{Z}^{\wedge})$ with the union of components of $\Map_{\dSch}(\Spec(R), X)$ consisting of those morphisms $\Spec(R) \to X$ that  set-theoretically factor through $Z$; see \cite[Prop.~6.5.5]{MR3220628} for the existence of $X_{Z}^{\wedge}$ as an ind-derived scheme. 
More concretely, we can write $X_{Z}^{\wedge}$ as the ind-system 
\begin{equation}
	\label{eq:completion}
X_{Z}^{\wedge} = \{ Z' \}_{Z' \hookrightarrow X}
\end{equation}
of (a small cofinal subsystem of) all closed immersions of derived schemes $Z' \hookrightarrow X$ with $|Z'| = Z$.

If $X=\Spec(A)$ is affine, there exist finitely many elements $f_{1}, \dots, f_{r} \in \pi_{0}(A)$ whose zero locus is $Z$. In this case, $X_{Z}^{\wedge}$ can also be represented by
\begin{equation}
	\label{eq:completion-affine}
X_{Z}^{\wedge} = \{ \Spec(A\mmod f_{1}^{\alpha}, \dots, f_{r}^{\alpha} ) \}_{\alpha\geq 1} 
\end{equation}
where the symbol $\mmod$ indicates the derived quotient, i.e.~the (derived) tensor product $A \otimes_{\Z[t_{1}, \dots, t_{r}]} \Z$ where the maps send $t_{i}$ to $f_{i}^{\alpha}$ and $0$, respectively; see \cite[Prop.~6.1.1]{LurieThesis} or \cite[Lemma~8.1.2.2]{SAG}.

It follows immediately from the universal property that the formation of the derived completion commutes with base change: If $f\colon Y \to X$ is a quasi-compact map, then $Y^{\wedge}_{f^{-1}(Z)} \simeq X_{Z}^{\wedge} \times_{X} Y$. We therefore also write $Y_{Z}^{\wedge}$ instead of $Y_{f^{-1}(Z)}^{\wedge}$.

If $X$ is a Noetherian classical scheme, then the formal completion is itself classical, equal to the classical formal completion.  This follows for example from \cite[Lemma~17.3.5.7]{SAG}.
 
\subsection{Ample line bundles}
	\label{subs:ample}
	
Ample line bundles on Noetherian derived schemes have been studied by Annala \cite{AnnalaBase}. We need some of the results in the more general setting of qcqs derived schemes. These are certainly well-known, the proofs are essentially the same as for classical schemes.

Let $X$ be a qcqs derived scheme. A line bundle $\cL$ on $X$ is called \emph{ample} if for any point $x \in X$ there exists an $n \geq 1$ and a global section $s \in \pi_{0}\Gamma(X, \cL^{\otimes n})$ such that the non-vanishing locus $X_{s}$ of $s$ is affine and contains $x$. If $f \colon X \to Y$ is a morphism of derived schemes, then $\cL$ is called \emph{$f$-ample} if for every affine open subscheme $U \subseteq Y$ the restriction of $\cL$ to $f^{-1}(U)$ is ample. 

Let now $\cL$ be any line bundle on $X$ and  $s \in \pi_{0}\Gamma(X,\cL)$ a global section. We view the latter as a map $s\colon \cO_{X} \to \cL$. If $\cF$ is any quasi-coherent sheaf on $X$, we get a diagram
\begin{equation}
	\label{eq:line-bundle-section}
	\cF \xrightarrow{\otimes s} \cF \otimes \cL \xrightarrow{\otimes s} \cF \otimes \cL^{\otimes 2} \xrightarrow{\phantom{\otimes s}} \dots
\end{equation}
In the case of classical schemes, the following lemma is standard. For Noetherian derived schemes, it is \cite[Lemma~2.6]{AnnalaBase}.

\begin{lemma}\label{lemma:section-standard-open}
In the above situation, there is a canonical equivalence
\[
\colim_{n} \Gamma(X, \cF \otimes \cL^{\otimes n}) \simeq \Gamma(X_{s}, \cF).
\]
\end{lemma}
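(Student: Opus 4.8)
The plan is to reduce to the affine case and then identify both sides with an explicit colimit of sections. First I would observe that the statement is local on $X$ in the following sense: $X_s$ is a quasi-compact open subscheme of $X$, so if $\{U_j\}$ is a finite affine open cover of $X$, then $\{U_j \cap X_s\} = \{(U_j)_{s|_{U_j}}\}$ is a cover of $X_s$. Both $\Gamma(X, -)$ and $\Gamma(X_s, -)$ can be computed via the Čech complex for these covers, and filtered colimits are exact and commute with the (finite) totalizations involved since $X$ is qcqs; a quasi-coherent sheaf has vanishing higher cohomology only on affines, but the Čech-to-derived-functor spectral sequence for a qcqs derived scheme still lets one assemble $\Gamma(X, \cF \otimes \cL^{\otimes n})$ from the affine pieces $\Gamma(U_{j_0} \cap \dots \cap U_{j_p}, \cF \otimes \cL^{\otimes n})$, which are again intersections of affines in a qcqs scheme, hence qcqs, so one can iterate. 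Thus it suffices to prove the claim when $X = \Spec(A)$ is affine; the naturality of the map in \eqref{eq:line-bundle-section} ensures the local identifications glue.

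In the affine case, a line bundle $\cL$ is a rank-one projective $A$-module, the section $s$ corresponds to an element of $\pi_0(\cL)$, and after choosing a trivialization over a suitable cover one reduces further — or one argues directly — to the case $\cL = \cO_X$ and $s = f \in \pi_0(A)$. Then $X_s = \Spec(A[1/f])$, and the diagram \eqref{eq:line-bundle-section} becomes the sequential diagram of $A$-modules
\[
M \xrightarrow{f} M \xrightarrow{f} M \xrightarrow{f} \cdots
\]
where $M = \Gamma(X, \cF)$ is the $A$-module corresponding to $\cF$. The colimit of this diagram is exactly $M[1/f] = M \otimes_A A[1/f]$, which is $\Gamma(X_s, \cF)$ since $\cF|_{X_s}$ corresponds to the $A[1/f]$-module $M[1/f]$. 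For general $\cL$ one twists: each transition map $\cF \otimes \cL^{\otimes n} \to \cF \otimes \cL^{\otimes (n+1)}$ is multiplication by $s$, and after inverting $s$ the bundles $\cL^{\otimes n}|_{X_s}$ become canonically trivialized by $s^{\otimes n}$, identifying the colimit with $\Gamma(X_s, \cF)$; the canonicity of all these identifications is what makes the equivalence canonical and hence gluable.

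The main obstacle is the globalization step: unlike for classical schemes where one invokes vanishing of higher quasi-coherent cohomology on affines and a two-step Čech argument, here $\cF$ is a complex (object of $\QCoh(X)$, not necessarily connective or coherent), so one genuinely needs the descent of $\QCoh$ along the affine cover — i.e. that $\Gamma(X, -)$ is computed by the limit of the Čech cosimplicial diagram of the affine charts — together with the fact that this totalization commutes with the filtered colimit over $n$. The commutation is where qcqs-ness is used: the Čech diagram has bounded cohomological amplitude because $X$ is qcqs, so the limit is a finite limit up to connectivity in each degree, and finite limits commute with filtered colimits. This is exactly the mechanism already used in the proof of Lemma~\ref{lemma:section-standard-open}'s Noetherian analogue in \cite{AnnalaBase}, and the same bookkeeping works here; I would cite the relevant descent statement for $\QCoh$ of qcqs derived schemes from \cite{SAG} and the compatibility of $\Gamma$ with the chosen cover, then reduce to the affine computation above.
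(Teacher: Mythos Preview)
Your proposal is correct and follows essentially the same route as the paper: construct the comparison map, reduce to the affine case with trivial $\cL$ using that $X$ is qcqs, and identify the colimit with $M[s^{-1}]$. The only difference is cosmetic: the paper invokes the ``standard induction'' (Mayer--Vietoris on the number of affines in a cover, giving pullback squares at each step), whereas you spell out the full \v{C}ech descent and argue that the totalization commutes with the filtered colimit; the inductive version is slightly cleaner since it only ever involves genuine finite limits, but the content is the same.
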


\begin{proof}
If we take sections over $X_{s}$ in \eqref{eq:line-bundle-section}, then all maps become equivalences.
Thus the restriction maps induce a map 
\[
\colim_{n} \Gamma(X,  \cF \otimes \cL^{\otimes n}) \to \colim_{n} \Gamma(X_{s}, \cF \otimes \cL^{\otimes n}) \simeq \Gamma(X_{s}, \cF).
\]
We claim that this map is an equivalence. As $X$ is qcqs, a standard induction reduces us to the case where $X=\Spec(A)$ is affine and $\cL$ is trivial. So we may assume $X = \Spec(A)$, $\Gamma(X, \cL) = A$, and $\cF$ corresponds to the $A$-module $M=\Gamma(X, \cF)$. In this case, the colimit in question identifies with $M[s^{-1}]$ which also identifies with $\Gamma(X_{s}, \cF)$ as $\cF$ is quasi-coherent.
\end{proof}

Exactly as in \cite[Lemma~2.11]{AnnalaBase}, this can be used to prove the following lemma.

\begin{lemma}\label{lemma:rel-aample-local-on-base}
Let $f\colon X \to Y$ be a morphism of qcqs derived schemes, $\cL$ a line bundle on $X$, and $(U_{i})_{i\in I}$ an open covering of $Y$.  Write $f_{i}$ for the restricted morphism $f^{-1}(U_{i}) \to U_{i}$. Then the following are equivalent:
\begin{enumerate}
\item $\cL$ is $f$-ample;
\item for every $i\in I$, the restriction $\cL|_{f^{-1}(U_{i})}$ of $\cL$ is $f_{i}$-ample. \hfill $\square$
\end{enumerate}
\end{lemma}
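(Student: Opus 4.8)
The plan is to prove the two implications separately. The implication $(1)\Rightarrow(2)$ is formal: if $\cL$ is $f$-ample and $W \subseteq U_{i}$ is an affine open, then $W$ is also an affine open of $Y$ and $f_{i}^{-1}(W) = f^{-1}(W)$, so $\cL|_{f^{-1}(W)}$ is ample by the definition of $f$-ampleness; hence $\cL|_{f^{-1}(U_{i})}$ is $f_{i}$-ample. The substance is $(2)\Rightarrow(1)$, which I would prove by first reducing to a statement over an affine base and then invoking Lemma~\ref{lemma:section-standard-open} to "clear denominators."

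For the reduction, fix an affine open $V = \Spec(A) \subseteq Y$; by the definition of $f$-ampleness it suffices to show that $\cL|_{f^{-1}(V)}$ is ample. The opens $U_{i} \cap V$ cover $V$, and each can be covered by distinguished opens $D(g) \subseteq U_{i}$ of $\Spec(A)$. Since $V$ is quasi-compact, finitely many of these, say $D(g_{1}), \dots, D(g_{m})$ with $D(g_{k}) \subseteq U_{i(k)}$, already cover $V$. As $D(g_{k})$ is an affine open of $U_{i(k)}$ and $\cL|_{f^{-1}(U_{i(k)})}$ is $f_{i(k)}$-ample, the restriction of $\cL$ to $f^{-1}(D(g_{k}))$ is ample. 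Writing $B = f^{-1}(V)$ and $h_{k} \in \pi_{0}\Gamma(B, \cO_{B})$ for the pullback of $g_{k}$, we have $f^{-1}(D(g_{k})) = B_{h_{k}}$, and the $B_{h_{k}}$ cover $B$ with $\cL|_{B_{h_{k}}}$ ample for each $k$; it remains to show $\cL|_{B}$ is ample.

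For the key step, let $b \in B$ and pick $k$ with $b \in B_{h_{k}}$. By ampleness of $\cL|_{B_{h_{k}}}$ choose $n \geq 1$ and $t \in \pi_{0}\Gamma(B_{h_{k}}, \cL^{\otimes n})$ with $(B_{h_{k}})_{t}$ affine and containing $b$. Applying Lemma~\ref{lemma:section-standard-open} to the qcqs derived scheme $B$, the trivial line bundle $\cO_{B}$ with section $h_{k}$ (whose non-vanishing locus is $B_{h_{k}}$), and the quasi-coherent sheaf $\cF = \cL^{\otimes n}$ yields, after passing to $\pi_{0}$, an identification $\pi_{0}\Gamma(B_{h_{k}}, \cL^{\otimes n}) \cong \colim\bigl(\pi_{0}\Gamma(B, \cL^{\otimes n}) \xrightarrow{\cdot h_{k}} \pi_{0}\Gamma(B, \cL^{\otimes n}) \xrightarrow{\cdot h_{k}} \cdots\bigr)$. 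Hence there are $N \geq 0$ and a global section $\tilde s \in \pi_{0}\Gamma(B, \cL^{\otimes n})$ with $\tilde s|_{B_{h_{k}}} = h_{k}^{N} \cdot t$. Setting $s := h_{k} \cdot \tilde s \in \pi_{0}\Gamma(B, \cL^{\otimes n})$, we get $B_{s} = B_{h_{k}} \cap B_{\tilde s}$, and since $h_{k}$ restricts to a unit on $B_{h_{k}}$ this intersection equals $(B_{h_{k}})_{h_{k}^{N} t} = (B_{h_{k}})_{t}$, which is affine and contains $b$. As $b$ was arbitrary, $\cL|_{B}$ is ample, and since $V$ was arbitrary, $\cL$ is $f$-ample.

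The only delicate point — and the reason for the extra factor $h_{k}$ — is that a naive global lift $\tilde s$ of $t$ may have non-vanishing locus strictly larger than the affine set $(B_{h_{k}})_{t}$ and need not itself be affine; multiplying by $h_{k}$ cuts it back down into $B_{h_{k}}$ without changing the non-vanishing locus there. Everything else is bookkeeping, with quasi-compactness of affine opens of $Y$ used only to pass to a finite subcover.
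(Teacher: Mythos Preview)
Your proof is correct and follows exactly the approach the paper indicates: the paper does not spell out a proof but says ``Exactly as in \cite[Lemma~2.11]{AnnalaBase}, this can be used to prove the following lemma,'' where ``this'' is Lemma~\ref{lemma:section-standard-open}, and your argument is precisely that standard lifting-of-sections argument. The only point you use without comment is that $B = f^{-1}(V)$ is qcqs, which holds because $V \hookrightarrow Y$ is a quasi-compact open immersion (as $Y$ is quasi-separated), so its pullback $B \hookrightarrow X$ is quasi-compact and $X$ is qcqs.
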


The existence of an ample line bundle implies the resolution property in the following form:

\begin{lemma}
	\label{lem:resolution-property}
Let $X$ be a qcqs derived scheme which carries an ample line bundle, and let  $\cF$ be a connective, quasi-coherent $\cO_{X}$-module such that $\pi_{0}\cF$ is of finite type as a $\pi_{0}\cO_{X}$-module. Then there exists a vector bundle, i.e.~a locally free $\cO_{X}$-module of finite rank, $\cE$ on $X$ together with a map $\cE \twoheadrightarrow \cF$ which is surjective on $\pi_{0}$.
\end{lemma}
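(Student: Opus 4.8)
The plan is to mimic the classical argument (cf.\ \cite[Tag~01PS]{stacks} or the proof for classical schemes), reducing everything to the colimit computation of Lemma~\ref{lemma:section-standard-open}. First I would fix an ample line bundle $\cL$ on $X$. Since $\pi_0\cF$ is of finite type as a $\pi_0\cO_X$-module and $X$ is quasi-compact, the support considerations let me choose a finite affine open cover $X = \bigcup_{j=1}^m X_{s_j}$, where each $s_j \in \pi_0\Gamma(X, \cL^{\otimes n_j})$ for suitable $n_j \geq 1$ (using the definition of ampleness). On each affine $U_j := X_{s_j} = \Spec(A_j)$, the module $\pi_0(\cF)|_{U_j}$ is a finitely generated $\pi_0(A_j)$-module; lift a finite set of generators to elements of $\pi_0\Gamma(U_j, \cF)$, i.e.\ to maps $\cO_{U_j} \to \cF|_{U_j}$, which jointly induce a map $\cO_{U_j}^{\oplus r_j} \to \cF|_{U_j}$ that is surjective on $\pi_0$.

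The key step is to spread these local sections out to global twisted sections. By Lemma~\ref{lemma:section-standard-open} applied to $\cL^{\otimes n_j}$ (note $X_{s_j} = X$ for that section when viewed appropriately — more precisely, apply the lemma with the section $s_j$ of $\cL^{\otimes n_j}$), we have $\colim_k \Gamma(X, \cF \otimes \cL^{\otimes (n_j + k)}) \simeq \Gamma(X_{s_j}, \cF)$, so after replacing each local generating section by $s_j^N \cdot (-)$ for $N$ large — uniformly in $j$, which is possible since there are finitely many $j$ — each lifts to a global section of $\cF \otimes \cL^{\otimes M}$ for a common $M$. Concretely: for $N$ large enough, $s_j^N$ times the $i$-th chosen generator of $\cF|_{U_j}$ extends to $t_{j,i} \in \pi_0\Gamma(X, \cF \otimes \cL^{\otimes M})$ with $M = N\cdot(\text{something}) \cdot n_j$ arranged to agree; then $t_{j,i}\colon \cO_X \to \cF \otimes \cL^{\otimes M}$, equivalently $\cL^{\otimes -M} \to \cF$, and the totality of these over all $j, i$ gives a map $\cE' := \bigoplus_{j,i} \cL^{\otimes -M} \to \cF$. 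This map is surjective on $\pi_0$: surjectivity can be checked on the affine cover $\{X_{s_j}\}$, and on $X_{s_j}$ the section $s_j$ becomes a unit, so $t_{j,i}$ restricts (up to the unit $s_j^N$) to the original $i$-th generator of $\cF|_{U_j}$, whence $\cE'|_{U_j} \to \cF|_{U_j}$ already hits the generators. Taking $\cE = \cE'$ (a vector bundle of finite rank, being a finite direct sum of line bundles) finishes the proof.

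The main obstacle I anticipate is purely bookkeeping: arranging a \emph{single} common twist $M$ that works simultaneously for all the finitely many affine opens $X_{s_j}$ and all chosen generators on each, and making sure that surjectivity on $\pi_0$ on each $X_{s_j}$ is genuinely implied by the colimit identification of Lemma~\ref{lemma:section-standard-open} — i.e.\ that an element of $\pi_0\Gamma(X_{s_j}, \cF)$ actually arises from some $\pi_0\Gamma(X, \cF\otimes\cL^{\otimes k})$ after multiplying by a power of $s_j$, and that this behaves well for the finitely generated module $\pi_0\cF|_{U_j}$. Since $\Gamma(X_{s_j}, \cF) = \pi_0(\cF)(X_{s_j})$ is computed on $\pi_0$ because $\cF$ is connective and $X_{s_j}$ is affine, and the colimit in Lemma~\ref{lemma:section-standard-open} is a colimit of connective objects hence computes $\pi_0$ as the colimit of $\pi_0$'s, this goes through; but it is the one place where one must be slightly careful about the interaction of the derived statement with the classical finite-generation hypothesis on $\pi_0\cF$. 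Everything else is formal and identical to the classical case.
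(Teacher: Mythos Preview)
Your proposal is correct and follows essentially the same route as the paper's proof: cover $X$ by finitely many affine non-vanishing loci $X_{s_j}$, pick finite generators of $\pi_0\cF$ on each, use Lemma~\ref{lemma:section-standard-open} to extend them to global twisted sections, and bundle these into a map from a finite direct sum of negative powers of $\cL$. The paper avoids the bookkeeping you flag as the ``main obstacle'' by first replacing $\cL$ with a suitable tensor power so that all the $s_j$ are sections of $\cL$ itself (rather than of various $\cL^{\otimes n_j}$), after which a single common exponent $N$ works for all generators; alternatively, note that you do not actually need a common twist, since $\cE = \bigoplus_{j,i} \cL^{\otimes -M_{j,i}}$ with varying exponents is still a vector bundle.
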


\begin{proof}
Choose an ample line bundle $\cL$. Replacing $\cL$ by an appropriate tensor power, we may assume that there exist finitely many global sections $s_{i} \in \pi_{0}\Gamma(X, \cL)$, whose non-vanishing loci $X_{s_{i}}$ form an affine covering of $X$. As each $X_{s_{i}}$ is affine and $\cF$ is quasi-coherent, we have isomorphisms $\pi_{0}\Gamma(X_{s_{i}}, \cF) = \pi_{0}\Gamma(X_{s_{i}}, \pi_{0}\cF)$ and similarly for $\cO_{X}$ in place of $\cF$. The assumption that  $\pi_{0}\cF$ is a $\pi_{0}\cO_{X}$-module of finite type then implies that each $\pi_{0}\Gamma(X_{s_{i}}, \cF)$ is a finitely generated $\pi_{0}\Gamma(X_{s_{i}}, \cO_{X})$-module. Choose finitely many generators $m_{ij} \in \pi_{0}\Gamma(X_{s_{i}}, \cF)$. By Lemma~\ref{lemma:section-standard-open} there exists an integer $N$ such that all the $m_{ij}$ extend to global sections of $\cF \otimes \cL^{\otimes N}$. These give rise to a map $\cE := \bigoplus_{ij} \cL^{\otimes (-N)} \to \cF$ which is surjective on $\pi_{0}$ by construction.
\end{proof}

\subsection{Quasi-smooth closed immersions and derived blowups}
	\label{subs:derived-blowups}

Derived blowups were first introduced in \cite{KST} for affine schemes in order to prove pro-cdh descent of algebraic $K$-theory on Noetherian schemes. They were then systematically studied and developed much further by Khan, Rydh, and Hekking \cite{KhanRydh,Hekking:2021aa}.

Let $X$ be a derived scheme, and let $Z \hookrightarrow X$ be a \emph{quasi-smooth closed immersion} \cite[2.3.6]{KhanRydh}, i.e., Zariski locally on $X$, $Z \hookrightarrow X$ is the derived pullback of the map $\{0\} \hookrightarrow \A^{r}_{\Z}$ for some $r$ and some morphism $X \to \A^{r}_{\Z}$. Equivalently, Zariski locally on $X$, $Z \hookrightarrow X$ is given by $\Spec(A\mmod f_{1}, \dots, f_{r}) \hookrightarrow \Spec(A)$ for suitable elements $f_{i} \in \pi_{0}(A)$. The number $r$ is called the \emph{virtual codimension} of the closed immersion.
Then one can form the \emph{derived blowup} $p\colon \dBl_{Z}(X) \to X$ \emph{of} $X$ \emph{in} $Z$ (or \emph{with center} $Z$) which is characterized by a universal property: it classifies virtual Cartier divisors on derived $X$-schemes; see \cite[\S 4.1]{KhanRydh} for details. The construction of the derived blowup commutes with arbitrary base change. Locally, if $Z$ is the derived pullback of $\{0\} \hookrightarrow \A^{r}_{\Z}$ along a map $X \to \A^{r}_{\Z}$, then $\dBl_{Z}(X)$ is the derived pullback of the classical blowup $\Bl_{\{0\}}(\A^{r}_{\Z}) \to \A^{r}_{\Z}$. By \cite[Thm.~4.1.5(v)]{KhanRydh}, the morphism $p\colon \dBl_{Z}(X) \to X$ is quasi-smooth, i.e.~Zariski locally factors as a quasi-smooth closed immersion followed by a smooth morphism, and in particular is locally of finite presentation.
Clearly, $p$ is an isomorphism outside $Z$ and proper. 

It follows from the description of the underlying classical scheme of $\dBl_{Z}(X)$ in \cite[Thm.~4.1.5(vii)]{KhanRydh} that there is always a closed immersion $\Bl_{\cla Z}(\cla X) \hookrightarrow \cla\dBl_{Z}(X)$ and this is an isomorphism over $\cla U$ where $U$ is the open complement of $Z$.

The derived blowup $\dBl_{Z}(X)$ carries a canonical line bundle $\cO(1)$  (the ideal sheaf defining the universal virtual Cartier divisor) and this line bundle is $p$-ample.
 Indeed, by Lemma~\ref{lemma:rel-aample-local-on-base} we may work affine-locally on $X$ and hence assume that $p\colon \dBl_{Z}(X) \to X$ is the pullback of the classical blowup $\Bl_{\{0\}}(\A^{n}_{\Z}) \to \A^{n}_{\Z}$, and the line bundle $\cO(1)$ is the pullback of the classical canonical line bundle on $\Bl_{\{0\}}(\A^{n})$ which is ample. As being relatively ample is stable under base change (the proof in \cite[Prop.~2.12]{AnnalaBase} works in general), it follows that $\cO(1)$ is $p$-ample. 

In presence of an ample line bundle, every closed subset with quasi-compact complement is the support of a quasi-smooth closed subscheme. More precisely, we have the following lemma.

\begin{lemma}\label{lem:existence-quasi-smooth-closed-subscheme-structure}
Let $X$ be a qcqs derived scheme which carries an ample line bundle. Let $Z_{0} \hookrightarrow \cla X$ be a classically finitely presented closed subscheme. Then there exists a quasi-smooth closed subscheme $Z \hookrightarrow X$ whose classical truncation is $Z_{0}$.
\end{lemma}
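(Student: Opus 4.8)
The plan is to reduce to the affine case and then build the quasi-smooth structure by a resolution-and-lift argument. First I would use that $Z_0 \hookrightarrow \cla X$ is classically finitely presented: locally it is cut out by finitely many equations, and since $X$ carries an ample line bundle, we can hope to spread these out to global data. More precisely, let $\cI_0 \subseteq \cO_{\cla X}$ be the quasi-coherent ideal sheaf of $Z_0$; it is of finite type. By Lemma~\ref{lem:resolution-property} applied to the connective $\cO_X$-module obtained by viewing $\cI_0$ as an $\cO_X$-module (it is discrete, hence connective, with $\pi_0$ of finite type), there is a vector bundle $\cE$ on $X$ and a map $\cE \to \cI_0 \hookrightarrow \cO_X$ which is surjective onto $\cI_0$ on $\pi_0$. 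Twisting $\cE$ down by a large power of the ample line bundle if necessary, or rather taking $\cE$ to already be a sum of negative powers of $\cL$ as in the proof of Lemma~\ref{lem:resolution-property}, we may even arrange $\cE = \cL^{\otimes(-N)}{}^{\oplus m}$; in any case the sections of $\cE^\vee$ give us finitely many global sections $s_1, \dots, s_m \in \pi_0\Gamma(X, \cL^{\otimes N})$ whose images generate $\cI_0$.

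Next I would use these sections to define the quasi-smooth closed subscheme. The map $\cE \to \cO_X$ is adjoint to a map $\cO_X \to \cE^\vee$, i.e.\ a section of $\cE^\vee$, and one forms the derived zero locus $Z := \Spec(\cO_X \otimes_{\LSym^*_{\cO_X}(\cE)} \cO_X)$, where the two maps $\LSym^*_{\cO_X}(\cE) \to \cO_X$ are induced respectively by $\cE \to \cO_X$ and by the zero map $\cE \to 0$. Since $\cE$ is locally free of finite rank, Zariski-locally $\cE \cong \cO_X^{\oplus r}$ and this construction is exactly the derived vanishing locus of $r$ functions, i.e.\ the derived pullback of $\{0\} \hookrightarrow \A^r_\Z$; hence $Z \hookrightarrow X$ is a quasi-smooth closed immersion in the sense of \cite[2.3.6]{KhanRydh}. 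It remains to identify the classical truncation. By the same computation as in the proof of Lemma~\ref{lem:closed-subscheme}, $\pi_0$ of the structure sheaf of $Z$ is $\pi_0(\cO_X)/\mathrm{im}(\pi_0\cE \to \pi_0\cO_X)$, and by construction this image is exactly $\cI_0$. Hence $\cla Z = Z_0$, as desired.

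The main obstacle, and the step requiring the most care, is the passage from the \emph{classical} finitely-presented ideal $\cI_0$ to a \emph{global finite locally free resolution in degree one}: one needs the ample line bundle not merely to know $Z_0$ is cut out locally by finitely many equations but to guarantee that these can be organized into a single globally defined map $\cE \to \cO_X$ from a vector bundle that is surjective onto $\cI_0$ on $\pi_0$. This is precisely what Lemma~\ref{lem:resolution-property} delivers (it is the only place ampleness is used in an essential way), so once that lemma is invoked the rest is formal: the local model identification making $Z \hookrightarrow X$ quasi-smooth, and the $\pi_0$-computation identifying the truncation, are both routine and already appear in the proof of Lemma~\ref{lem:closed-subscheme}. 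One small point to be mindful of is that $\cE \to \cO_X$ need not be injective or have any good properties beyond surjectivity onto $\cI_0$ on $\pi_0$ — but that is all that is needed, since quasi-smoothness is about the \emph{source} being a vector bundle, and the classical truncation only sees the cokernel on $\pi_0$.
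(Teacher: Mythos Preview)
Your approach is essentially identical to the paper's: use Lemma~\ref{lem:resolution-property} to produce a vector bundle $\cE$ and a map to $\cO_X$ whose image on $\pi_0$ is the ideal of $Z_0$, form the derived zero locus $Z = \Spec(\cO_X \otimes_{\LSym^*_{\cO_X}(\cE)} \cO_X)$, note it is quasi-smooth since $\cE$ is locally free, and identify $\cla Z = Z_0$ via the $\pi_0$-computation already carried out in the proof of Lemma~\ref{lem:closed-subscheme}.

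There is one small wrinkle. You apply the resolution property to the discrete $\cO_X$-module $\cI_0$ and then write ``$\cE \to \cI_0 \hookrightarrow \cO_X$'', but when $X$ is not classical there is no natural map $\cI_0 \to \cO_X$ of $\cO_X$-modules: the ideal $\cI_0$ sits inside $\pi_0\cO_X$, and the truncation map $\cO_X \to \pi_0\cO_X$ goes the wrong way (and need not split $\cO_X$-linearly). The paper avoids this by applying Lemma~\ref{lem:resolution-property} instead to $J \coloneqq \fib(\cO_X \to \cO_{Z_0})$, which is connective with $\pi_0 J \cong \cI_0$ and comes equipped with the tautological map $J \to \cO_X$; composing $\cE \twoheadrightarrow J \to \cO_X$ then gives the section you need. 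With this adjustment your argument is the paper's argument verbatim.
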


\begin{proof}
This is very similar to \cite[Constr.~A.2.2]{BachmannCatMilnor}. Let $J = \fib(\cO_{X} \to \cO_{Z_{0}})$. As $\cO_{Z_{0}}$ is a discrete, quasi-coherent sheaf and $\cO_{X} \to \cO_{Z_{0}}$ is surjective on $\pi_{0}$, the sheaf $J$ is connective, quasi-coherent, and $\pi_{0}J$ is the ideal sheaf defining $Z_{0}$ in $\cla X$. As $Z_{0} \hookrightarrow \cla X$ is classically of finite presentation, $\pi_{0}J$ is a $\pi_{0}\cO_{X}$-module of finite type. Hence Lemma~\ref{lem:resolution-property} implies the existence of a vector bundle $\cE$ on $X$ and a map $\cE \to \cJ$ which is surjective on $\pi_{0}$. Let $V(\cE)$ be the geometric vector bundle $V(\cE) := \Spec (\LSym^{*}_{\cO_{X}}(\cE))$ over $X$. 
The composition $\cE \to \cJ \to \cO_{X}$ defines a section $s\colon X \to V(\cE)$, and we form the fibre product
\[
\begin{tikzcd}
 Z \ar[d, hook, "i"]\ar[r] & X \ar[d, hook, "0"] \\ 
 X \ar[r, "s"] & V(\cE). 
\end{tikzcd}
\]
Equivalently, we have
\[
Z = \Spec ( \cO_{X} \otimes_{\LSym^{*}_{\cO_{X}}(\cE)} \cO_{X} ).
\]
By construction, $i \colon Z \hookrightarrow X$ is quasi-smooth. The same computation as in the proof of Lemma~\ref{lem:closed-subscheme} shows that $\cla Z = Z_{0}$, as desired.
\end{proof}

\subsection{Pushouts of derived schemes}
	\label{subs:pushouts}
	
We also need certain pushouts of derived schemes. These have been studied in \cite{GRII}:

\begin{lemma}
\label{lem:nilOpenPushout}
Let $i\colon Y_1 \to Y_1'$ be a closed immersion of derived schemes which  is an isomorphism on underlying topological spaces, and let $f\colon Y_1 \to Y_2$ be an affine map in $\dSch$.
Then the following hold.
\begin{enumerate}
 \item \label{lem:nilOpenPushout:1} The pushout 
 square
 \[ 
 \begin{tikzcd}
 Y_1 \ar[r, "i"] \ar[d, "f"] & Y_1' \ar[d, "f'"] \\
 Y_2 \ar[r, "i'"] & Y_2 \sqcup_{Y_1} Y_1'
 \end{tikzcd}
 \]
 exists in $\dSch$. Write $Y_{2}' := Y_2 \sqcup_{Y_1} Y_1'$.
The map $Y_2 \to Y_2'$ is a closed immersion and  an isomorphism on underlying topological spaces. In particular, if $Y_2$ is qcqs, then so is $Y_2'$.

\item \label{lem:nilOpenPushout:2} 
For an affine open subscheme $U_2 \subseteq Y_2$ with $U_1:=f^{-1}(U_2) \subseteq Y_1$, and the corresponding open subschemes 
$U_i' \subseteq   Y_i'$ ($i=1,2$), the map
\[ 
U_2\sqcup_{U_1} U_1' \to U_2'
\]
is an isomorphism.

\item \label{lem:nilOpenPushout:3}  If $f$ is an open immersion, then so is  
$f'$.

\item \label{lem:nilOpenPushout:4} 
Assume  
$i$ exhibits  $Y_1$ as the underlying classical scheme of $Y'_{1}$. Then $i'\colon Y_{2} \to Y_{2}'$ is an isomorphism on underlying classical schemes.
\end{enumerate}
\end{lemma}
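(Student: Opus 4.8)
The plan is to construct the pushout explicitly as a derived‑ringed space and then read off all four assertions; as an alternative one can invoke \cite{GRII} for the existence statement in~\ref{lem:nilOpenPushout:1} and argue as below for the rest. Since $i$ induces an isomorphism on underlying topological spaces, regard $\cO_{Y_1'} \to \cO_{Y_1}$ as a map of sheaves of derived rings on the single space $|Y_1| = |Y_1'|$, and define
\[
\cO_{Y_2'} := \cO_{Y_2} \times_{f_{*}\cO_{Y_1}} f_{*}\cO_{Y_1'}
\]
as a sheaf of derived rings on $|Y_2|$, where $f_{*}$ denotes pushforward along the continuous map $|Y_1| = |Y_1'| \to |Y_2|$ underlying $f$. \emph{First} I would check that $(|Y_2|, \cO_{Y_2'})$ is a derived scheme: over an affine open $U_2 = \Spec(B_2) \subseteq Y_2$, the morphism $f$ is affine, so $f^{-1}(U_2) = \Spec(B_1)$, and the corresponding open of $Y_1'$ is $\Spec(B_1')$ (affine, being a nilpotent thickening of the affine scheme $\cla{\Spec(B_1)}$ — here one uses $|Y_1'| = |Y_1|$ and that affineness depends only on the classical truncation); evaluating the defining pullback on distinguished opens and using that localization of derived rings commutes with finite limits, one identifies $\cO_{Y_2'}|_{U_2}$ with the structure sheaf of $\Spec(B_2')$, where $B_2' := B_2 \times_{B_1} B_1'$. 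The two projections of the pullback give morphisms $i' \colon Y_2 \to Y_2'$ and $f' \colon Y_1' \to Y_2'$ sitting in a commutative square under $Y_1$ (the two composites $\cO_{Y_2'} \to f_{*}\cO_{Y_1}$ agree by construction of the pullback). On $\pi_0$ the Mayer--Vietoris sequence for $B_2' = B_2 \times_{B_1} B_1'$ shows that $\pi_0 B_2' \to \pi_0 B_2$ is surjective, so $i'$ is a closed immersion in the sense of the excerpt; it is trivially an isomorphism on underlying spaces, and its kernel on $\pi_0$ is an extension of (an ideal inside) $\ker(\pi_0 B_1' \to \pi_0 B_1)$ — contained in the nilradical since $\cla{i}$ is a homeomorphism — by the square-zero ideal $\im(\pi_1 B_1 \to \pi_0 B_2')$, hence consists of nilpotents. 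Since $|Y_2'| = |Y_2|$ with matching affine opens, $Y_2'$ is qcqs whenever $Y_2$ is. Assertion~\ref{lem:nilOpenPushout:2} is then immediate from this construction, as for affine $U_2$ both $U_2 \sqcup_{U_1} U_1'$ and $U_2'$ are $\Spec$ of $\Gamma(U_2,\cO_{Y_2}) \times_{\Gamma(U_1,\cO_{Y_1})} \Gamma(U_1',\cO_{Y_1'})$; I would establish~\ref{lem:nilOpenPushout:2} before the universal property.

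\emph{Next} I would verify that $Y_2'$ has the universal property of the pushout, which I expect to be the step requiring the most care. The point is that for any $T \in \dSch$ a morphism $Y_2' \to T$ amounts to a continuous map $\phi \colon |Y_2| \to |T|$ together with a stalkwise local map of sheaves of derived rings $\cO_T \to \phi_{*}\cO_{Y_2'}$; since $\phi_{*}$ and $\Map_{\CAlg^{\Delta}}(\cO_T, -)$ preserve limits and $\phi_{*}\cO_{Y_2'} = \phi_{*}\cO_{Y_2} \times_{(\phi\circ|f|)_{*}\cO_{Y_1}} (\phi\circ|f|)_{*}\cO_{Y_1'}$, such data is precisely a compatible pair of morphisms $Y_2 \to T$ and $Y_1' \to T$ restricting along $Y_1 \to Y_2$, resp. $Y_1 \to Y_1'$, to the same morphism $Y_1 \to T$. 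The one thing to check carefully is that the locality conditions distinguishing morphisms of derived schemes from morphisms of derived‑ringed spaces match up on both sides; this uses that on stalks the maps $\cO_{Y_2'} \to \cO_{Y_2}$ and $\cO_{Y_1'} \to \cO_{Y_1}$ are surjective with nilpotent kernel, hence local homomorphisms, and that locality is invariant under homotopy. Together with~\ref{lem:nilOpenPushout:2} this gives~\ref{lem:nilOpenPushout:1}.

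\emph{Finally}, assertions~\ref{lem:nilOpenPushout:3} and~\ref{lem:nilOpenPushout:4} follow from the explicit formula for $\cO_{Y_2'}$. For~\ref{lem:nilOpenPushout:3}: if $f$ is an open immersion, restrict $\cO_{Y_2'} = \cO_{Y_2} \times_{f_{*}\cO_{Y_1}} f_{*}\cO_{Y_1'}$ to the open subset $|Y_1| \subseteq |Y_2| = |Y_2'|$; using $j^{*}j_{*} \simeq \id$ for an open immersion $j$, this restriction becomes $\cO_{Y_1} \times_{\cO_{Y_1}} \cO_{Y_1'} \simeq \cO_{Y_1'}$, so $f'$ identifies $Y_1'$ with the open subscheme of $Y_2'$ with underlying space $|Y_1|$, and is therefore an open immersion. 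For~\ref{lem:nilOpenPushout:4}: if $i$ exhibits $Y_1$ as $\cla{Y_1'}$, then $\cO_{Y_1}$ is discrete and $\cO_{Y_1'} \to \cO_{Y_1}$ is the truncation, so $\pi_0\cO_{Y_1'} \to \pi_0\cO_{Y_1}$ is an isomorphism; since $f$ is affine, $f_{*}$ is $t$-exact, whence $\pi_1(f_{*}\cO_{Y_1}) = 0$ and $\pi_0(f_{*}\cO_{Y_1'}) \to \pi_0(f_{*}\cO_{Y_1})$ is an isomorphism. The Mayer--Vietoris sequence for $\cO_{Y_2'} = \cO_{Y_2} \times_{f_{*}\cO_{Y_1}} f_{*}\cO_{Y_1'}$ then yields $\pi_0\cO_{Y_2'} \xrightarrow{\sim} \pi_0\cO_{Y_2} \times_{\pi_0 f_{*}\cO_{Y_1}} \pi_0 f_{*}\cO_{Y_1'} \xrightarrow{\sim} \pi_0\cO_{Y_2}$, and the composite is the map induced by $i'$; hence $\cla{i'} \colon \cla{Y_2} \to \cla{Y_2'}$ is an isomorphism. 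The genuinely delicate points throughout are the locality bookkeeping in the universal property and the verification that the kernel of $i'$ on $\pi_0$ consists of nilpotents — where one uses the standard fact that $\im(\pi_1 B_1 \to \pi_0(B_2 \times_{B_1} B_1'))$ is a square-zero ideal — while the rest is a routine unwinding of the definitions.
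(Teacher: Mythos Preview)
Your proposal is correct. The paper takes a much shorter route: for parts~\eqref{lem:nilOpenPushout:1}--\eqref{lem:nilOpenPushout:3} it simply cites \cite[Ch.~1, Cor.~1.3.5]{GRII} without further comment, and only gives an argument for~\eqref{lem:nilOpenPushout:4}. That argument is essentially the same as yours: invoke~\eqref{lem:nilOpenPushout:2} to reduce to the affine case $Y_i = \Spec(A_i)$, $Y_1' = \Spec(A_1')$ with $A_1 = \pi_0(A_1')$, identify $Y_2' = \Spec(A_2')$ with $A_2' = A_2 \times_{A_1} A_1'$, and read off the conclusion from the exact sequence $\pi_1(A_1) \to \pi_0(A_2') \to \pi_0(A_2) \oplus \pi_0(A_1') \to \pi_0(A_1)$ together with $\pi_1(A_1) = 0$ and $\pi_0(A_1') \cong \pi_0(A_1)$. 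Your sheaf-level Mayer--Vietoris computation is the same thing, just phrased globally and then using $t$-exactness of $f_*$ instead of first reducing to affines.

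The difference is entirely one of packaging: you reconstruct the pushout by hand as a derived-ringed space and verify the universal property, whereas the paper outsources this to Gaitsgory--Rozenblyum. Your approach has the advantage of being self-contained and making the local formula $B_2' = B_2 \times_{B_1} B_1'$ explicit (which the paper also uses for~\eqref{lem:nilOpenPushout:4}, appealing to ``the construction of the pushout''). The points you flag as delicate --- the locality bookkeeping in the universal property, and the compatibility of localization with the fiber product needed to identify $\cO_{Y_2'}|_{U_2}$ with the structure sheaf of $\Spec(B_2')$ --- are genuine but standard, and you have identified them correctly. There is no gap.
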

\begin{proof}
Except for \eqref{lem:nilOpenPushout:4}, this is \cite[Ch.~1, Cor.~1.3.5]{GRII}.
By \eqref{lem:nilOpenPushout:2}, we may assume $Y_i=\Spec(A_i)$ ($i=1,2$) and $Y_1'=\Spec(A_1')$ are affine and $A_1=\pi_0(A_1')$.
By the construction of the pushout, $Y_2'=\Spec(A_2')$ where $A_2'=A_2\times_{A_1} A_1'$ is the pullback in derived rings.
We thus have an exact sequence of homotopy groups
\[ 
\pi_1(A_1) \to \pi_0(A_2')\to \pi_0(A_2) \oplus \pi_0(A_1')\to \pi_0(A_1),
\]
which implies \eqref{lem:nilOpenPushout:4}  as  $\pi_1(A_1)=0$ and $ \pi_0(A_1') \cong \pi_0(A_1)$.
\end{proof}

\section{Modifications of derived schemes}
	\label{sec:modifications}
	
In the following, $X$ always denotes a qcqs derived scheme.

\begin{dfn}
Let $U \subseteq X$ be a quasi-compact open subscheme. A \emph{$U$-modification} of $X$ is a proper morphism $f \colon Y \to X$ which is an isomorphism over $U$. A \emph{closed $U$-modification} is a $U$-modification which is a closed immersion. 
\end{dfn}

Note that we do not assume that a $U$-modification induces a bijection of the set of generic points.
For example, if $Z \hookrightarrow X$ is a quasi-smooth closed immersion with $|Z| \cap |U| = \emptyset$, then the derived blowup $\Bl_{Z}(X) \to X$ is a $U$-modification which is moreover lafp (see \ref{subs:derived-blowups}). In fact, derived blowups and lafp closed $U$-modifications generate all lafp $U$-modifications in the following sense:

\begin{thm}
	\label{thm:structure-abu}
Assume that $X$ carries an ample line bundle. Let $U \subseteq X$ be a quasi-compact open subscheme. 
Let $f \colon Y \to X$ be a $U$-modification of $X$, whose underlying map of classical schemes $\cla Y \to \cla X$ is classically finitely presented.
Then there exists a commutative diagram
\[
\begin{tikzcd}
 Y' \ar[d, hook, "h"']\ar[r, "g"] & Y \ar[d, "f"] \\ 
 \widetilde X \ar[r, "p"] & X 
\end{tikzcd}
\]
where $g$ is  $U$-modification of $Y$, $h$ is a closed $U$-modification,  $p$ is a derived blowup with center set-theoretically contained in  $X \setminus U$, and the induced map $(h,g)\colon Y' \to \widetilde X \times_{X} Y$ is locally of finite presentation. In particular, $g$ is locally of finite presentation, and, if $f$ is locally of finite presentation (or lafp, or locally of finite generation to order $n$, respectively), then $h$ is locally of finite presentation (or lafp, or locally of finite generation to order $n$).

Moreover, $Y'$ carries a $(p\circ h)$-ample line bundle.
\end{thm}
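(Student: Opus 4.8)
The plan is to prove the statement by reducing the geometric content to a classical flattening theorem and then transporting it to derived geometry with the help of the derived blow-up and the preliminary lemmas of Section~\ref{sec:preliminaries}.

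\emph{Classical input.} First I would pass to classical truncations. Applying the flattening theorem of Raynaud--Gruson (equivalently, the theory of $U$-admissible blow-ups) to the finitely presented morphism $\cla f\colon \cla Y\to \cla X$ and the quasi-compact open $\cla U$, I obtain a $\cla U$-admissible blow-up $\bar X=\Bl_{\bar W}(\cla X)\to\cla X$ with classically finitely presented center $\bar W$ supported in $\cla X\setminus \cla U$, such that the strict transform $\bar Y'$ of $\cla Y$ maps to $\bar X$ via a closed immersion which is still an isomorphism over $\cla U$. This is the only non-derived ingredient; it records the fact that blow-ups dominate modifications.

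\emph{Lifting to derived geometry.} Since $X$ carries an ample line bundle, Lemma~\ref{lem:existence-quasi-smooth-closed-subscheme-structure} yields a quasi-smooth closed subscheme $W\hookrightarrow X$ with $\cla W=\bar W$. I set $\widetilde X:=\dBl_W(X)$ with structure map $p\colon\widetilde X\to X$; by Subsection~\ref{subs:derived-blowups} this is a derived blow-up whose center is set-theoretically contained in $X\setminus U$ and which carries a $p$-ample line bundle $\cO(1)$, and moreover $\bar X$ embeds as a closed subscheme of $\cla{\widetilde X}$, compatibly over $\cla U$. Next I form the derived fibre product $P:=\widetilde X\times_X Y$. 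Its projection $\pr_Y\colon P\to Y$ is the base change of $p$, hence a proper, locally of finite presentation derived blow-up, and its projection $\pr_{\widetilde X}\colon P\to\widetilde X$ is the base change of $f$, hence proper and of exactly the same finiteness type as $f$. As both $p$ and $f$ are isomorphisms over $U$, one has $P|_U\simeq U$ as derived schemes.

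\emph{Construction of $Y'$ and verification.} I would then take $Y'$ to be a locally finitely presented model of the derived strict transform of $Y$ inside $P$. Concretely, the derived scheme-theoretic closure of the open $U\hookrightarrow P$ is a closed immersion $Y_0'\hookrightarrow P$ which is an isomorphism over $U$ in the derived sense and whose classical truncation is the classical strict transform $\bar Y'$ of the previous step; in particular $\cla{Y_0'}\to\cla P$ is classically finitely presented and $\cla{Y_0'}\to\cla{\widetilde X}$ is a closed immersion. Applying Lemma~\ref{lem:closed-subscheme} to the qcqs scheme $P$, the quasi-compact open $U\subseteq P$, and the closed immersion $Y_0'\hookrightarrow P$ upgrades the latter to a closed immersion $(h,g)\colon Y'\hookrightarrow P$ that is locally of finite presentation and still an isomorphism over $U$, with unchanged classical truncation $\bar Y'$. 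I set $h:=\pr_{\widetilde X}\circ(h,g)$ and $g:=\pr_Y\circ(h,g)$, so that $f\circ g=p\circ h$. Now all the asserted properties follow formally: by the conventions of Subsection~\ref{subs:derived-rings} a map is a closed immersion iff its classical truncation is, and $\cla h=\cla{\pr_{\widetilde X}}|_{\bar Y'}$ is a closed immersion by the first step, so $h$ is a closed $U$-modification; $g$ is the composite of the proper, lfp base change $\pr_Y$ of $p$ with the closed immersion $(h,g)$, hence a lfp $U$-modification; and since the finiteness of $\pr_{\widetilde X}$ equals that of $f$ and composition with the lfp map $(h,g)$ preserves lfp, lafp, and finite generation to order $n$, the map $h$ inherits the finiteness of $f$. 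Finally, restricting $\cO(1)$ along the closed immersion $h$ produces the desired $(p\circ h)$-ample line bundle on $Y'$: its non-vanishing loci on $Y'$ are the intersections with the affine non-vanishing loci on $\widetilde X$, hence affine, so ampleness relative to $p\circ h$ follows from the definition in Subsection~\ref{subs:ample}.

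\emph{Main obstacle.} The crux is the interface between the classical flattening and the derived structure in the third step: I must guarantee that the derived strict transform $Y_0'$ is \emph{simultaneously} an isomorphism over $U$ as a derived scheme --- so that $g$ and $h$ are genuine $U$-modifications --- and has classical truncation exactly the flattened strict transform $\bar Y'$ --- so that $\cla h$ is a closed immersion and Lemma~\ref{lem:closed-subscheme} is applicable. Controlling the derived closure well enough to pin down its truncation, together with the finiteness bookkeeping (tracking lfp, lafp, and finite generation to order $n$ through base change along $p$ and composition with $(h,g)$), is the delicate part, and is precisely what necessitates the refined formulation with separate finiteness hypotheses.
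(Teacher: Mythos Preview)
Your outline contains a genuine gap at the step you yourself flag as the ``main obstacle'': the construction of the derived strict transform $Y_0'\hookrightarrow P$ as the ``derived scheme-theoretic closure of $U\hookrightarrow P$''. No such closure operation is set up in Section~\ref{sec:preliminaries}, and it is not clear how to make sense of it so that simultaneously (a) it is a closed immersion of derived schemes, (b) it is an isomorphism over $U$ \emph{in the derived sense}, and (c) its classical truncation is the classical strict transform and in particular is \emph{finitely presented} in $\cla P$. Already classically, scheme-theoretic images/closures need not be finitely presented without Noetherian hypotheses, so the hypothesis of Lemma~\ref{lem:closed-subscheme} is not automatic; and derived-geometrically there is no obvious functor producing a closed subscheme from an open one with control on truncations. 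Without this, neither the closed-immersion property of $h$ nor the applicability of Lemma~\ref{lem:closed-subscheme} is justified.

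The paper avoids this problem entirely and proceeds differently. After the Raynaud--Gruson step it performs an \emph{additional} $\cla U$-admissible blowup so that the complement of $\cla U$ becomes an effective Cartier divisor, hence $\cla U$ becomes an \emph{affine} open in the resulting classical scheme $Y_1$. It then approximates the (possibly non-finitely-presented) closed immersion $Y_1\hookrightarrow \cla{\widetilde X}$ by finitely presented ones $Y_\alpha$, picks one through which the map to $\cla Y$ factors, and uses the pushout of Lemma~\ref{lem:nilOpenPushout}, namely $Y_2=Y_\alpha\sqcup_{\cla U}U$, to \emph{glue the derived structure of $U$ onto the classical scheme $Y_\alpha$}. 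This pushout is what replaces your undefined derived closure: it is a closed subscheme of $\widetilde X\times_X Y$ which is a derived isomorphism over $U$ by construction, and its classical truncation is $Y_\alpha$, hence finitely presented. Only then does Lemma~\ref{lem:closed-subscheme} apply. The affine condition on $\cla U\hookrightarrow Y_\alpha$, secured by the extra blowup, is essential for Lemma~\ref{lem:nilOpenPushout}; your proposal has no analogue of this step.
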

 
\begin{proof}
By assumption, the morphism of classical schemes $\cla f\colon\cla Y \to \cla X$ is proper and classically of finite presentation.
It is also an isomorphism over $\cla U$. 
By \cite[Cor.~5.7.12]{Raynaud-Gruson} (or \cite[Tag 081T]{stacks}), there exists a $\cla U$-admissible blowup\footnote{i.e., a blowup in a finitely presented closed subscheme of $\cla X$ which is set-theoretically contained in the complement of $\cla U \subseteq \cla X$} $Y_{0} \to \cla X$ such that the morphism $Y_{0} \to \cla X$ factors through $\cla Y \to \cla X$.
For the constructions to come, we need the open immersion $\cla U \to Y_{0}$ to be affine. As this is not necessarily the case, we make a further blowup: As $\cla U$ is quasi-compact, there exists a finitely presented closed subscheme $T_{0} \hookrightarrow Y_{0}$ whose underlying topological space is $Y_{0} \setminus \cla U$: This follows for instance from \cite[Cor.~6.9.15]{EGAInew} which allows us to write the ideal sheaf of some closed subscheme of $Y_{0}$ with support $Y_{0}\setminus \cla U$ as a filtered colimit of quasi-coherent sub-ideal sheaves of finite type. By quasi-compactness of $\cla U$, the closed subscheme defined by one of them will have support $Y_{0} \setminus \cla U$.
Let $Y_{1} \to Y_{0}$ be the blowup of $Y_{0}$ in $T_{0}$.
Then the canonical open immersion $\cla U \to Y_{1}$ is affine as its complement is a Cartier divisor. As the composite of two $\cla U$-admissible blowups is a $\cla U$-admissible blowup \cite[Tag 080L]{stacks}, the composition  $Y_{1} \to Y_{0} \to \cla X$ is a $\cla U$-admissible blowup, say $Y_{1} = \Bl_{S_{0}}(\cla X)$ for some classically finitely presented closed subscheme $S_{0} \hookrightarrow \cla X$.

As $X$ carries an ample line bundle, Lemma~\ref{lem:existence-quasi-smooth-closed-subscheme-structure} implies the existence of a quasi-smooth closed immersion of derived schemes $S \hookrightarrow X$ whose classical truncation is $S_{0} \hookrightarrow \cla X$.
Let $p\colon \widetilde X \to X$ be the derived blowup of $X$ in $S$. 
Then there is a canonical closed immersion $Y_{1}\hookrightarrow  \cla {\widetilde X}$, which is an isomorphism over $\cla U$. Note that $Y_{1}$ need not be classically of finite presentation over $X$. 
However, using \cite[Cor.~6.9.15]{EGAInew} again we see that the closed immersion $Y_{1} \hookrightarrow \cla {\widetilde X}$ can be written as a cofiltered limit of classically finitely presented closed immersions $Y_{\alpha} \to \cla {\widetilde X}$ all of which are isomorphisms over $\cla U$. 
As $\cla Y \to \cla X$ is classically of finite presentation, there exists an $\alpha$ such that the $\cla X$-morphism $Y_{1} \to \cla Y$ factors through a morphism $Y_{\alpha} \to \cla Y$, see \cite[Prop.~8.14.2]{EGAIV3}. 
Thus, so far, we have constructed a commutative diagram of classical schemes finitely presented over $\cla X$
\[
\begin{tikzcd}
 Y_{\alpha} \ar[d, hook]\ar[r] & \cla Y \ar[d] \\ 
 \cla {\widetilde X} \ar[r] & \cla X 
\end{tikzcd}
\]
in which all morphisms are isomorphisms over $\cla U$. The lower right corner is the classical truncation of the cospan $\widetilde X \xrightarrow{p} X \xleftarrow{f} Y$. 

As the composite of the affine open immersion $\cla U \to Y_{1}$ with the closed immersion $Y_{1} \hookrightarrow Y_{\alpha}$, the open immersion $\cla U \to Y_{\alpha}$ is affine, too. By 
Lemma~\ref{lem:nilOpenPushout} we may hence form the pushout $Y_{2} = Y_{\alpha} \sqcup_{\cla U} U$ of derived schemes, for which we have $\cla Y_{2} = Y_{\alpha}$. 
As $p$ and $f$ are isomorphisms over $U$, we get induced morphisms $h_{2}\colon Y_{2} \to \widetilde X$, $g_{2}\colon Y_{2} \to Y$, and a commutative diagram 
\[
\begin{tikzcd}
 Y_{2} \ar[d, hook, "h_{2}"']\ar[r, "g_{2}"] & Y \ar[d, "f"] \\ 
 \widetilde X \ar[r, "p"] & X. 
\end{tikzcd}
\]
Note that $h_{2}$ is a closed immersion, as this only depends on the underlying map of classical schemes. Moreover, all morphisms in the above diagram are $U$-modifications. 
However, $h_{2}$ and $g_{2}$ need not satisfy the desired finiteness condition. 
In order to remedy this, we consider the induced morphism $k_{2} = (h_{2}, g_{2})\colon Y_{2} \to \widetilde X \times_{X} Y$. As $\cla Y \to \cla X$ is separated, and as $h_{2}$ is a closed immersion, $k_{2}$ is a closed immersion, too. It is also an isomorphism over $U$.
By construction, the map of underlying classical schemes $\cla{k_{2}} \colon Y_{\alpha} = \cla Y_{2} \to \cla (\widetilde X \times_{X} Y)$ is classically of finite presentation. We can hence apply Lemma~\ref{lem:closed-subscheme} to obtain a factorization of $k_{2}$ through a closed derived subscheme $k'\colon Y' \hookrightarrow \widetilde X \times_{X} Y$ such that $k'$ is  locally of finite presentation and an isomorphism over $U$, and $\cla Y' \cong \cla Y_{2}$. 
Define $g$ and $h$ to be the composites of $k'$ with the two projections from $\widetilde X \times_{X} Y$ to $Y$ and $\widetilde X$, respectively. Both are $U$-modifications. As $p\colon \widetilde X \to X$ is locally of finite presentation, so is the first projection, and hence $g$. If $f$ is locally of finite presentation (or lafp, or locally of finite generation to order $n$), then so is the second projection and hence also $h$. 
As the underlying map of classical schemes $\cla h$ identifies with $\cla h_{2} \colon\cla Y_{2} \hookrightarrow \cla{\widetilde{X}}$, it is a closed immersion. 
This finishes the construction of the asserted commutative diagram, and the proof of the required finiteness conditions.

It remains to prove the claim about ample line bundles. 
As discussed in~\ref{subs:derived-blowups}, the canonical line bundle $\cO(1)$ on the derived blowup $\widetilde X$ is $p$-ample.
As $h$ is a closed immersion and thus in particular affine, the pullback $h^{*}\cO(1)$ is then $(p\circ h)$-ample.
\end{proof}

\section{Pro-cdh descent for connective localizing invariants}
	\label{sec:procdh}

In this section, we prove our  main results on pro-descent for localizing invariants. The strategy is the same as in \cite{KST}: We first prove the result for the special cases of derived blowups and finite modifications and then use the geometric input from Theorem~\ref{thm:structure-abu} to handle the general case.

We begin by fixing some notation. 
Let $k$ be a fixed commutative base ring (e.g.~$k=\Z$).
If $E$ is an additive invariant of small $k$-linear $\infty$-categories with values in a stable presentable $\infty$-category $\cC$, e.g.~the $\infty$-category of spectra, and $X$ is a qcqs derived $k$-scheme, we write $E(X)$ for $E(\Perf(X))$.
Let $Z \subseteq |X|$ be a closed subset with quasi-compact open complement. Recall from \ref{subs:formal-completion} that the formal completion $X_{Z}^{\wedge}$ is an ind-derived scheme. Applying $E$ we thus obtain a pro-object $E(X_{Z}^{\wedge})$. We  write $E(X, X_{Z}^{\wedge})$ for the relative term $\fib(E(X) \to E(X_{Z}^{\wedge}))$ in $\Pro(\cC)$.

A version of the following Proposition was first proven in \cite{KST}. There all schemes were assumed to be classical Noetherian schemes, $E$ was $K$-theory, and the result only gave a weakly cartesian square of pro-spectra, see below for this notion.
In a letter to Kerz, Antieau \cite{Antieau-notes} described a simplification of the proof which at the same time gives a cartesian square. We thank Ben Antieau for allowing us to include his argument in our paper. 

\begin{prop}\label{prop:descent-derived-bu}
Let $X$, $Z$, and $E$ be as above.
Let $\widetilde X \to X$ be a derived blowup in some quasi-smooth closed immersion $S \hookrightarrow X$ with $S$ set-theoretically contained in $Z$. Then the square 
\[
\begin{tikzcd}
 E(X) \ar[d]\ar[r] & E(X_{Z}^{\wedge}) \ar[d] \\ 
 E(\widetilde X) \ar[r] & E(\widetilde X_{Z}^{\wedge}) 
\end{tikzcd}
\]
is cartesian in $\Pro(\cC)$.
\end{prop}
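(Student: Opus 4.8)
The plan is to reduce the claim to a statement about the categories of perfect modules and then invoke a general pro-descent criterion for localizing (more precisely, additive) invariants. The key geometric input is that a derived blowup of $X$ in a quasi-smooth center $S\subseteq Z$ behaves, away from $Z$, like the identity, while the interaction with the formal completions along $Z$ can be analyzed on the level of the ambient categories $\QCoh(-\on Z)$. Concretely, I would first recall from \ref{subs:derived-blowups} that $p\colon\widetilde X\to X$ is proper, locally of finite presentation, and an isomorphism over the open complement $U=|X|\setminus Z$, and that the formation of $\widetilde X$, of the formal completions $X_Z^\wedge$, $\widetilde X_Z^\wedge$, and of the base change $\widetilde X_Z^\wedge\simeq \widetilde X\times_X X_Z^\wedge$ all commute with one another (using the base-change compatibility of derived blowups and of formal completions stated in \ref{subs:derived-blowups} and \ref{subs:formal-completion}). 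Since $E$ is additive, it suffices to show that the square of stable $\infty$-categories obtained by applying $\Perf$, or rather the associated square of Verdier quotients by the subcategories supported away from $Z$, is suitably cartesian after passing to the pro-system indexed by the infinitesimal thickenings $Z'\hookrightarrow X$ defining $X_Z^\wedge$.

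The central step is an identification of fibers. For each closed immersion $Z'\hookrightarrow X$ with $|Z'|=Z$, set $\widetilde{Z'}=\widetilde X\times_X Z'$; this is the corresponding member of the ind-system defining $\widetilde X_Z^\wedge$ by the base-change property. I would show that the natural map
\[
\fib\bigl(\Perf(X)\to\Perf(\widetilde X)\bigr)\longrightarrow \fib\bigl(\Perf(Z')\to\Perf(\widetilde{Z'})\bigr)
\]
is an equivalence pro-systematically in $Z'$. The point is that $\fib(\Perf(X)\to\Perf(\widetilde X))$ is insensitive to what happens over $U$, because $p$ is an isomorphism there; this is a standard consequence of the projection formula / proper base change for the derived blowup together with the fact that $p_*$ is fully faithful on the relevant subcategory (Khan--Rydh \cite{KhanRydh} compute $p_*\cO_{\widetilde X}\simeq\cO_X$, and more generally $p^*$ is fully faithful on perfect complexes). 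Hence the fiber is "supported on $Z$" in an appropriate sense, and a formal-completion / derived-Nakayama argument along $Z$ — the categorical counterpart of the statement that $\QCoh(X\on Z)^{\cn}$ is generated by perfect objects, Lemma~\ref{lem;QCohZXcn} — identifies the pro-system $\{\fib(\Perf(X)\to\Perf(\widetilde X))\otimes_{\cO_X}\cO_{Z'}\}_{Z'}$ with the original fiber. Applying the additive invariant $E$ then turns the square in question into the assertion that
\[
E(X,\widetilde X):=\fib\bigl(E(X)\to E(\widetilde X)\bigr)\ \xrightarrow{\ \sim\ }\ \{E(Z',\widetilde{Z'})\}_{Z'}=E(X_Z^\wedge,\widetilde X_Z^\wedge)
\]
in $\Pro(\cC)$, which is exactly the cartesianness of the displayed square.

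I expect the main obstacle to be making the "derived Nakayama along $Z$" step precise at the level of the pro-system, i.e.\ showing that the fiber category, which a priori only knows about a formal neighborhood of $Z$, is actually computed by the pro-system of its derived base changes to the thickenings $Z'$. One clean way to organize this — and the route I would follow, since it is Antieau's simplification — is to avoid manipulating the fiber categories directly and instead argue on homotopy groups: using that $E$ is a localizing invariant, reduce to the affine case $X=\Spec(A)$, $Z=V(f_1,\dots,f_r)$, where $\widetilde X$ is the derived blowup and $X_Z^\wedge=\{\Spec(A\mmod f_1^\alpha,\dots,f_r^\alpha)\}_\alpha$ by \eqref{eq:completion-affine}; then the vanishing of $\lim$ and $\lim^1$ of the relative pro-groups follows from a Milnor-square / glueing argument for the derived blowup combined with the fact that the relevant $\Tor$-groups stabilize because $f_i^\alpha$ acts locally nilpotently on the fiber. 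The delicate point throughout is bookkeeping the pro-structure — ensuring equivalences hold pro-systematically rather than termwise — but since every member of the ind-system of thickenings is of the expected form and the derived blowup commutes with the base changes involved, this is a matter of care rather than a genuine new difficulty, and the argument will close as above.
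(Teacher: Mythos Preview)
Your proposal sketches the right heuristic --- the ``fibre'' of $E(X)\to E(\widetilde X)$ should be supported on $Z$ and hence insensitive to formal completion along $Z$ --- but it does not supply the mechanism that makes this precise, and the paper's proof is quite different from what you describe.

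The paper (following Antieau) uses the semi-orthogonal decomposition of $\Perf(\widetilde X)$ for a derived blowup in a quasi-smooth centre $S$ of virtual codimension $r$ \cite[Thm.~C]{KhanSemiOrthogonal}: there is a decomposition $(\cB(0),\cB(-1),\dots,\cB(-r+1))$ with $\cB(0)\simeq\Perf(X)$ via $p^{*}$ and $\cB(-k)\simeq\Perf(S)$ for $1\le k\le r-1$. Since this decomposition is $\Perf(X)$-linear, it base-changes along $\Perf(X)\to\Perf(Z')$ for every thickening $Z'$, giving the analogous decomposition of $\Perf(\widetilde X\times_X Z')$ with pieces $\Perf(Z')$ and $\Perf(S\times_X Z')$. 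Applying $E$ splits both vertical maps of the square compatibly, and the claim reduces to the statement that $\Perf(S)\to\{\Perf(S\times_X Z')\}_{Z'}$ is a pro-equivalence. But $\{S\times_X Z'\}_{Z'}=S_Z^\wedge=S$ since $|S|\subseteq Z$, so this is immediate. Note that this gives an honest cartesian square in $\Pro(\cC)$, not merely a weakly cartesian one.

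Your outline has two concrete gaps. First, you repeatedly write $\fib(\Perf(X)\to\Perf(\widetilde X))$; but $p^{*}$ is fully faithful, so any reasonable ``kernel'' is zero, and an additive invariant does not in general send arbitrary functors to fibre sequences --- what it does send to (split) fibre sequences are semi-orthogonal decompositions, which is exactly the structure you never invoke. Without the SOD you have no identification of $\cof(E(X)\to E(\widetilde X))$, and your ``derived Nakayama'' step has nothing concrete to act on. Second, your fallback route via homotopy groups, $\lim/\lim^{1}$, and Milnor-square arguments would at best yield a \emph{weakly} cartesian square; the proposition asserts the stronger conclusion that the square is cartesian in $\Pro(\cC)$, and this genuinely requires working at the categorical level as above rather than truncating.
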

\begin{proof}
Let $r\geq 1$ be the virtual codimension of the derived blowup, and let $D$ be its exceptional divisor, i.e., the universal virtual Cartier divisor on the derived blowup, so that there is a commutative diagram
\[
\begin{tikzcd}
 D \ar[d, "q"]\ar[r, hook, "j"] & \widetilde X \ar[d, "p"] \\ 
 S \ar[r, hook, "i"] & X .
\end{tikzcd}
\]
Recall from \cite[Thm.~C]{KhanSemiOrthogonal} that $\Perf(\widetilde X)$ has a semi-orthogonal decomposition as follows. The functor $p^{*}\colon \Perf(X) \to \Perf(\widetilde X)$ is fully faithful, denote its essential image by $\cB(0)$. For $1 \leq k \leq r-1$, the composed functor $j_{*}(q^{*}(-)\otimes_{\cO_{D}} \cO_{D}(-k)) \colon \Perf(S) \to \Perf(\widetilde X)$ is fully faithful, denote its essential image by $\cB(-k)$. Then the sequence of full subcategories $(\cB(0), \cB(-1), \dots, \cB(-r+1))$ forms a semi-orthogonal decomposition of $\Perf(\widetilde X)$. In particular, there is a decomposition
\begin{equation}\label{eq:semi1}
E(\widetilde X) \simeq E(\cB(0)) \oplus \bigoplus_{k=1}^{r-1} E(\cB(-k)) \simeq E(X) \oplus \bigoplus_{k=1}^{r-1} E(S).
\end{equation}

Now let $Z' \hookrightarrow X$ be any closed derived subscheme with $|Z'|=Z$. Note that all $\infty$-categories appearing above are in fact $\Perf(X)$-linear, as are the functors between them. In particular, we can base change the semi-orthogonal decomposition of $\Perf(\widetilde X)$ along $\Perf(X) \to \Perf(Z')$. As there are canonical equivalences (as follows from \cite[Cor. 9.4.3.8]{SAG} by passing to compact objects; see also \cite[Thm.~4.7]{BZFN} with slightly different hypotheses)
\begin{align*}
&\Perf(\widetilde X)\otimes_{\Perf(X)} \Perf(Z') \simeq \Perf(\widetilde X \times_{X} Z'), \\
&\Perf(S) \otimes_{\Perf(X)} \Perf(Z') \simeq \Perf(S \times_{X} Z'),
\end{align*}
we conclude that $\Perf(\widetilde X \times_{X} Z')$ admits a semi-orthogonal decomposition 
\[
(\cB(0)_{Z'}, \cB(-1)_{Z'}, \dots, \cB(-r+1)_{Z'})
\]
with $\cB(0)_{Z'} \simeq \Perf(Z')$ and $\cB(-k)_{Z'} \simeq  \Perf(S \times_{X} Z')$ for $1\leq k \leq r-1$. In particular,
\begin{equation}
	\label{eq:semi2}
E(\widetilde X \times_{X} Z') \simeq E(Z') \oplus \bigoplus_{k=1}^{r-1} E(S \times_{X} Z').
\end{equation}
Recall from \eqref{eq:completion} that $E(X_{Z}^{\wedge}) \in \Pro(\cC)$ is given concretely as the pro-object $\{ E(Z')\}_{Z' \hookrightarrow X, |Z'|=Z}$  where $Z'$ runs through all closed derived subschemes of $X$ with $|Z'|=Z$. As formal completion is compatible with base change (see~\ref{subs:formal-completion}), we similarly have $E(\widetilde X_{Z}^{\wedge}) = \{E(\widetilde X \times_{X} Z')\}_{Z' \hookrightarrow X, |Z'|=Z}$.
Comparing the decompositions \eqref{eq:semi1} and \eqref{eq:semi2} it thus suffices to prove that the functor induced by pullback
\[
\Perf(S) \to \{ \Perf(S\times_{X} Z') \}_{Z' \hookrightarrow X, |Z'|=Z}
\]
is an equivalence of pro-$\infty$-categories. For this, it suffices to check that the map of ind-derived schemes $\{S \times_{X} Z'\}_{Z' \hookrightarrow X, |Z'|=Z} \to S$ is an equivalence. But this is clear: By \ref{subs:formal-completion} again, the source represents the $Z$-completion $S_{Z}^{\wedge}$ of the target. As $S$ is set-theoretically contained in $Z$, we clearly have $S_{Z}^{\wedge} = S$.
\end{proof}

Let $\ell$ be an integer. Recall from \cite[Def.~2.5]{LT} that a spectra valued localizing invariant $E$ is called \emph{$\ell$-connective} if, for any $n$-connective map ($n\geq 1$)  of connective $\E_{1}$-ring spectra $A \to B$, the induced map $E(A) \to E(B)$ is $(n+\ell)$-connective. For example, $K$-theory, topological cyclic homology $\TC$ and rational negative cyclic homology $\HN(-\otimes\Q/\Q)$ are 1-connective, $\THH$ is 0-connective; see \cite[Ex.~2.6]{LT}.

Recall also that a map of pro-spectra $\{C_{\alpha}\}_{\alpha} \to \{D_{\alpha}\}_{\alpha}$ is called a \emph{weak equivalence} if each truncation $\{\tau_{\leq n}C_{\alpha}\}_{\alpha} \to \{ \tau_{\leq n}D_{\alpha}\}_{\alpha}$ is an equivalence in $\Pro(\Sp)$ and there are similar notions of being weakly cartesian, weakly contractible, and so on; see \cite[Def.~2.27]{LT}.

\begin{prop}\label{prop:descent-finite-modification} 
Let $E$ be a localizing invariant of small $k$-linear $\infty$-categories that is $\ell$-connective for some integer $\ell$.
Let $f\colon Y \to X$ be a finite, lafp morphism of qcqs derived $k$-schemes which is an isomorphism outside the closed subset $Z$ with $|X|\setminus Z$ quasi-compact.
Then the commutative square of pro-spectra
\begin{equation}\label{diag:1111}
\begin{tikzcd}
 E(X) \ar[d]\ar[r] & E(X_{Z}^{\wedge}) \ar[d] \\ 
 E(Y) \ar[r] & E(Y_{Z}^{\wedge}).
\end{tikzcd}
\end{equation}
is weakly cartesian.
\end{prop}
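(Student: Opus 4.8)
The plan is to reduce to an affine statement about derived rings and then apply the Land--Tamme theorem on localizing invariants of Milnor squares \cite{LT}, using the $\ell$-connectivity of $E$ to discard pro-negligible error terms. For the reduction: $f$ is finite, hence affine and determined over each affine open of $X$, while both $X \mapsto E(X)$ and $X \mapsto E(X_Z^{\wedge})$ send Zariski (indeed Nisnevich) squares of qcqs derived schemes to cartesian squares of (pro-)spectra --- the first because every localizing invariant satisfies Nisnevich descent, the second by applying this levelwise to the ind-system $X_Z^{\wedge} = \{Z'\}$, which restricts compatibly along open immersions. A standard induction over a finite affine cover then reduces us to $X = \Spec(A)$, $Y = \Spec(B)$ with $A \to B$ finite and locally almost of finite presentation, $Z = V(f_1, \dots, f_r)$ for $f_i \in \pi_0(A)$, and $A[1/f_i] \xrightarrow{\sim} B[1/f_i]$. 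Writing $A_{\alpha} := A \mmod (f_1^{\alpha}, \dots, f_r^{\alpha})$ and $B_{\alpha} := B \otimes_A A_{\alpha}$, which describe cofinal sub-ind-systems of $X_Z^{\wedge}$ and $Y_Z^{\wedge}$, the task becomes to show that $\{A\} \to \{A_{\alpha}\}$ and $\{B\} \to \{B_{\alpha}\}$ form a square that is weakly cartesian after applying $E$.

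The algebraic input is a nilpotence observation. As $f$ is finite and lafp, $B$ is almost perfect over $A$, and as $f$ is an isomorphism over $X \setminus Z$, the cofiber $C := \cof(A \to B)$ satisfies $C[1/f_i] = 0$ for all $i$; hence each $\pi_m C$ is a finitely generated $\pi_0(A)$-module on which every $f_i$ acts nilpotently. Consequently any pro-system of $A$-modules built from $C$ through finite (co)limits, shifts and base change, and whose transition maps are multiplications by elements of $(f_1, \dots, f_r)$, is pro-zero after every truncation $\tau_{\leq n}$. I will use this together with the elementary fact that an $\ell$-connective $E$ carries a levelwise map of pro-(connective) rings that is an isomorphism of pro-objects on every truncation to a weak equivalence of pro-spectra: the connectivity estimate gives $\tau_{\leq m} E(R_{\alpha}) \simeq \tau_{\leq m} E(\tau_{\leq n} R_{\alpha})$ once $n \geq m - \ell$, and $E$ preserves isomorphisms of pro-objects.

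Now, for each $\alpha$, I would form the pullback of derived rings $\widetilde A_{\alpha} := A_{\alpha} \times_{B_{\alpha}} B$. Since $B_{\alpha} = B \otimes_A A_{\alpha}$, tensoring the fibre sequence $\fib(A \to A_{\alpha}) \to A \to A_{\alpha}$ with $B$ identifies $\fib(B \to B_{\alpha})$ with $\fib(A \to A_{\alpha}) \otimes_A B$, and then $\fib(A \to \widetilde A_{\alpha})$ --- the total fibre of the square with corners $A, B, A_{\alpha}, B_{\alpha}$ --- is $\fib(A \to A_{\alpha}) \otimes_A C$ up to a shift; by the nilpotence observation, $\{A\} \to \{\widetilde A_{\alpha}\}$ is thus an isomorphism of pro-rings on all truncations, so $\{E(A)\} \simeq \{E(\widetilde A_{\alpha})\}$. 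The square with corners $\widetilde A_{\alpha}, B, A_{\alpha}, B_{\alpha}$ is a Milnor square ($B \to B_{\alpha}$ is $\pi_0$-surjective), so \cite{LT} supplies an $\E_1$-ring $\mathcal B_{\alpha} := A_{\alpha} \odot_{\widetilde A_{\alpha}}^{B} B_{\alpha}$ and a map $B_{\alpha} \to \mathcal B_{\alpha}$ making $(E(\widetilde A_{\alpha}), E(B), E(A_{\alpha}), E(\mathcal B_{\alpha}))$ cartesian. If in addition $\{B_{\alpha}\} \to \{\mathcal B_{\alpha}\}$ is an isomorphism of pro-rings on all truncations, then $\{E(B_{\alpha})\} \simeq \{E(\mathcal B_{\alpha})\}$, and the original square of pro-spectra is weakly equivalent to the pro-system of (cartesian) Land--Tamme squares, hence weakly cartesian.

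The hard part will be this last claim. One must read off from the bar-construction description of the tensor product $\odot$ that the fibre of $B_{\alpha} \to \mathcal B_{\alpha}$ is built, via finite colimits and shifts, from tensor powers over $\widetilde A_{\alpha}$ of the ideal $\fib(\widetilde A_{\alpha} \to A_{\alpha}) \simeq \fib(A \to A_{\alpha}) \otimes_A B$, and that, unwinding this together with the identification $\fib(\widetilde A_{\alpha} \to B) \simeq C \otimes_A A_{\alpha}$ up to shift, each contributing layer carries a tensor factor of the torsion module $C$ and transition maps given by multiplication by the $f_i$ --- whence it is pro-zero after truncation by the nilpotence input, and the elementary principle of the second paragraph applies. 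The affine reduction and the pro-Milnor computation for $\{\widetilde A_{\alpha}\}$ are, by comparison, routine.
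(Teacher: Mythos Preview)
Your approach is correct and follows the paper's: both reduce to the affine case via Zariski descent and show that the pro-square of derived rings $(A, A_\alpha, B, B_\alpha)$ is weakly cartesian using that $J = \fib(A \to B)$ is almost perfect and supported on $Z$, so that each $f_i$ acts nilpotently on every truncation. The only divergence is the final step: the paper concludes directly by invoking \cite[Thm.~2.32]{LT} (for $\ell$-connective invariants), whereas the ``hard part'' you set out to prove --- the levelwise $\odot$-analysis comparing $\mathcal B_\alpha$ with $B_\alpha$ --- is exactly the content of that theorem, so you are unfolding a black box the paper cites.
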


\begin{proof}
We first reduce to the case that $X$ is affine: As $X$ is qcqs, we can write $X$ as the colimit of a finite diagram of open affine subschemes $V_{i} \hookrightarrow X$. As any localizing invariant satisfies Zariski descent , we get $E(X) = \lim_{i} E(V_{i})$ and  $E(Y) = \lim_{i} E(Y \times_{X} V_{i})$.
If $Z' \hookrightarrow X$ is a closed subscheme with $|Z'|=Z$, we also have $E(Z') = \lim_{i} E(V_{i}\times_{X} Z')$.
As finite limits in $\Pro(\cC)$ are computed level-wise and formal completion is compatible with base change, this implies $E(X_{Z}^{\wedge}) = \lim_{i} E((V_{i})_{Z}^{\wedge})$ and similarly for $E(Y_{Z}^{\wedge})$. Replacing $X$ by $V_{i}$ and $Z$ by $V_{i} \cap Z$ we thus reduce to the case that $X$ is affine.

So assume now that $X$ is affine, say $X=\Spec(A)$. As $f$ is finite, also $Y$ is affine, say $Y=\Spec(B)$. Let $\phi\colon A \to B$ be the corresponding morphism of derived rings and write
 $J = \fib(A \to B)$. As $A$ and $B$ are connective, $J$ is $(-1)$-connective.
 By \cite[Cor.~5.2.2.2]{SAG} the $A$-algebra $B$ is almost perfect as an $A$-module, hence also $J$ is almost perfect as an $A$-module, i.e.~$\tau_{\leq n}J$ is a compact object in $\tau_{\leq n+1}\Mod(A)_{\geq -1}=\Mod(A)_{[-1,n]}$ for every $n$.\footnote{The $(n+1)$-truncated objects in $\Mod(A)_{\geq -1}$ are precisely the $n$-truncated, $(-1)$-connective objects in $\Mod(A)$ with respect to the standard t-structure. Hence the usual truncation $\tau_{\leq n}J$ coming from the t-structure is the categorical $(n+1)$-truncation $\Mod(A)_{\geq -1} \to \tau_{\leq n+1}\Mod(A)_{\geq -1}$.} 
 
Choose $f_{1}, \dots, f_{r} \in \pi_{0}(A)$ whose zero set is $Z$. Recall from \eqref{eq:completion-affine} that $X_{Z}^{\wedge}$ is then represented by the ind-derived scheme $\{ \Spec(A\mmod f_{1}^{\alpha}, \dots, f_{r}^{\alpha} ) \}_{\alpha\geq 1}$.
Consider the commutative diagram of derived rings
\begin{equation}
	\label{diag1}
\begin{tikzcd}
 A \ar[d]\ar[r] & A \mmod f_{1}^{\alpha}, \dots, f_{r}^{\alpha} \ar[d] \\ 
 B \ar[r] & B \mmod f_{1}^{\alpha}, \dots, f_{r}^{\alpha}.
\end{tikzcd}
\end{equation}
We claim that as pro-system in $\alpha$, this square is weakly cartesian. The map of vertical fibres (in $A$-modules) is the canonical map
\[
J \lto J \mmod f_{1}^{\alpha}, \dots, f_{r}^{\alpha},
\]
so we have to prove that this map is a weak equivalence as a pro-system in $\alpha$. For any $i = 1, \dots, r$, the fibre of the map of pro-systems $J \to \{J \mmod f_{i}^{\alpha}\}_{\alpha}$ is the pro-system 
\begin{equation}
	\label{eq:prosystem}
\{ \ J \stackrel{f_{i}}{\longleftarrow} J \stackrel{f_{i}}{\longleftarrow}\dots \ \}.
\end{equation}
We show below that this system is weakly contractible. We then get weak equivalences $J \xrightarrow{\simeq} \{J \mmod f_{1}^{\alpha}\}_{\alpha}$ and, taking derived quotients by powers of $f_{2}$,   $\{J \mmod f_{2}^{\alpha}\}_{\alpha} \xrightarrow{\simeq} \{J\mmod f_{1}^{\alpha}, f_{2}^{\alpha}\}_{\alpha}$. Composing with $J \xrightarrow{\simeq} \{J \mmod f_{2}^{\alpha}\}_{\alpha}$ we get the weak equivalence $J \xrightarrow{\simeq} \{J \mmod f_{1}^{\alpha}, f_{2}^{\alpha}\}_{\alpha}$ and continuing like this we finally arrive at the weak equivalence  $J \xrightarrow{\simeq} \{J \mmod f_{1}^{\alpha},\dots,  f_{r}^{\alpha}\}_{\alpha}$. So $\{\eqref{diag1}\}_{\alpha}$ is indeed weakly cartesian.

The assumption that $f$ be an isomorphism outside $Z$ implies that $J[f_{i}^{-1}] = 0$ for $i = 1, \dots, r$.
Note that 
\[
J[f_{i}^{-1}] = \colim( J \xrightarrow{f_{i}} J \xrightarrow{f_{i}} J \xrightarrow{f_{i}} \dots).
\]
As the standard t-structure on $\Mod(A)$ is compatible with filtered colimits, we have
\[
0 = \tau_{\leq n} J[f_{i}^{-1}] = \colim( \tau_{\leq n} J \xrightarrow{f_{i}} \tau_{\leq n} J \xrightarrow{f_{i}} \tau_{\leq n}  J \xrightarrow{f_{i}} \dots)
\]
As $\tau_{\leq n}J$ is compact in $\Mod(A)_{[-1,n]}$, we have
\[
0 = \pi_{0}(\map(\tau_{\leq n}J, \tau_{\leq n}J[f_{i}^{-1}]) ) \cong \colim \pi_{0}( \map(\tau_{\leq n}J, \tau_{\leq n}J))
\]
which means that there is an $N$ such that the power $f_{i}^{N}$ acts nullhomotopically on $\tau_{\leq n} J$. It follows that \eqref{eq:prosystem} is weakly contractible, and hence the pro-system of squares $\{\text{\eqref{diag1}}\}_{\alpha}$ is indeed weakly cartesian. 

Note that for every $\alpha$ the canonical map $B \otimes_{A} (A \mmod f_{1}^{\alpha}, \dots, f_{r}^{\alpha}) \to B \mmod f_{1}^{\alpha}, \dots, f_{r}^{\alpha}$ is an equivalence. Thus  we may apply the variant of \cite[Thm.~2.32]{LT} for $\ell$-connective localizing invariants to deduce that $\{\text{\eqref{diag1}}\}_{\alpha}$ induces a weakly cartesian square of $E$-theory pro-spectra.

\end{proof}

\begin{rem}
	\label{rem:pro-descent-for-class-fp}
 In Proposition~\ref{prop:descent-finite-modification} one can actually relax the finiteness assumption if one adds other hypotheses: As the proof shows, we only need that the pro-systems \eqref{eq:prosystem} are weakly contractible for each $i$ (using notation of the proof). This is satisfied if, on each truncation $\tau_{\leq n}J$, some power of each $f_{i}$ acts null-homotopically.

If $X$ and $Y$ are $n$-truncated, then also $J$ is $n$-truncated. It is then enough to assume that $J$ is perfect to order $n$ in the sense of \cite[Def.~2.7.0.1]{SAG} in order to conclude that the pro-systems \eqref{eq:prosystem} are weakly contractible.

As a special case, if the map $f$ in Proposition~\ref{prop:descent-finite-modification} is a closed immersion of classical qcqs schemes which is classically finitely presented, then the conclusion of the proposition holds, i.e.~\eqref{diag:1111} is weakly cartesian. On the other hand, we cannot drop the finite presentation assumption, as the following example shows.
\end{rem}

\begin{ex}
Let $A$ be a discrete, commutative ring containing an element $f$ and a non-trivial ideal $J$ such that $J[f^{-1}]=0$ and $J$ is $f$-divisible. Note that this implies that $J^{2} =0$. For example, take $A = \Z \oplus \Q_{p}/\Z_{p}$, $f=p$, $J = \Q_{p}/\Z_{p}$.

The closed immersion $Y = \Spec(A/J) \to X= \Spec(A)$ is then an isomorphism outside $Z = V(f)$. We claim that the square
\begin{equation}
	\label{diag:non-fp-procdh}
	\begin{tikzcd}
	K(X) \ar[d]\ar[r] & K(X_{Z}^{\wedge})  \ar[d] \\ 
	K(Y) \ar[r] &  K(Y_{Z}^{\wedge})
	\end{tikzcd}
\end{equation}
is not weakly cartesian. 

Aiming at a contradiction, assume \eqref{diag:non-fp-procdh} is weakly cartesian.
Fix some positive integer $n$. As $J$ is $f$-divisible, 
\[
J \mmod f^{n} = \fib(A\mmod f^{n} \to (A/J)\mmod f^{n})
\]
is 1-connective. As a consequence, the map of $K$-theory spectra
\[
K(A\mmod f^{n}) \to K((A/J)\mmod f^{n})
\]
is 2-connective. So also the right vertical map, and hence the left vertical map in \eqref{diag:non-fp-procdh} is 2-connective. In particular, the map $K_{1}(A) \to K_{1}(A/J)$ is an isomorphism. However, its retract $A^{*} \to (A/J)^{*}$ is not injective as every $1+x$ with $x \in J$ is an element of its kernel.
\end{ex}

\begin{rem}
 There is a version of the above Proposition for stacks: Let $X$ be a qcqs ANS derived algebraic stack  \cite[A.1]{BachmannCatMilnor}, $Y \to X$ a finite, locally almost finitely presented morphism of derived algebraic stacks, and $Z \hookrightarrow X$ a closed immersion with quasi-compact open complement. Let $E$ be any connected localizing invariant in the sense of \cite[Def.~C.1.3]{BachmannCatMilnor} (e.g.~a 2-connective or a finitary 1-connective localizing \cite[Rem.~C.1.5]{BachmannCatMilnor}). Then the square  \eqref{diag:1111} of pro-spectra is weakly cartesian.
	
	Indeed, the proof of \cite[Thm.~4.2.1]{BachmannCatMilnor} works with the following changes:  As in the proof of Lemma~2.3.2 in \opcit, the proof of our Proposition~\ref{prop:descent-finite-modification} shows that the formally completed square
	\begin{equation} \label{diag:stacks}
	\begin{tikzcd}
	 Y_{Z}^{\wedge} \ar[d]\ar[r] & Y \ar[d] \\ 
	 X_{Z}^{\wedge} \ar[r] & X 
	\end{tikzcd}
	\end{equation}
	is weakly cocartesian. As in the proof of Theorem~2.4.1 of \opcit~this implies that the square of derived (pro-)categories induced by \eqref{diag:stacks} is weak pro-Milnor and satisfies weak pro-base change in the sense of Definitions~C.2.4 and C.2.6 there. Hence by Theorem~C.3.1 there the square \eqref{diag:1111} is weakly cartesian.
\end{rem}

We now come to our main descent result, which in particular includes Theorem~\ref{thmA}.  

\begin{thm}\label{thm:pro-cdh}
Let $E$ be a localizing invariant of small $k$-linear $\infty$-categories  which is $\ell$-connective for some integer $\ell$.
Let $X$ be a qcqs derived $k$-scheme, $U \subseteq X$ a quasi-compact open subscheme, and denote by $Z$ the closed subset $X \setminus U$.
Let $f\colon Y \to X$ be a  locally almost finitely presented $U$-modification  of $X$. Then the square of pro-spectra
\begin{equation}\label{diag:2222}
\begin{tikzcd}
 E(X) \ar[d]\ar[r] & E(X_{Z}^{\wedge}) \ar[d] \\ 
 E(Y) \ar[r] & E(Y_{Z}^{\wedge})
\end{tikzcd}
\end{equation}
is weakly cartesian.
\end{thm}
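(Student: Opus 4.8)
The plan is to follow the strategy of \cite{KST} recalled at the start of this section: reduce to the special cases of derived blowups (Proposition~\ref{prop:descent-derived-bu}) and of finite lafp morphisms (Proposition~\ref{prop:descent-finite-modification}), and glue them using the structure result Theorem~\ref{thm:structure-abu}. First I would reduce to the case that $X$ is affine: since $E$ satisfies Zariski descent, since finite limits in $\Pro(\cC)$ are computed levelwise, and since formal completion commutes with base change, the same Zariski-descent argument as in the beginning of the proof of Proposition~\ref{prop:descent-finite-modification} reduces us to the case $X=\Spec(A)$ affine (here one only uses it to make the \emph{base} affine; $Y$ need not become affine). Then $X$ carries the ample line bundle $\cO_X$, and $\cla f$ is proper and locally of finite presentation, hence classically finitely presented, so Theorem~\ref{thm:structure-abu} applies to $f$.

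Next I would set up a bookkeeping device. For a morphism $a\colon W_1\to W_2$ of qcqs derived $X$-schemes that is an isomorphism over $U$ — so that the preimage of $Z$ in each $W_i$ has quasi-compact open complement and the formal completions $(W_i)_Z^\wedge$ are defined — write $\square(a)$ for the commutative square with rows $E(W_2)\to E((W_2)_Z^\wedge)$ and $E(W_1)\to E((W_1)_Z^\wedge)$ and vertical maps induced by $a$, so that the square \eqref{diag:2222} to be proven weakly cartesian is exactly $\square(f)$, and $\square(a)$ is weakly cartesian iff its total fibre is weakly contractible. For composable such $a\colon W_1\to W_2$ and $b\colon W_2\to W_3$, the fibres of the left (resp.\ right) vertical maps of $\square(b)$, $\square(b\circ a)$, $\square(a)$ form a fibre sequence, and taking fibres of the induced map of rows gives a fibre sequence of total fibres $\mathrm{tfib}\,\square(b)\to\mathrm{tfib}\,\square(b\circ a)\to\mathrm{tfib}\,\square(a)$. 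Since weak contractibility of pro-spectra is stable under extensions (the calculus of weak equivalences of \cite[\S2]{LT}), it follows that whenever two of $\square(a)$, $\square(b)$, $\square(b\circ a)$ are weakly cartesian, so is the third.

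Then I would apply Theorem~\ref{thm:structure-abu} to obtain a commutative square with $g\colon Y'\to Y$, $f\colon Y\to X$ on one side, $h\colon Y'\to\widetilde X$, $p\colon\widetilde X\to X$ on the other, and $f\circ g=p\circ h$, in which $p$ is a derived blowup with center a quasi-smooth closed subscheme $S\hookrightarrow X$ set-theoretically contained in $Z$; $h$ is a closed immersion which, $f$ being lafp, is again lafp and an isomorphism over $U$; and $k':=(h,g)\colon Y'\to\widetilde X\times_X Y$ is a closed immersion, locally of finite presentation, and an isomorphism over $U$. Since derived blowups are stable under base change, $p_Y:=\pr_Y\colon\widetilde X\times_X Y\to Y$ is the derived blowup of $Y$ in the quasi-smooth closed subscheme $S\times_X Y$, which is set-theoretically contained in $f^{-1}(Z)$, and $g=p_Y\circ k'$. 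Now $\square(p)$ and $\square(p_Y)$ are (even) cartesian by Proposition~\ref{prop:descent-derived-bu}, and $\square(h)$ and $\square(k')$ are weakly cartesian by Proposition~\ref{prop:descent-finite-modification}, since $h$ and $k'$ are closed immersions, hence finite, and are lafp isomorphisms over $U$. Applying the two-out-of-three principle of the previous paragraph successively, I would conclude that $\square(g)=\square(p_Y\circ k')$ is weakly cartesian, then that $\square(f\circ g)=\square(p\circ h)$ is weakly cartesian, and finally that $\square(f)$ is weakly cartesian — the assertion.

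The substantive content is packaged into Theorem~\ref{thm:structure-abu} and Propositions~\ref{prop:descent-derived-bu} and~\ref{prop:descent-finite-modification}, so the main obstacle is really the bookkeeping: I expect the delicate points to be checking that the auxiliary morphisms produced by Theorem~\ref{thm:structure-abu} genuinely satisfy the hypotheses of those propositions — that $h$ stays lafp and an isomorphism over $U$, that the centers of $p$ and of $p_Y$ lie set-theoretically in $Z$ and $f^{-1}(Z)$ respectively, and that all the complements remain quasi-compact so that the formal completions appearing are the expected ones — together with making sure that \cite[\S2]{LT} indeed supplies the two-out-of-three for weakly cartesian squares of pro-spectra.
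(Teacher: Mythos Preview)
Your argument is correct and in fact slightly more direct than the paper's own proof. The key difference is in how you handle the morphism $g\colon Y'\to Y$. The paper does not factor $g$; instead it first proves the theorem under the extra hypothesis that $Y$ carries an $f$-ample line bundle: from Propositions~\ref{prop:descent-derived-bu} and~\ref{prop:descent-finite-modification} one gets that the composite
\[
E(X,X_{Z}^{\wedge}) \to E(Y,Y_{Z}^{\wedge}) \to E(Y',Y'^{\wedge}_{Z})
\]
is a weak equivalence, so the first map is a weak retract; one then applies Theorem~\ref{thm:structure-abu} a second time, this time to $g$ (using the ample line bundle on $Y$), to see that the second map is also a weak retract, hence both are weak equivalences. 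The general case is then reduced to this one by a further application of Theorem~\ref{thm:structure-abu}, using the $(p\circ h)$-ample line bundle on $Y'$.

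You bypass this retract trick by observing that $g$ already comes factored: since derived blowups are stable under base change, $p_{Y}\colon \widetilde X\times_{X}Y\to Y$ is the derived blowup of $Y$ in $S\times_{X}Y$, and $g=p_{Y}\circ k'$ with $k'=(h,g)$ a closed immersion which Theorem~\ref{thm:structure-abu} guarantees to be locally of finite presentation and an isomorphism over $U$. Thus Propositions~\ref{prop:descent-derived-bu} and~\ref{prop:descent-finite-modification} apply directly to $p_{Y}$ and $k'$, and a single two-out-of-three finishes. This uses exactly the same ingredients but needs only one invocation of Theorem~\ref{thm:structure-abu} and no separate treatment of the ample case; the paper's route, on the other hand, is perhaps more robust in that it never needs to analyse $\widetilde X\times_{X}Y$ or the map $k'$ explicitly.
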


If $X$ and $Y$ are Noetherian classical schemes, and $f$ is classically of finite type, then $f$ is lafp (see \ref{subs:finiteness-conditions}). Moreover, the formal derived completions are equivalent to the classical formal completions (see \ref{subs:formal-completion}). We thus recover the classical pro-cdh descent statement as formulated (for $K$-theory) for example in \cite[Thm.~A]{KST}.

\begin{proof}[Proof of Theorem~\ref{thm:pro-cdh}]
We first prove the theorem under the additional assumption that $Y$ carries an $f$-ample line bundle.
Exactly as in the proof of Proposition~\ref{prop:descent-finite-modification} we reduce to the case that $X$ is affine.
Then $Y$ carries an ample line bundle.

By Theorem~\ref{thm:structure-abu}, we find an lafp $U$-modification $g\colon Y' \to Y$ such that $f\circ g$ factors as an lafp, closed $U$-modification $h\colon Y' \hookrightarrow \widetilde X$ followed by a derived blowup $p\colon \widetilde X \to X$ with center set-theoretically contained in $Z$. Both of these are isomorphisms over $U$. By Proposition~\ref{prop:descent-derived-bu} and Proposition~\ref{prop:descent-finite-modification}, the maps of relative $E$-theory pro-spectra
 \[
 E(X,X_{Z}^{\wedge}) \to E(\widetilde X, \widetilde X_{Z}^{\wedge}) \to E(Y', {Y'}_{Z}^{\wedge})
 \]
are weak equivalences. Hence also the composite
\begin{equation}\label{eq:composite}
E(X,X_{Z}^{\wedge}) \to E(Y, Y_{Z}^{\wedge}) \to E(Y', {Y'}_{Z}^{\wedge})
\end{equation}
is a weak equivalence. It follows that $E(X,X_{Z}^{\wedge}) \to E(Y, Y_{Z}^{\wedge})$ is the inclusion of a direct summand (in the weak sense). As $Y$ carries an ample line bundle, we can repeat this argument for the $U$-modification $Y' \to Y$. So also the second map in \eqref{eq:composite} is the inclusion of a direct summand. It now follows that both maps are in fact weak equivalences.

We now prove the theorem for general $Y$.
As before, we may assume that $X$ is affine. By Theorem~\ref{thm:structure-abu} again, there exists an lafp $U$-modification $g \colon Y' \to Y$ such that $Y'$ carries an ample line bundle relative to $X$. As $X$ is affine, this line bundle is in fact ample and it is also ample relative to $Y$. Hence, by Step 1, the maps of pro-spectra $E(X, X_{Z}^{\wedge}) \to E(Y', {Y'}_{Z}^{\wedge})$ and $E(Y, Y_{Z}^{\wedge}) \to E(Y', {Y'}_{Z}^{\wedge})$ both are weak equivalences. By 3-for-2, also $E(X, X_{Z}^{\wedge}) \to E(Y, Y_{Z}^{\wedge})$ is a weak equivalence, as desired.
\end{proof}

\section{Pro-cdh descent for the cotangent complex and motivic cohomology}

We fix some base ring $k$ (e.g., $k = \Z$).
For $X$ a qcqs derived $k$-scheme, we denote by $L_{X} \in \QCoh(X)$ its (algebraic) cotangent complex relative to $k$, and by $L^{i}_{X}$ its $i$-th derived exterior power ($i\geq 0$). If $X =\Spec(A)$ is affine, we also write $L^{i}_{A}$ for the $A$-module $\Gamma(X, L^{i}_{X})$ corresponding to $L^{i}_{X}$. If $Y \to X$ is a morphism of  qcqs derived $k$-schemes, we denote by $L_{Y/X}$ its relative cotangent complex and by $L^{i}_{Y/X}$ its derived exterior powers. Similarly for a morphism of derived $k$-algebras $A \to B$.

If $Z \subseteq |X|$ is a closed subset with quasi-compact open complement, we obtain a \emph{pro-completion along $Z$} functor
\[
(-)_{Z}^{\wedge} \colon \QCoh(X) \to \Pro(\QCoh(X))
\]
by pulling back to the ind-derived scheme $X_{Z}^{\wedge}$ and pushing forward. The usual formal completion along $Z$ is given by composing the above functor with $\lim\colon \Pro(\QCoh(X)) \to \QCoh(X)$.

\begin{lemma}
	\label{lem:pro-completion-aperf}
Let $X = \Spec(A)$ be an affine derived $k$-scheme, $Z \subseteq |X|$ a closed subset with quasi-compact open complement. Let $M \in \Mod(A \on Z)^{\aperf}$ be an almost perfect $A$-module supported on $Z$. Then the canonical map
\[
M \to M_{Z}^{\wedge}
\]
is a weak equivalence in $\Pro(\Mod(A))$.
\end{lemma}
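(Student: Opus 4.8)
The plan is to adapt the argument in the proof of Proposition~\ref{prop:descent-finite-modification}: there the assertion is established for the specific module $J = \fib(A \to B)$, and the only inputs used are that $J$ is almost perfect and that $J[f_i^{-1}] = 0$ for a suitable finite set of elements $f_i \in \pi_0(A)$. One runs that argument with $M$ in place of $J$. Since $|X| \setminus Z$ is quasi-compact, one first chooses finitely many $f_1, \dots, f_r \in \pi_0(A)$ with common zero locus $Z$, so that by \eqref{eq:completion-affine} the pro-object $M_Z^{\wedge}$ is represented by $\{ M \mmod f_1^{\alpha}, \dots, f_r^{\alpha} \}_{\alpha \geq 1}$, where $M \mmod f_1^{\alpha}, \dots, f_r^{\alpha} \simeq M \otimes_A (A \mmod f_1^{\alpha}) \otimes_A \cdots \otimes_A (A \mmod f_r^{\alpha})$. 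As the $D(f_i)$ cover $X \setminus Z$ and $M$ is supported on $Z$, we have $M[f_i^{-1}] = 0$ for every $i$.

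The heart of the matter is the one-variable case. For fixed $i$, the fibre (taken levelwise in $\Pro(\Mod(A))$) of the canonical map from the constant pro-object $M$ to $\{M \mmod f_i^{\alpha}\}_{\alpha}$ is the pro-system $\{\, M \xleftarrow{f_i} M \xleftarrow{f_i} \cdots \,\}$ coming from the cofibre sequences $M \xrightarrow{f_i^{\alpha}} M \to M \mmod f_i^{\alpha}$, just as in \eqref{eq:prosystem}, and the claim is that this pro-system is weakly contractible. This is where almost perfectness enters. Write $k$ for an integer such that $M$ is $k$-connective; then for every $n$ the truncation $\tau_{\leq n} M$ is a compact object of $\Mod(A)_{[k,n]}$. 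Since the standard t-structure on $\Mod(A)$ is compatible with filtered colimits and $M[f_i^{-1}] = \colim(M \xrightarrow{f_i} M \xrightarrow{f_i} \cdots)$, one has $\tau_{\leq n} M[f_i^{-1}] = \colim(\tau_{\leq n} M \xrightarrow{f_i} \cdots) = 0$ in $\Mod(A)_{[k,n]}$, and compactness then gives $\colim \pi_0(\map(\tau_{\leq n} M, \tau_{\leq n} M)) = \pi_0(\map(\tau_{\leq n} M, \tau_{\leq n} M[f_i^{-1}])) = 0$. Hence some power $f_i^{N}$ acts null-homotopically on $\tau_{\leq n} M$, so $\{\tau_{\leq n} M \xleftarrow{f_i} \cdots\}$ is pro-zero; since $n$ was arbitrary, $\{M \xleftarrow{f_i} \cdots\}$ is weakly contractible.

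Finally one passes from one to $r$ variables exactly as in the cited proof: weak contractibility of the $f_1$-fibre gives a weak equivalence $M \xrightarrow{\simeq} \{M \mmod f_1^{\alpha}\}_{\alpha}$; applying the exact functors ``derived quotient by $f_2^{\alpha}$'' levelwise (these preserve weak equivalences of pro-modules, being exact and changing connectivity only by a bounded amount) and using that $M \mmod f_1^{\alpha}$ is again almost perfect---it is the tensor product of $M$ with the perfect $A$-module $A \mmod f_1^{\alpha}$---and is still supported on $Z$, one obtains $\{M \mmod f_2^{\alpha}\}_{\alpha} \xrightarrow{\simeq} \{M \mmod f_1^{\alpha}, f_2^{\alpha}\}_{\alpha}$; composing with $M \xrightarrow{\simeq} \{M \mmod f_2^{\alpha}\}_{\alpha}$ and iterating over $f_3, \dots, f_r$ yields $M \xrightarrow{\simeq} M_Z^{\wedge}$. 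The steps needing the most care are the compactness argument above that produces the null-homotopy of $f_i^{N}$ and the bookkeeping in this last reduction---keeping track of the \emph{a priori} multiply-indexed pro-systems and verifying that the levelwise derived quotients do preserve weak equivalences; everything else is formal, and independence of the choice of the $f_i$ is immediate from the universal property of $M_Z^{\wedge}$.
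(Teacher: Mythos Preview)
Your proof is correct and follows exactly the approach the paper takes: the paper's own proof of this lemma is the single sentence ``This was proven in the proof of Proposition~\ref{prop:descent-finite-modification} (replace $J$ there by $M$),'' and you have simply spelled out that substitution in detail. The only superfluous step is your remark that $M \mmod f_1^{\alpha}$ is again almost perfect and supported on $Z$---this is true but not needed, since (as you also note) the levelwise derived quotient by $f_2^{\alpha}$ preserves weak equivalences directly.
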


\begin{proof}
This was proven in the proof of Proposition~\ref{prop:descent-finite-modification} (replace $J$ there by $M$).
\end{proof}

For a pro-system of derived rings $\{ A(\alpha)\}_{\alpha}$ we denote by $\Pro(\Mod)(\{A(\alpha)\})$ the $\infty$-category of pro-systems of modules over the pro-ring $\{A(\alpha)\}_{\alpha}$ (see \cite[\S 2.4]{LT} for a precise definition).

\begin{lemma}
	\label{lem:relative-pro-cotangent}
In the situation of the previous lemma, choose $f_{1}, \dots, f_{r} \in \pi_{0}(A)$ defining $Z$, and write $A(\alpha) = A \mmod f_{1}^{\alpha}, \dots, f_{r}^{\alpha}$. Then the pro-system
\[
\{ L_{A(\alpha)/A} \}_{\alpha} \in \Pro(\Mod)(\{A(\alpha)\})
\]
vanishes. In fact, all transition maps in this pro-system are null-homotopic.
\end{lemma}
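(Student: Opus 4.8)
Here is the plan.

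First I would rewrite $A(\alpha)=A\mmod f_1^\alpha,\dots,f_r^\alpha$ (cf.~\eqref{eq:completion-affine}) as a pushout of derived $A$-algebras,
\[
A(\alpha)\;\simeq\; A\otimes_{A[t_1,\dots,t_r]}A,
\]
where the two $A[t_1,\dots,t_r]$-algebra structures on $A$ are given by $t_i\mapsto f_i^\alpha$ and by the augmentation $t_i\mapsto 0$. The point of working over $A[t_1,\dots,t_r]$ (rather than the $\Z[t_1,\dots,t_r]$-presentation of \eqref{eq:completion-affine}) is that the transition map $A(\alpha+1)\to A(\alpha)$ of the pro-system is then induced by an honest map of such pushout squares: namely the identity on the two copies of $A$ together with the endomorphism $t_i\mapsto f_it_i$ of $A[t_1,\dots,t_r]$ (the two resulting triangles commute, since $f_it_i$ is sent to $f_i^{\alpha+1}$ and to $0$ respectively). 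Base change for the cotangent complex applied to this square gives $L_{A(\alpha)/A}\simeq A(\alpha)\otimes_A L_{A/A[t_1,\dots,t_r]}$, the second factor being taken relative to the augmentation; and since $A$ is the quotient of $A[t_1,\dots,t_r]$ by the regular sequence $t_1,\dots,t_r$, this is a free module $L_{A(\alpha)/A}\simeq\bigoplus_{i=1}^r A(\alpha)\cdot dt_i\,[1]$ of rank $r$, concentrated in homological degree $1$.

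Next I would track the transition maps through this identification. The endomorphism $t_i\mapsto f_it_i$ sends $dt_i$ to $d(f_it_i)=f_i\,dt_i$, because the a priori term $t_i\,df_i$ vanishes: $f_i\in A$, so $df_i=0$ relative to $A$. Hence the transition map $L_{A(\alpha+1)/A}\to L_{A(\alpha)/A}$ is, on the $i$-th summand, multiplication by $f_i$ followed by the ring transition map. Composing $N$ consecutive transition maps, and using that multiplication by the $f_i$ commutes with the ring transition maps, the composite $L_{A(\alpha+N)/A}\to L_{A(\alpha)/A}$ is the ring transition followed by $\bigoplus_i(f_i^N\cdot)$ on the $A(\alpha)$-module $L_{A(\alpha)/A}$. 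Now if $N\geq\alpha$ then $f_i^N\in(f_1^\alpha,\dots,f_r^\alpha)$, so $f_i^N$ maps to $0$ in $\pi_0(A(\alpha))$ and therefore acts as the zero morphism on the $A(\alpha)$-module $L_{A(\alpha)/A}$. Thus every transition $L_{A(\beta)/A}\to L_{A(\alpha)/A}$ with $\beta\geq 2\alpha$ is null-homotopic, which shows that the pro-system $\{L_{A(\alpha)/A}\}_\alpha$ vanishes; restricting to the cofinal subsystem indexed by $\{2^n\}_{n\geq 0}$ then makes all transition maps null-homotopic, as asserted.

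The one delicate point — and what I expect to be the main obstacle — is pinning down the transition maps correctly. ``Multiplication by $f_i$'' is not a morphism over $\Z[t_1,\dots,t_r]$, so one really must base change to $A$ first in order to realise the pro-system's transition maps as induced by an actual map of pushout diagrams; once that is done, the key simplification — that the obstruction term $t_i\,df_i$ is zero precisely because the cotangent complex is taken relative to $A$ — makes everything explicit, and the remaining steps (the cotangent complex of a Koszul/quasi-smooth quotient, and base change for the cotangent complex) are routine.
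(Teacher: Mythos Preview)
Your proof is correct and closely parallels the paper's, but with one meaningful difference in setup. The paper works over the base ring $k$ rather than $A$: writing $R=k[T_1,\dots,T_r]$ and $g_\alpha\colon R\to A$, $T_i\mapsto f_i^\alpha$, one has $A(\alpha)\simeq k\otimes_{R,g_\alpha}A$ and hence $L_{A(\alpha)/A}\simeq L_{k/R}\otimes_k A(\alpha)$, so it suffices to show that the pro-system $\{L_{k/R}\}_\alpha$ in $\Pro(\Mod(k))$, with transitions induced by $T_i\mapsto T_i^\beta$, vanishes. There the transition sends $dT_i\mapsto \beta T_i^{\beta-1}\,dT_i$, which is already zero in $\Omega^1_{R/k}\otimes_R k$ for any $\beta>1$; no iteration and no appeal to nilpotence of $f_i$ in $\pi_0 A(\alpha)$ is needed. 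Your choice to work over $A$ is what forces you to realise the transitions via $t_i\mapsto f_i t_i$ (as you correctly observe, this endomorphism is not available over $\Z[t_1,\dots,t_r]$), and then to iterate and invoke $f_i^N=0$ in $\pi_0 A(\alpha)$; the paper's route sidesteps both steps at the price of a multiplicative reindexing of the pro-system. Both arguments are sound; the paper's is marginally slicker, while yours is more explicit about the consecutive transition maps.
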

\begin{proof}
This follows by base change from the universal case: Let $R = k[T_{1}, \dots, T_{r}]$ be the polynomial ring over $k$, let $R \to k$ be the map sending all $T_{i}$ to $0$, and let $g_{\alpha}\colon R \to A$ be the map sending $T_{i}$ to $f_{i}^{\alpha}$. Then $A(\alpha) = k \otimes_{R, g_{\alpha}} A$ and consequently
\[
L_{A(\alpha)/A} \simeq L_{k/R} \otimes_{R, g_{\alpha}} A \simeq L_{k/R} \otimes_{k} A(\alpha).
\]
Consider the pro-system $\{L_{k/R}\}_{\alpha}\in \Pro(\Mod(k))$ whose transition maps are induced by the maps $R \to R$ sending the $T_{i}$ to $T_{i}^{\beta}$. Then 
\[
\{ L_{A(\alpha)/A} \}_{\alpha} \simeq \{L_{k/R}\}_{\alpha} \otimes_{k} \{A(\alpha)\}_{\alpha}.
\]
Hence it suffices to prove that all transition maps $L_{k/R} \to L_{k/R}$ are null-homotopic in $\Mod(k)$.
As $R \to k$ has a section, the transitivity triangle yields an equivalence $L_{k/R} \simeq \Sigma L_{R/k} \otimes_{R} k$.
As $R$ is a polynomial ring, $L_{R/k}$ is discrete, given by the module of Kähler differentials $\Omega^{1}_{R/k} = \bigoplus_{i} RdT_{i}$.  The map  $T_{i} \mapsto T_{i}^{\beta}$ induces $dT_{i} \mapsto \beta T_{i}^{\beta-1}dT_{i}$ in $\Omega^{1}_{R/k}$, and hence the zero map in $\Omega^{1}_{R/k} \otimes_{R} k \simeq \Sigma^{-1}L_{k/R}$ for $\beta>1$.  This proves our claim.
\end{proof}

\begin{lemma}
	\label{lem:absolute-cotangent-completion}
In the situation of Lemma~\ref{lem:relative-pro-cotangent}, the canonical map
\[
\{ L^{i}_{A} \otimes_{A} A(\alpha)\}_{\alpha} \to \{ L^{i}_{A(\alpha)}\}_{\alpha}
\]
is an equivalence in $\Pro(\Mod)(\{A(\alpha)\})$.
\end{lemma}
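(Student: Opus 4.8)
The plan is to deduce this from the transitivity sequence for the composite $k \to A \to A(\alpha)$ together with the vanishing established in Lemma~\ref{lem:relative-pro-cotangent}. First I would recall that for each fixed $\alpha$ the transitivity cofibre sequence reads
\[
L_{A}\otimes_{A}A(\alpha) \lto L_{A(\alpha)} \lto L_{A(\alpha)/A}
\]
in $\Mod(A(\alpha))$. Passing to the pro-system in $\alpha$ and invoking Lemma~\ref{lem:relative-pro-cotangent}, the third term $\{L_{A(\alpha)/A}\}_\alpha$ is a pro-zero object (indeed all its transition maps are null-homotopic). Hence the first map is a weak equivalence of pro-modules, which is exactly the claim for $i=1$. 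One should be a little careful that $L^1 = L$ and $L^0 = \cO$ (for which the statement is the trivial identity $A(\alpha) \to A(\alpha)$), so these base cases are immediate.

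Next I would pass from $i=1$ to general $i$ using the décalage/Koszul filtration on the derived exterior powers of a cofibre sequence. Given the cofibre sequence $L_A \otimes_A A(\alpha) \to L_{A(\alpha)} \to L_{A(\alpha)/A}$, the $i$-th derived exterior power $L^i_{A(\alpha)}$ carries a finite filtration whose associated graded pieces are $L^j_{A}\otimes_A A(\alpha) \otimes_{A(\alpha)} L^{i-j}_{A(\alpha)/A}$ for $0 \le j \le i$ (this is the standard behaviour of $\LSym$/$\wedge$ applied to a cofibre sequence of connective modules; see e.g.~\cite[\S 25.2]{SAG} or the Illusie-style filtration). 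For $j < i$ each such graded piece involves $L^{i-j}_{A(\alpha)/A}$ with $i - j \geq 1$, and since the transition maps of $\{L_{A(\alpha)/A}\}_\alpha$ are null-homotopic, the same is true after applying the functor $L^{i-j}_{(-)}$ (a null-homotopic self-map of an object induces a null-homotopic self-map on any of its functorial exterior powers, as $L^m$ preserves zero maps on connective modules). Therefore all of the graded pieces with $j<i$ form pro-zero pro-systems, and the only surviving piece is $j = i$, namely $\{L^i_A \otimes_A A(\alpha)\}_\alpha$. Since finite limits/colimits of pro-systems are computed levelwise and a finite filtration whose graded pieces away from one are pro-zero forces the inclusion of that one piece to be a weak equivalence, we conclude that $\{L^i_A \otimes_A A(\alpha)\}_\alpha \to \{L^i_{A(\alpha)}\}_\alpha$ is a weak equivalence.

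Alternatively, and perhaps more cleanly, I would phrase the induction directly: assuming the result for all exponents $<i$, the filtration on $L^i_{A(\alpha)}$ has bottom piece $L^i_A\otimes_A A(\alpha)$ and all higher graded pieces a tensor product with $\{L^{m}_{A(\alpha)/A}\}_\alpha$ for some $m\ge 1$, which is pro-zero by (the exterior-power version of) Lemma~\ref{lem:relative-pro-cotangent}, so the map in question is a weak equivalence. The base case $i=1$ is the transitivity triangle as above (and $i=0$ is trivial). I expect the main technical obstacle to be making precise the behaviour of derived exterior powers on a cofibre sequence \emph{within the pro-category}: one must check that the Illusie filtration is compatible with the transition maps of the pro-system $\{A(\alpha)\}_\alpha$ and that ``pro-zero plus null-homotopic transition maps'' is inherited by $L^m_{(-)}$, so that the graded pieces genuinely vanish weakly rather than merely having vanishing limit. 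Once that bookkeeping is set up, the rest is a short dévissage along the finite filtration.
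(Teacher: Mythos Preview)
Your argument is correct and follows the same overall strategy as the paper: the case $i=1$ via the transitivity triangle together with Lemma~\ref{lem:relative-pro-cotangent}, then a passage to higher exterior powers. The paper's proof is considerably shorter for the second step, however. Rather than invoking the Illusie/Koszul filtration on $L^{i}_{A(\alpha)}$ and arguing that the off-diagonal graded pieces are pro-zero, the paper simply applies the derived $i$-th exterior power functor levelwise over $\{A(\alpha)\}$ to the pro-equivalence $\{L_{A}\otimes_{A}A(\alpha)\}\xrightarrow{\simeq}\{L_{A(\alpha)}\}$ already established for $i=1$. Since $\Lambda^{i}$ is functorial and commutes with base change, $\Lambda^{i}_{A(\alpha)}(L_{A}\otimes_{A}A(\alpha))\simeq L^{i}_{A}\otimes_{A}A(\alpha)$ and $\Lambda^{i}_{A(\alpha)}(L_{A(\alpha)})=L^{i}_{A(\alpha)}$, and the result follows immediately. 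This sidesteps precisely the bookkeeping worries you raise about making the filtration compatible with the pro-structure; your approach works too, but the direct application of $\Lambda^{i}$ to a pro-equivalence is cleaner and avoids the filtration altogether.
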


\begin{proof}
The transitivity triangle for the maps $A \to A(\alpha)$ and Lemma~\ref{lem:relative-pro-cotangent} imply the case $i=1$.  Passing to derived exterior powers over $\{A(\alpha)\}$ implies the general case.
\end{proof}

The following theorem was suggested by Matthew Morrow. It generalizes \cite[Thm.~2.4]{Mor} (see also \cite[Lemma~8.5]{ElmantoMorrow}).
\begin{thm}
	\label{thm:procdh-cotangent}
Let $X$ be a qcqs derived $k$-scheme, $U \subseteq X$ a quasi-compact open subscheme, and denote by $Z$ the closed subset $X \setminus U$.
Let $f\colon Y \to X$ be a  locally almost finitely presented $U$-modification  of $X$. Then for every $i\geq 0$, the square of pro-spectra (or pro-complexes)
\begin{equation}\label{diag:pro-cdh-L}
\begin{tikzcd}
 \Gamma(X, L^{i}_{X}) \ar[d]\ar[r] & \Gamma(X_{Z}^{\wedge}, L^{i}_{X_{Z}^{\wedge}}) \ar[d] \\ 
 \Gamma(Y, L^{i}_{Y}) \ar[r] & \Gamma(Y_{Z}^{\wedge}, L^{i}_{Y_{Z}^{\wedge}})
\end{tikzcd}
\end{equation}
is weakly cartesian.
\end{thm}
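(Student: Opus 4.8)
The plan is to follow the exact same strategy as in the proof of Theorem~\ref{thm:pro-cdh}, reducing the general case to the two special cases of derived blowups and finite modifications, via the structural result Theorem~\ref{thm:structure-abu}. First I would reduce to the case that $X$ is affine by Zariski descent: the functors $\Gamma(X, L^i_X)$ satisfy Zariski descent (they are the global sections of a quasi-coherent sheaf, and $L^i$ is compatible with localization), finite limits in $\Pro(\cC)$ are computed levelwise, and formal completion is compatible with base change, exactly as in the first paragraph of the proof of Proposition~\ref{prop:descent-finite-modification}. Once $X$ is affine, any $f$-ample line bundle on $Y$ becomes ample, and Theorem~\ref{thm:structure-abu} produces an lafp $U$-modification $g\colon Y' \to Y$ with $Y'$ carrying an ample line bundle, factoring $f\circ g$ as an lafp closed $U$-modification $h\colon Y'\hookrightarrow \widetilde X$ followed by a derived blowup $p\colon \widetilde X \to X$ with center in $Z$.

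The next step is to prove the two special cases. For the \emph{finite} (more generally closed lafp) case: if $f\colon Y=\Spec(B)\to X=\Spec(A)$ is a closed immersion that is lafp and an isomorphism outside $Z$, then writing $J=\fib(A\to B)$, which is almost perfect and supported on $Z$, Lemma~\ref{lem:pro-completion-aperf} says $J \xrightarrow{\simeq} J^\wedge_Z$ as a pro-object. Applying this to the transitivity/completion situation, together with Lemma~\ref{lem:absolute-cotangent-completion} (which identifies $\{L^i_{A(\alpha)}\}$ with $\{L^i_A\otimes_A A(\alpha)\}$ and similarly for $B$), one gets that the square \eqref{diag:pro-cdh-L} for $f$ has the map of vertical fibres being a pro-equivalence: the relative term along the top is $\{L^i_A\otimes_A A(\alpha)\}$, along the bottom $\{L^i_B\otimes_B B(\alpha)\}$, and the fibre of $L^i_A \to L^i_B$ (using that $\Gamma$ of exterior powers is computed by a filtration whose graded pieces are $L^j_B \otimes \Lambda^{i-j}L_{B/A}$, or more directly by a cofibre-sequence argument) is built from $B$-modules that are almost perfect and supported on $Z$, hence completion-invariant by Lemma~\ref{lem:pro-completion-aperf}. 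For the \emph{derived blowup} case: here I would use that $\Gamma(\widetilde X, L^i_{\widetilde X})$ decomposes compatibly with the semiorthogonal decomposition — more precisely, since $L_{\widetilde X / X}$ for a derived blowup in a quasi-smooth center is perfect and supported on the exceptional divisor $D$, the transitivity triangle and the projective bundle formula give $\Gamma(\widetilde X, L^i_{\widetilde X}) \simeq \Gamma(X, L^i_X) \oplus (\text{terms supported on } D \text{ pulled back from } S)$, mirroring \eqref{eq:semi1}; base-changing along $Z'\hookrightarrow X$ mirrors \eqref{eq:semi2}; and the $S$-supported terms are completion-invariant because $S$ is set-theoretically contained in $Z$ (so $S^\wedge_Z = S$), exactly as in the last paragraph of the proof of Proposition~\ref{prop:descent-derived-bu}.

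With the two special cases in hand, the endgame is identical to the proof of Theorem~\ref{thm:pro-cdh}: writing $G(X) := \Gamma(X,L^i_X)$ and $G(X,X^\wedge_Z)$ for the relative term, the composite $G(X,X^\wedge_Z) \to G(\widetilde X, \widetilde X^\wedge_Z) \to G(Y',{Y'}^\wedge_Z)$ is a weak equivalence by the derived-blowup and closed-modification cases, hence $G(X,X^\wedge_Z)\to G(Y,Y^\wedge_Z)$ is a weak direct summand inclusion; since $Y$ also carries an ample line bundle (as $X$ is affine and $f$-ample implies ample in that case) one repeats the argument for $Y'\to Y$ to make the second map a weak summand inclusion as well, forcing both to be weak equivalences. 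Then one removes the ample hypothesis on $Y$ exactly as in the last paragraph of the proof of Theorem~\ref{thm:pro-cdh}, by applying Theorem~\ref{thm:structure-abu} to get $Y'\to Y$ with $Y'$ carrying an ample line bundle and using 2-out-of-3.

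The main obstacle I expect is the derived-blowup case: unlike $K$-theory or general localizing invariants, $\Gamma(-,L^i_{-})$ is not obviously compatible with semiorthogonal decompositions, so I cannot literally invoke \cite[Thm.~C]{KhanSemiOrthogonal} and must instead argue directly via the transitivity triangle for $\widetilde X \to X$, the identification of $L_{\widetilde X/X}$ (or $L_{\dBl/X}$) for a derived blowup — which should be perfect, supported on $D$, and computable from $L_{D/S}$ and the Euler sequence on the projective bundle — and then the projective bundle / blowup formula to pull the $D$-supported contributions back to $S$. Verifying that this decomposition is compatible with base change along arbitrary $Z'\hookrightarrow X$ with $|Z'|=Z$ (so that it passes to the pro-system) and that the $S$-supported pieces are genuinely completion-invariant is the technical heart; once that is done, the rest is a routine transcription of the $K$-theory argument. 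An alternative, possibly cleaner route for this step is to deduce the $L^i$-statement from the $\THH$-statement (Theorem~\ref{thm:pro-cdh} applied to $E = \THH$, which is $0$-connective) together with the HKR filtration, since over $k$ one has $\gr^i \THH \simeq L^i[i]$; but this only works rationally or with care about torsion, so I would keep the direct argument as the primary approach.
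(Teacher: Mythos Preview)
Your approach would work, but it takes an unnecessarily circuitous route compared to the paper. The paper's proof is \emph{direct} and does not invoke Theorem~\ref{thm:structure-abu} at all: after reducing to $X=\Spec(A)$ affine, one uses Lemma~\ref{lem:absolute-cotangent-completion} (and an affine-cover argument on $Y$) to identify the square \eqref{diag:pro-cdh-L} with
\[
\begin{tikzcd}
L^i_A \ar[r]\ar[d] & L^i_A \otimes_A \{A(\alpha)\} \ar[d] \\
f_*L^i_Y \ar[r] & (f_*L^i_Y) \otimes_A \{A(\alpha)\}.
\end{tikzcd}
\]
Factoring the left vertical map through $f_*f^*L^i_A$ splits this into two squares. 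The first is $L^i_A\otimes_A(-)$ applied to the square built from $A\to f_*\cO_Y$; since $f$ is proper and lafp, $f_*\cO_Y$ is almost perfect, so the fibre of $A\to f_*\cO_Y$ lies in $\Mod(A\on Z)^{\aperf}$ and Lemma~\ref{lem:pro-completion-aperf} makes it weakly cartesian (tensoring with the connective $L^i_A$ preserves this). The second square has left vertical cofibre filtered, via the transitivity triangle, by $L^k_A\otimes_A f_*L^{i-k}_{Y/X}$ for $k<i$; since $f$ is lafp, $L_{Y/X}$ and its exterior powers are almost perfect, hence so are their pushforwards (properness again), and they are supported on $Z$ because $f$ is an isomorphism off $Z$ --- so Lemma~\ref{lem:pro-completion-aperf} applies once more.

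The key observation you are missing is that the two inputs you isolate for your ``closed lafp'' case --- almost-perfectness of $\fib(A\to B)$ and of $L_{B/A}$, both supported on $Z$ --- are already available for an \emph{arbitrary} proper lafp $U$-modification, with $f_*\cO_Y$ and $f_*L^{i-k}_{Y/X}$ playing the corresponding roles. This renders the structural theorem, the separate derived-blowup case (which you correctly flag as the delicate part of your plan, since $\Gamma(-,L^i_{-})$ does not see semiorthogonal decompositions), and the direct-summand endgame all superfluous. Your fallback via $\THH$ and the HKR filtration would, as you note, need characteristic-zero or flatness hypotheses and is best avoided.
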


Here, similarly as in the previous section,  $\Gamma(X_{Z}^{\wedge}, L^{i}_{X_{Z}^{\wedge}})$ denotes the pro-object $\{ \Gamma(Z', L^{i}_{Z'}) \}_{Z' \hookrightarrow X}$
where $Z' \hookrightarrow X$ runs through all closed immersions of derived schemes with $|Z'| = Z$.

\begin{proof}
By Zariski descent we may assume that $X= \Spec(A)$ is affine. Choose $f_{1}, \dots, f_{r} \in \pi_{0}(A)$ defining $Z$, and write $A(\alpha) = A \mmod f_{1}^{\alpha}, \dots, f_{r}^{\alpha}$ so that $X_{Z}^{\wedge} = \{\Spec(A(\alpha))\}_{\alpha\geq 1}$. Abusing notation slightly, we also write $f_{*}L^{i}_{Y} \in \Mod(A)$ for the module $\Gamma(X, f_{*}L^{i}_{Y}) = \Gamma(Y, L^{i}_{Y})$ corresponding to $f_{*}L^{i}_{Y} \in \QCoh(X)$.

By Lemma~\ref{lem:absolute-cotangent-completion} we  have
\[
\Gamma(X_{Z}^{\wedge}, L^{i}_{X_{Z}^{\wedge}}) \simeq L^{i}_{A} \otimes_{A} \{ A(\alpha)\}.
\]
Similarly, using  affine coverings of $Y$ and the usual induction we get
\[
\Gamma(Y_{Z}^{\wedge}, L^{i}_{Y_{Z}^{\wedge}}) \simeq (f_{*}L^{i}_{Y}) \otimes_{A} \{A(\alpha)\}.
\]
So we have to prove that
\begin{equation}\label{eq:cot1}
\begin{tikzcd}
 L^{i}_{A} \ar[d]\ar[r] & L^{i}_{A} \otimes_{A} \{ A(\alpha)\} \ar[d] \\ 
 f_{*}L^{i}_{Y} \ar[r] & (f_{*}L^{i}_{Y}) \otimes_{A} \{A(\alpha)\} 
\end{tikzcd}
\end{equation}
is weakly cartesian. The  map $L^{i}_{A} \to f_{*}L^{i}_{Y}$ factors as $L^{i}_{A} \to f_{*}f^{*}L^{i}_{A} \to f_{*}L^{i}_{Y}$, so it suffices to show that the two squares
\begin{equation}\label{eq:cot2}
\begin{tikzcd}
 L^{i}_{A} \ar[d]\ar[r] & L^{i}_{A} \otimes_{A} \{ A(\alpha)\} \ar[d] \\ 
 f_{*}f^{*}L^{i}_{A} \ar[r] & (f_{*}f^{*}L^{i}_{A}) \otimes_{A} \{A(\alpha)\} 
\end{tikzcd}
\text{ and }
\begin{tikzcd}
 f_{*}f^{*}L^{i}_{A} \ar[d]\ar[r] & (f_{*}f^{*}L^{i}_{A}) \otimes_{A} \{ A(\alpha)\} \ar[d] \\ 
 f_{*}L^{i}_{Y} \ar[r] & (f_{*}L^{i}_{Y}) \otimes_{A} \{A(\alpha)\} 
\end{tikzcd}
\end{equation}
are weakly cartesian. 

We first treat the left one. Using the projection formula, we rewrite that square as the tensor product of $L^{i}_{A}$ with the square
\begin{equation}\label{eq:cot3}
\begin{tikzcd}
 A \ar[d]\ar[r] & \{A(\alpha)\} \ar[d] \\ 
 f_{*}\cO_{Y} \ar[r] & f_{*}\cO_{Y} \otimes_{A} \{A(\alpha)\}. 
\end{tikzcd}
\end{equation} 
As $f$ is lafp, $f_{*}\cO_{Y}$ is almost perfect \cite[Thm.~5.6.0.2]{SAG}. On the other hand, by assumption $A \to f_{*}\cO_{Y}$ is an isomorphism outside $Z$. Hence the left vertical fibre in \eqref{eq:cot3} lies in $\Mod(A \on Z)^{\aperf}$. It now follows from Lemma~\ref{lem:pro-completion-aperf} that the map on vertical fibres is a weak equivalence, and hence that $\eqref{eq:cot3}$ is weakly cartesian. As $L^{i}_{A}$ is connective, tensoring with $L^{i}_{A}$ preserves weak equivalences and weakly cartesian squares (see e.g.~\cite[Lemma~2.29]{LT}). Thus the left-hand square in \eqref{eq:cot2} is weakly cartesian. 

We now consider the right square in \eqref{eq:cot2}. The transitivity triangle for $f$ gives rise to a finite filtration on $L^{i}_{Y}$ whose graded pieces are given by 
\[
f^{*}L^{k}_{X} \otimes_{\cO_{Y}} L^{i-k}_{Y/X}, \quad k=0, \dots, i.
\]
It follows that the left vertical cofibre in our square has a finite filtration whose graded pieces are given by
\[
f_{*}(f^{*}L^{k}_{X} \otimes_{\cO_{Y}} L^{i-k}_{Y/X}) \simeq L^{k}_{X} \otimes_{A} f_{*}L^{i-k}_{Y/X} , \quad k=0, \dots, i-1.
\]
As $f$ is lafp, the relative cotangent complex $L_{Y/X}$ is almost perfect \cite[Prop.~3.2.14]{LurieThesis} and so are its wedge powers $L^{i-k}_{Y/X}$. Again because $f$ is lafp, the direct images $f_{*}L^{i-k}_{Y/X}$ are almost perfect \cite[Thm.~5.6.0.2]{SAG}. As $f$ is an isomorphism outside $Z$, the sheaf $f_{*}L^{i-k}_{Y/X}$ is supported on $Z$ for $k<i$. So we may again apply Lemma~\ref{lem:pro-completion-aperf} to deduce that the map $f_{*}L^{i-k}_{Y/X} \to (f_{*}L^{i-k}_{Y/X}) \otimes_{A} \{A(\alpha)\}_{\alpha}$ is a weak equivalence for $k=0, \dots, i-1$ and hence so is the map 
\[
L^{k}_{X} \otimes_{A} f_{*}L^{i-k}_{Y/X} \to  (L^{k}_{X} \otimes_{A} f_{*}L^{i-k}_{Y/X}) \otimes_{A} \{A(\alpha)\}_{\alpha}.
\]
Using the above filtration, it follows that the map on vertical fibres in the right-hand square in \eqref{eq:cot3} is a weak equivalence, and hence that square is also weakly cartesian. This finishes the proof of the theorem.
\end{proof}

Combining Theorem~\ref{thm:procdh-cotangent} with the arguments of \cite[\S 8.1]{ElmantoMorrow} we obtain pro-cdh descent for Elmanto--Morrow's motivic cohomology denoted by $\Z(j)^{\mot}(-)$.

\begin{cor}
	\label{cor:pro-cdh-motivic}
Let $\F$ be a prime field.  
Let $X$ be a qcqs derived $\F$-scheme, $U \subseteq X$ a quasi-compact open subscheme, and denote by $Z$ the closed subset $X \setminus U$.
Let $f\colon Y \to X$ be a  locally almost finitely presented $U$-modification  of $X$. Then the square of pro-complexes
\[
\begin{tikzcd}
 \Z(j)^{\mot}(X) \ar[d]\ar[r] &  \Z(j)^{\mot}(X_{Z}^{\wedge}) \ar[d] \\ 
  \Z(j)^{\mot}(Y) \ar[r] &  \Z(j)^{\mot}(Y_{Z}^{\wedge})
\end{tikzcd}
\]
is weakly cartesian.
\end{cor}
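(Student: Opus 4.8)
The plan is to run the argument of \cite[\S 8.1]{ElmantoMorrow} essentially unchanged, feeding in Theorem~\ref{thm:procdh-cotangent} in place of the pro-cdh descent statement for the cotangent complex used there (which was only available under Noetherian hypotheses), together with Theorem~\ref{thm:pro-cdh} in place of pro-cdh descent for $K$-theory, $\THH$, and $\TC$.

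First I would recall from \cite{ElmantoMorrow} that, over the prime field $\F$, the motivic complexes $\Z(j)^{\mot}(-)$ fit, on associated graded pieces, into a functorial pullback square schematically of the form
\[
\begin{tikzcd}
 \Z(j)^{\mot}(X) \ar[d]\ar[r] & \Z(j)^{\cdh}(X) \ar[d] \\
 \Z(j)^{\TC}(X) \ar[r] & L_{\cdh}\Z(j)^{\TC}(X),
\end{tikzcd}
\]
obtained by passing to associated graded pieces in the filtered refinement, constructed in \opcit, of the cartesian square relating $K$-theory, $KH$, $\TC$, and $L_{\cdh}\TC$ (cf.\ the remark after Theorem~\ref{thm:Weibel-vanishing-KH}). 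Here $\Z(j)^{\cdh}$ is the cdh sheafification of $\Z(j)^{\mot}$, and $\Z(j)^{\TC}(X)$ is, up to a shift, the graded piece $\gr^{j}_{\mot}\TC(X)$ of the motivic filtration on $\TC$; for $\F = \F_{p}$ it is (a shift of) $p$-adic syntomic cohomology, and for $\F = \Q$ it is built from the Hodge-completed derived de Rham complex of $X$ relative to $\Q$, hence from the derived exterior powers $L^{i}_{X}$ of the cotangent complex. As weakly cartesian squares of pro-spectra are closed under finite limits and the pro-cdh square depends functorially on the presheaf, it then suffices to check the claim of the corollary after replacing $\Z(j)^{\mot}$ by each of the three corners $\Z(j)^{\cdh}$, $\Z(j)^{\TC}$, and $L_{\cdh}\Z(j)^{\TC}$ separately.

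For the two cdh-local presheaves $\Z(j)^{\cdh}$ and $L_{\cdh}\Z(j)^{\TC}$ the pro-cdh square is weakly cartesian for formal reasons: a cdh sheaf only depends on the underlying reduced classical scheme, so the pro-objects attached to the formal completions $X_{Z}^{\wedge}$ and $Y_{Z}^{\wedge}$ are pro-constant, and the square reduces to the abstract blowup square of $\cla f$ --- which is cdh-distinguished, since $\cla f$ is proper, classically finitely presented, and an isomorphism over $\cla U$ while $Z$ carries a finitely presented closed subscheme structure because $U$ is quasi-compact --- hence cartesian. For $\Z(j)^{\TC} = \gr^{j}_{\mot}\TC$ I would use the main theorems: rationally, $\gr^{j}_{\mot}\TC$ is built from the derived de Rham complex relative to $\Q$ and so, via its Hodge filtration, from the $L^{i}_{X}$, to which Theorem~\ref{thm:procdh-cotangent} applies; in the $p$-complete part, $\gr^{j}_{\mot}\TC$ is $p$-adic syntomic cohomology, whose reduction modulo $p$ carries a conjugate filtration with graded pieces built from (Frobenius twists of) the $L^{i}_{X/\F_{p}}$, so Theorem~\ref{thm:procdh-cotangent} applies after reducing mod $p$ and a limit over $p^{n}$ recovers the $p$-complete statement. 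Alternatively --- and as in \cite{ElmantoMorrow} --- one may simply input pro-cdh descent for $\THH$ and $\TC$ from Theorem~\ref{thm:pro-cdh}. In either case, completeness of the motivic filtrations and the stability of weakly cartesian squares of pro-spectra under the required limits and colimits (see \cite[\S 2]{LT}) let one assemble the graded-piece statements into pro-cdh descent for $\gr^{j}_{\mot}\TC$, and hence, via the pullback square above, for $\Z(j)^{\mot}$.

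The obstacle I expect to be the real work is bureaucratic rather than conceptual: the constructions and filtered refinements of \cite{ElmantoMorrow} are set up for classical qcqs $\F$-schemes, so one must verify that everything extends to qcqs \emph{derived} $\F$-schemes --- which is precisely the setting in which Theorems~\ref{thm:pro-cdh} and~\ref{thm:procdh-cotangent} (and, at the level of modules, Lemma~\ref{lem:pro-completion-aperf}) are available --- and then carefully track how completeness of the motivic filtrations, their graded pieces, and the $p$-completion in characteristic $p$ interact with the formation of pro-objects.
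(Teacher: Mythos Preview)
Your proposal is correct and takes essentially the same approach as the paper: the paper's proof consists of the single sentence that the argument of \cite[\S 8.1]{ElmantoMorrow} goes through verbatim once their Lemma~8.5 is replaced by Theorem~\ref{thm:procdh-cotangent}. Your write-up is simply a more detailed unpacking of what that argument actually involves, and the bureaucratic concern you flag at the end (extending the Elmanto--Morrow setup to derived schemes) is not addressed separately in the paper either.
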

\begin{proof}
The proof of Elmanto and Morrow goes through verbatim once we replace their Lemma~8.5 by the above Theorem~\ref{thm:procdh-cotangent}.
\end{proof}

\section{Generalized Weibel vanishing}
	\label{sec:Weibel-vanishing}

In this section we prove our generalized Weibel vanishing result. As its formulation involves the valuative dimension, we start by recalling the latter (Subsection~\ref{subsec:valdim}). In Subsection~\ref{subsec:reductions} we generalize some results about reductions of ideals  from \cite{SwansonHuneke} to non-Noetherian rings. These will go into the proof of Weibel vanishing in Subsection~\ref{subsec:Weibel-vanishing}.

\subsection{Valuative dimension}
	\label{subsec:valdim}

The valuative dimension of a commutative ring was introduced and studied by Jaffard~\cite{Jaffard}; we refer to \cite[\S 2.3]{EHIK} for an account and to \cite[Lemma~7.2]{Kelly:2024aa} which reproves some of Jaffard's key results in  modern language. For an integral domain $A$, it is defined as
\[
\vdim(A) = \sup \{ n \,|\, \exists A \subseteq V_{0} \subsetneq V_{1} \subsetneq \dots \subsetneq V_{n} \subseteq \Frac(A), \text{ $V_{i}$ valuation ring}\},
\]
and in general as $\vdim(A) = \sup \{ \vdim(A/\fp) \,|\, \fp \in \Spec(A) \}$.
For a classical scheme $X$, one then sets 
\[
\vdim(X) = \sup \{ \vdim(\cO_{X}(U)) \,|\, U \subseteq X \text{ affine open}\}.
\]
We remark that that $\vdim(X) = \sup \dim \operatorname{RZ}(X_\lambda)$ is the supremum of the Krull dimensions of the Riemann--Zariski spaces as $X_\lambda$ ranges over the irreducible components of $X$. That is, the supremum of the Krull dimensions of all blowups (of finite type) of all $X_\lambda$.
We always have $\dim(X) \leq \vdim(X)$ and equality holds if $X$ is Noetherian \cite[Ch.~IV, Thm.~1, Cor.~2 of Thm.~5]{Jaffard}.
Note that the valuative dimension only depends on the underlying reduced scheme of a scheme. In particular, it makes sense to talk about the valuative dimension of a closed subset of a scheme, by equipping it with any closed subscheme structure.

Let now $X$ be a derived or spectral scheme. The Krull dimension $\dim(X)$ of $X$ is defined to be the Krull dimension of the underlying topological space of $X$. The valuative dimension $\vdim(X)$ is defined to be the valuative dimension of the underlying classical scheme $\cla X$.

We will need the following result.
\begin{lemma}\label{lem;dimv} 
For a scheme $X$ and a point $x\in X$ with  closure $\ol{\{x\}}$ in $X$, 
\[
\vdim(\cO_{X,x}) +  \vdim(\ol{\{x\}}) \leq \vdim(X) .
\]
In particular,
\[
\vdim(\cO_{X,x}) +  \dim(\ol{\{x\}}) \leq \vdim(X) .
\]
 \end{lemma}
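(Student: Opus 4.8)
The plan is to reduce to the affine situation and then to a statement purely about valuation rings and chains of primes. First I would note that the valuative dimension is computed locally on affine opens, so it suffices to take $X = \Spec(A)$ affine with $x$ corresponding to a prime $\mathfrak{p} \subseteq A$; then $\mathcal{O}_{X,x} = A_{\mathfrak{p}}$ and $\overline{\{x\}} = \Spec(A/\mathfrak{p})$. Replacing $A$ by $A/\mathfrak{q}$ for a minimal prime $\mathfrak{q} \subseteq \mathfrak{p}$, and using that $\vdim$ of a scheme is the supremum of $\vdim(A/\mathfrak{q})$ over minimal primes (equivalently, that the inequality need only be checked after passing to an irreducible component through $x$), I would reduce to the case that $A$ is a domain. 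So the claim becomes: for a domain $A$ and a prime $\mathfrak{p}$,
\[
\vdim(A_{\mathfrak{p}}) + \vdim(A/\mathfrak{p}) \leq \vdim(A).
\]

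The key step is to splice chains of valuation rings. Given a chain of valuation rings $A_{\mathfrak{p}} \subseteq V_0 \subsetneq \dots \subsetneq V_m$ inside $\Frac(A_{\mathfrak{p}}) = \Frac(A)$ realizing (up to $\varepsilon$) the value $\vdim(A_{\mathfrak{p}})$, and a chain of valuation rings $A/\mathfrak{p} \subseteq W_0 \subsetneq \dots \subsetneq W_n$ inside $\Frac(A/\mathfrak{p})$ realizing $\vdim(A/\mathfrak{p})$, I would use the fact that $V_0$ dominates $A_{\mathfrak{p}}$, hence has a prime lying over $\mathfrak{p}A_{\mathfrak{p}}$; localizing or rather passing to the appropriate coarsening, one gets a valuation ring $V$ of $\Frac(A)$ containing $A$ with residue field containing $\Frac(A/\mathfrak{p})$, together with a chain of convex subgroups of the value group corresponding to the chain $V_0 \subsetneq \dots \subsetneq V_m$ below $\mathfrak{p}$ and the pulled-back chain $W_0 \subsetneq \dots \subsetneq W_n$ in the residue field. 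Composing the valuation $V$ with the valuations giving the $W_i$ (the standard construction of composite valuations) produces a chain of $n + m + 1$ strictly increasing valuation rings of $\Frac(A)$ containing $A$, which exhibits $\vdim(A) \geq (m+1) + (n+1) - 1 = m + n + 1$. Taking suprema over the two input chains yields the desired inequality. The second displayed inequality is then immediate from $\dim(\overline{\{x\}}) \leq \vdim(\overline{\{x\}})$.

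Alternatively, and perhaps more cleanly, one can invoke the description of $\vdim$ via Riemann--Zariski spaces noted just above the lemma: $\vdim(A) = \dim \operatorname{RZ}(A)$ for $A$ a domain, where $\operatorname{RZ}(A)$ is the space of valuation rings of $\Frac(A)$ containing $A$. Under this identification the inequality becomes a statement about the Riemann--Zariski space: the point $\mathfrak{p}$ determines a ``flag'' and one has a closed subspace $\operatorname{RZ}(A_{\mathfrak{p}})$ sitting above a point whose ``fiber direction'' is $\operatorname{RZ}(A/\mathfrak{p})$, and dimensions along the two transverse directions add. This can be made precise using the local structure of $\operatorname{RZ}$ and the bijection between coarsenings of a valuation and convex subgroups of its value group (equivalently, primes of the valuation ring), which is exactly the classical theory of composite valuations.

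\textbf{Main obstacle.} The essential point — and the only place requiring care — is the splicing/composition of valuations across $\mathfrak{p}$: one must check that a chain of valuation overrings of $A_{\mathfrak{p}}$ and a chain of valuation overrings of the residue domain $A/\mathfrak{p}$ genuinely compose to a single strictly increasing chain of valuation overrings of $A$ (in particular that strictness is preserved and that the composite rings still contain $A$, not just $A_{\mathfrak{p}}$). This is standard valuation theory (composite valuations, convex subgroups), but getting the bookkeeping of the value groups and residue fields right, and handling the $+1$ overlap correctly, is where the work lies. None of the other reductions (localization, passing to a domain, comparing $\dim$ and $\vdim$) are more than routine given the results already recalled in Subsection~\ref{subsec:valdim}.
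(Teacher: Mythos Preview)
Your proposal is correct and follows essentially the same route as the paper. The paper also reduces to $X=\Spec(A)$ with $A$ a domain and then invokes Jaffard \cite[Ch.~IV, Prop.~2]{Jaffard}; the one-line hint given there---that any valuation on a field $K$ extends to any extension field $L$ and that $\Spec(S)\to\Spec(R)$ is surjective for such an extension---is exactly the input needed for your composite-valuation splicing (extend the $W_i$'s from $\Frac(A/\mathfrak p)$ to the residue field of a valuation ring dominating $A_{\mathfrak p}$, then compose). One small point to tighten: from $A_{\mathfrak p}\subseteq V_0$ alone you do not get that $V_0$ \emph{dominates} $A_{\mathfrak p}$; you should first replace $V_0$ by a composite with a valuation of its residue field dominating the image of $A_{\mathfrak p}$, which only increases $\dim V_0$. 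Your off-by-one in the final count ($\vdim(A)\geq m+n$, not $m+n+1$) is harmless.
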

\begin{proof}
By definition it suffices to prove the lemma in case $X=\Spec(A)$ with $A$ integral. In this case, the first assertion  is \cite[Ch.~IV, Prop.~2]{Jaffard}; indeed, this follows from the basic facts that for any field extension $L/K$ any valuation $R \subseteq K$ admits at least one extension $S \subseteq L$, and $\Spec(S) \to \Spec(R)$ is always surjective. The second assertion follows from the fact that $\dim(Y) \leq \vdim(Y)$ for every scheme $Y$.
\end{proof}

\subsection{Reductions}
	\label{subsec:reductions}
	
Let $R$ be a ring, and let $J \subseteq I$ be ideals in $R$. Then $J$ is called a reduction of $I$ if there is a positive integer $n$ such that $JI^{n} = I^{n+1}$. Associated with an ideal $I$ we have its Rees algebra
\[
R[It] = \sum_{n\geq 0} I^{n}t^{n} \subseteq R[t], \quad R[It] \cong \bigoplus_{n\geq 0} I^{n}
\]
and its extended Rees algebra
\[
R[It, t^{-1}] = \sum_{n<0} Rt^{n} + \sum_{n\geq 0} I^{n}t^{n} \subseteq R[t, t^{-1}].
\]
So $\operatorname{Proj}(R[It])$ is the blowup of $\Spec(R)$ in $\Spec(R/I)$, and $\Spec(R[It, t^{-1}])$ is a flat deformation of $\Spec(R)$ to the normal cone $\Spec(\bigoplus_{n\geq 0} I^{n}/I^{n+1})$.

\begin{lemma} 
	\label{lem:reduction-finiteness}
Assume that $J \subseteq I$ are  ideals in $R$. 
\begin{enumerate}
\item If $I$ is finitely generated, and $J$ is a reduction of $I$, then the ring extension $R[Jt] \subseteq R[It]$ is finite.
\item Conversely, if the ring extension $R[Jt] \subseteq R[It]$ is finite, then $J$ is a reduction of $I$.
\end{enumerate}
\end{lemma}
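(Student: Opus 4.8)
The plan is to prove the two implications separately, using the identification of the Rees algebra as a graded ring $R[It] \cong \bigoplus_{n \geq 0} I^n$ together with the fact that $R[It]$ is generated in degree $1$ over $R$ by the degree-$1$ part $It$.

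For part (1): Assume $I = (a_1, \dots, a_m)$ is finitely generated and that $JI^n = I^{n+1}$ for some $n \geq 1$. First I would observe that $R[Jt]$ is a finitely generated $R$-algebra (generated by the finitely many generators $b_i t$ of $Jt$, since $J \subseteq I$ is finitely generated too — or at least, one only needs that $R[Jt] \subseteq R[It]$ and that the relevant degree pieces are controlled). The key claim is that $R[It]$ is generated as an $R[Jt]$-module by the finitely many homogeneous elements lying in degrees $\leq n$, namely by $1$ together with the monomials in the $a_i t$ of degree $1, \dots, n$. Indeed, in degree $m > n$ we have $I^m = I^{m-n} \cdot I^n = I^{m-n} \cdot (J I^{n-1})$... more cleanly: iterating $J I^n = I^{n+1}$ gives $J^k I^n = I^{n+k}$ for all $k \geq 0$, so $I^{n+k} = J^k \cdot I^n$, which says exactly that the degree-$(n+k)$ part of $R[It]$ is $J^k$ times the degree-$n$ part. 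Hence $R[It] = \sum_{d=0}^{n} (R[Jt]) \cdot (I^d t^d)$, and since each $I^d$ is finitely generated (as $I$ is), this exhibits $R[It]$ as a finite $R[Jt]$-module.

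For part (2): Suppose $R[It]$ is finite as an $R[Jt]$-module. Choose finitely many homogeneous generators; since $R[Jt]$ is graded and each generator is homogeneous, we may take them to be homogeneous of degrees $d_1, \dots, d_s$, and enlarging the generating set we may assume they include a full set of generators of $I^d t^d$ for each $d \leq N := \max_i d_i$. Then look at degree $N+1$: every element of $I^{N+1}t^{N+1}$ is an $R[Jt]$-combination of the generators, and by degree reasons only the generators in degrees $\leq N$ contribute, multiplied by elements of $J^k t^k$ with $k \geq 1$. In particular $I^{N+1} = \sum_{d \leq N} J^{N+1-d} I^d$; since $J \subseteq I$, each term $J^{N+1-d} I^d \subseteq J \cdot I^N$ whenever $N+1-d \geq 1$, i.e. always. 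Hence $I^{N+1} \subseteq J I^N \subseteq I^{N+1}$, so $J I^N = I^{N+1}$ and $J$ is a reduction of $I$ (with $n = N$).

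I do not expect a serious obstacle here; the only point requiring a little care is the bookkeeping in part (2) — making sure the homogeneous generators can be chosen so that their degrees are bounded and that the degree-$(N+1)$ piece is genuinely forced to lie in $J I^N$. The standard trick of enlarging the generating set to include $R$-module generators of $I^d t^d$ for all $d$ up to the top generator degree handles this cleanly, and no Noetherian hypothesis is needed since at each stage only finitely many ideals ($I, I^2, \dots, I^N$ and $J$) are involved and $I$ is not even assumed finitely generated in part (2).
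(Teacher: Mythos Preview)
Your argument is correct and is essentially the standard proof from Swanson--Huneke, Theorem~8.2.1, which is exactly what the paper cites (noting only that the Noetherian hypothesis there is never used). One small cosmetic point in part~(2): the ``enlarging the generating set to include a full set of $R$-module generators of $I^d t^d$'' step is unnecessary and would actually break finiteness of your generating set if $I$ is not finitely generated; the original finitely many homogeneous generators $x_i t^{d_i}$ already give $I^{N+1} \subseteq \sum_i J^{N+1-d_i} x_i \subseteq J I^N$, which is all you need.
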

\begin{proof}
This is \cite[Th.~8.2.1]{SwansonHuneke} which is formulated for ideals in Noetherian rings. The proof given there works verbatim in the asserted generality.
\end{proof}

Let $R$ be a local ring with maximal ideal $\fm$ and residue field $\kappa$. Let $I \subset R$ be a finitely generated, proper ideal. Then the fibre cone
\[
 \cF_{I}(R) = R[It]\otimes^{\heartsuit}_{R} \kappa = \kappa \oplus I/\fm I \oplus I^{2}/\fm I^{2} \oplus \dots
\]
is a finitely generated $\kappa$-algebra and in particular Noetherian. Its Krull dimension $\ell(I) := \dim \cF_{I}(R)$ is called the analytic spread of $I$.

\begin{prop}
	\label{prop:analytic-spread-vdim}
Let $R$ be a local ring, $I \subseteq R$ a finitely generated, proper ideal. Then the analytic spread of $I$ is bounded by the valuative dimension of $R$:
\[
\ell(I) \leq \vdim(R).
\]
\end{prop}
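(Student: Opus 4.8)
The plan is to realise the fibre cone $\cF_{I}(R)$ as the special fibre of the blowup of $\Spec(R)$ along $I$, and then to play off Lemma~\ref{lem;dimv} against the fact that blowups do not increase the valuative dimension. We may assume $\ell(I) = \dim \cF_{I}(R) \geq 1$, since for $\ell(I) = 0$ there is nothing to prove. Write $\kappa = R/\fm$, let $q \colon B := \Bl_{I}(\Spec R) = \operatorname{Proj}(R[It]) \to \Spec R$ be the blowup in the finitely presented closed subscheme $\Spec(R/I)$ (finitely presented because $I$ is finitely generated), and let $E \hookrightarrow B$ be its exceptional divisor, an effective Cartier divisor with $|E| = q^{-1}(V(I))$.

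Base change of $q$ along the closed point gives $B_{0} := q^{-1}(\fm) = B \times_{\Spec R} \Spec \kappa \cong \operatorname{Proj}(R[It] \otimes_{R} \kappa) = \operatorname{Proj}(\cF_{I}(R))$. As $I$ is finitely generated, $\cF_{I}(R)$ is a standard graded $\kappa$-algebra of finite type, hence Noetherian, and since $\dim \cF_{I}(R) = \ell(I) \geq 1$ its $\operatorname{Proj}$ is non-empty of dimension $\dim \cF_{I}(R) - 1$. Thus $B_{0} \neq \emptyset$ and $\dim B_{0} = \ell(I) - 1$; and since $\fm \in V(I)$ we have $|B_{0}| \subseteq |E|$.

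Pick $\xi \in B_{0}$ a generic point of an irreducible component of $B_{0}$ of maximal dimension. Because $B_{0}$ is closed in $B$, the closure $\overline{\{\xi\}}$ of $\xi$ in $B$ equals this component, so $\dim \overline{\{\xi\}} = \dim B_{0} = \ell(I) - 1$. On the other hand $\xi$ lies on the effective Cartier divisor $E$, which is Zariski-locally cut out by a non-zero-divisor; such an element lies in no minimal prime, so $|E|$ contains no generic point of $B$, whence $\dim \cO_{B,\xi} \geq 1$ and a fortiori $\vdim(\cO_{B,\xi}) \geq 1$. Applying Lemma~\ref{lem;dimv} to the scheme $B$ and the point $\xi$ therefore gives
\[
\ell(I) \;=\; 1 + (\ell(I) - 1) \;\leq\; \vdim(\cO_{B,\xi}) + \dim \overline{\{\xi\}} \;\leq\; \vdim(B).
\]
Finally, a blowup in a finitely presented closed subscheme does not raise the valuative dimension, so $\vdim(B) \leq \vdim(\Spec R) = \vdim(R)$; this is the input already used in the discussion of Theorem~\ref{thm:Weibel-vanishing-KH} (see also \cite{EHIK}), and it may alternatively be read off the description of $\vdim$ via Riemann--Zariski spaces recalled in Subsection~\ref{subsec:valdim}, since every finite type blowup of a component of $B$ is again a finite type blowup of the corresponding component of $\Spec R$. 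Combining the two inequalities yields $\ell(I) \leq \vdim(R)$. I expect this last monotonicity of $\vdim$ under blowups --- in the present possibly reducible, non-reduced, non-Noetherian setting --- to be the only genuinely delicate point; everything else is the standard geometry of blowups together with the dimension formula for the $\operatorname{Proj}$ of a standard graded algebra.
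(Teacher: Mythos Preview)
Your argument is correct and genuinely different from the paper's. The paper transplants the Swanson--Huneke proof to the non-Noetherian setting via the \emph{extended} Rees algebra: one bounds $\ell(I)=\vdim\cF_I(R)\le\vdim\gr_I(R)\le\vdim R[It,t^{-1}]-1$ using that $t^{-1}$ is a non-zero-divisor (this is \cite[Prop.~2.3.2(4)]{EHIK}), and then shows $\vdim R[It,t^{-1}]=\vdim R+1$ by a direct valuation-theoretic computation from Jaffard (transcendence degree $1$ extensions raise the rank of a valuation by at most $1$). You instead pass to $\operatorname{Proj}$ of the ordinary Rees algebra, identify the special fibre with $\operatorname{Proj}\cF_I(R)$ of dimension $\ell(I)-1$, and then feed a point on the exceptional Cartier divisor into Lemma~\ref{lem;dimv} to get $\ell(I)\le\vdim(\Bl_I\Spec R)$; the final step $\vdim(\Bl_I\Spec R)\le\vdim R$ is exactly \cite[Prop.~2.3.2(6)]{EHIK}, which the paper itself invokes later in the proof of Theorem~\ref{thm:Weibel-vanishing}. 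So both proofs ultimately rest on inputs from \cite{EHIK}, but different ones: the paper uses (4) plus Jaffard's rank bound, you use (6) plus Lemma~\ref{lem;dimv}. Your route is a bit more geometric and recycles a lemma already present in the paper; the paper's route stays closer to the classical Noetherian argument and makes the dependence on valuation theory explicit. One small remark: the paper only invokes \cite[Prop.~2.3.2(6)]{EHIK} when the base is an integral domain (so the blowup is visibly birational); for general local $R$ you should be slightly careful that the cited statement still applies, though your Riemann--Zariski reading of $\vdim$ does the job.
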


\begin{proof}
This proof follows \cite[Prop.~5.1.6]{SwansonHuneke}, which treats the Noetherian case. 
We may assume that $\vdim(R)$ is finite.
Let $\gr_{I}(R) = R[It]\otimes^{\heartsuit}_{R} R/I = R/I \oplus I/I^{2} \oplus \dots = R[It,t^{-1}]/(t^{-1})$. As $\cF_{I}(R)$ is a quotient of $\gr_{I}(R)$ and $\cF_{I}(R)$ is Noetherian, we have
\[
\ell(I) = \vdim \cF_{I}(R) \leq \vdim \gr_{I}(R).
\]
As $t^{-1}$ is a non-zero divisor in $R[It, t^{-1}]$, it is not contained in any minimal prime ideal of $R[It, t^{-1}]$. Thus $\vdim \gr_{I}(R) \leq \vdim R[It, t^{-1}] - 1$ by \cite[Prop.~2.3.2(4)]{EHIK}.
We claim that $\vdim R[It, t^{-1}] = \vdim R +1$. This will finish the proof. To prove the claim, as the minimal prime ideals of $R[It, t^{-1}]$ are in bijection with those of $R$ (see the general discussion in \cite{SwansonHuneke} before Thm.~5.1.4), we may assume that $R$ is a domain. As $\Frac(R[It, t^{-1}]) = \Frac(R)(t)$ has transcendence degree 1 over $\Frac(R)$, \cite[Ch.~I, Thm.~2, p.~10]{Jaffard} implies that if $v'$ is any valuation of $\Frac(R[It, t^{-1}])$ extending a valuation $v$ of $\Frac(R)$ then the rank $\rank(v')$  of $v'$ is at most $\rank(v) + 1$. It follows that $\vdim R[It, t^{-1}] \leq \vdim R + 1$. 
The kernel of the surjection $R[It, t^{-1}] \to R, t \mapsto 1$, is a non-trivial prime ideal, hence $\vdim R[It, t^{-1}] > \vdim R$, for example by Lemma~\ref{lem;dimv}.
\end{proof}

\begin{prop}
	\label{prop:existence-reduction}
Let $R$ be a local ring and $I \subseteq R$ be a finitely generated, proper ideal with analytic spread $\ell(I)$. Then there exists an $n\geq 1$ and a reduction $J$ of $I^{n}$ which is generated by $\ell(I)$ elements.
\end{prop}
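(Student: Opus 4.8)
The plan is to follow the classical argument for the existence of minimal reductions (as in \cite[\S 8.3]{SwansonHuneke}), but since $R$ need not be Noetherian we cannot work directly with $R$; instead we pass to the fibre cone $\cF_{I}(R)$, which \emph{is} a finitely generated algebra over the residue field $\kappa$ and hence Noetherian, and extract the reduction from a Noether normalization there. First I would reduce to the case that $\kappa$ is infinite: enlarging the residue field by passing from $R$ to a suitable local extension $R \to R'$ (e.g.\ $R' = R[x]_{\fm R[x]}$) does not change the analytic spread, and a reduction over $R'$ generated by $\ell(I)$ elements descends to a reduction of some power $I^{n}$ over $R$ generated by $\ell(I)$ elements, by a standard determinant-of-coefficients / graded-Nakayama argument (the transition maps and the finiteness in Lemma~\ref{lem:reduction-finiteness} are all preserved under this faithfully flat local base change). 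So assume $\kappa$ infinite.

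Next, set $d = \ell(I) = \dim \cF_{I}(R)$. Since $\cF_{I}(R) = \kappa \oplus I/\fm I \oplus I^{2}/\fm I^{2} \oplus \cdots$ is a standard graded $\kappa$-algebra of dimension $d$ with $\kappa$ infinite, a general choice of $d$ linear forms $\bar a_{1}, \dots, \bar a_{d} \in I/\fm I$ generates an ideal over which $\cF_{I}(R)$ is module-finite; equivalently, the subalgebra $\kappa[\bar a_{1}, \dots, \bar a_{d}]$ is a Noether normalization, so $\cF_{I}(R)$ is finite as a module over it. Lift the $\bar a_{i}$ to elements $a_{1}, \dots, a_{d} \in I$ and let $J = (a_{1}, \dots, a_{d}) \subseteq I$. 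I would now want to conclude that $J$ is a reduction of $I$, but this is exactly the step that can fail without Noetherian hypotheses on $R$ (the usual proof uses that $\gr_{I}(R)$ is Noetherian to go from ``finite after $\otimes \kappa$'' to ``finite''). This is where the power comes in: one only gets that $J$ is a reduction of some power $I^{n}$.

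The key mechanism for the last step is to compare Rees algebras rather than fibre cones. Consider the standard graded $R$-algebra $S = R[It] = \bigoplus_{n\ge 0} I^{n}t^{n}$ and its subalgebra $T = R[a_{1}t, \dots, a_{d}t]$ generated in degree $1$ by the $a_{i}t$. The fibre cone is $S \otimes^{\heartsuit}_{R} \kappa$, and the image of $T$ there is the Noether normalization $\kappa[\bar a_{1}, \dots, \bar a_{d}]$, over which $S\otimes^{\heartsuit}_{R}\kappa$ is module-finite; in particular $S\otimes^{\heartsuit}_{R}\kappa$ is generated as a $T\otimes^{\heartsuit}_{R}\kappa$-module by finitely many homogeneous elements, which we may take to lie in degrees $< n$ for some fixed $n$. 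By graded Nakayama applied degree by degree (each graded piece $I^{m}$ of $S$ is finitely generated over $R$ since $I$ is finitely generated), it follows that the Veronese subalgebra $S^{(n)} = \bigoplus_{m\ge 0} I^{mn}t^{mn}$ is module-finite over its subalgebra generated by $I^{n}t^{n}$, and that in turn is a finite extension of the subalgebra generated by the degree-$n$ monomials in the $a_{i}t$, i.e.\ over $R[J't']$ where $J' = J^{n} \cdot (\text{the ideal generated by degree-}n\text{ monomials})$; more simply, one gets that $R[J^{n}t^{n}] \subseteq R[I^{n}t^{n}]$ is a finite ring extension. By part (2) of Lemma~\ref{lem:reduction-finiteness}, $J^{n}$ is a reduction of $I^{n}$. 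Since $J^{n}$ is generated by the $\binom{d+n-1}{n}$ degree-$n$ monomials in $a_{1},\dots,a_{d}$ — but we want only $\ell(I)$ generators — I would instead carry out the same Noether-normalization argument directly on the Veronese $\cF_{I}(R)^{(n)} = \cF_{I^{n}}(R)$ once $n$ is fixed, noting $\dim \cF_{I^{n}}(R) = \dim \cF_{I}(R) = d$, and choose $d$ general linear forms there; these lift to $d$ elements of $I^{n}$ generating an ideal $J$ with $R[Jt]\subseteq R[I^{n}t]$ finite, hence $J$ a reduction of $I^{n}$ generated by $\ell(I)=d$ elements, as desired.

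The main obstacle, as indicated, is precisely the non-Noetherian passage from ``module-finite modulo $\fm$'' to ``module-finite'': over a Noetherian ring one finishes in a line via Nakayama on the finitely generated module $S$, but here $S = R[It]$ is generally not a finitely generated $R$-algebra, let alone a finite module. The fix is to work one graded piece at a time — each $I^{m}$ is a finitely generated $R$-module because $I$ is — and to accept a Veronese twist (a power of $I$), which is exactly why the statement only claims a reduction of $I^{n}$ rather than of $I$ itself. Everything else (the infinite-residue-field reduction, the general-position choice of linear forms, graded Nakayama) is routine and insensitive to the Noetherian hypothesis.
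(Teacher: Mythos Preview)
Your overall framework---Noether normalization of the Noetherian fibre cone $\cF_I(R)$ followed by Nakayama---is exactly right, but the argument as written has two real problems. First, the reduction to infinite residue field is not justified: you assert that a reduction of $IR'$ by $\ell$ elements ``descends to a reduction of some power $I^n$ over $R$ generated by $\ell(I)$ elements'', but this descent is neither standard nor explained, and it is precisely the step that fails when $\kappa$ is finite (the locus of good $\ell$-tuples of linear forms may have no $\kappa$-point). Second, your worry that ``the step from module-finite modulo $\fm$ to module-finite can fail without Noetherian hypotheses'' is misplaced. Once $\cF_I(R)$ is finite over $\kappa[\bar a_1,\dots,\bar a_\ell]$ with homogeneous generators in degrees $\le N$, you get $I^{N+1}/\fm I^{N+1} = \sum_i \bar a_i \cdot I^N/\fm I^N$, i.e.\ $JI^N + \fm I^{N+1} = I^{N+1}$; since $I$ is finitely generated, so is $I^{N+1}$, and a \emph{single} application of Nakayama gives $JI^N = I^{N+1}$. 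No Veronese detour is needed, and your later arguments end up either circular (finiteness of $R[I^nt^n]$ over itself) or silently reusing the very Nakayama step you doubted.

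The paper's proof (following \cite[Prop.~8.3.8]{SwansonHuneke}) avoids both issues by \emph{not} reducing to infinite residue field. Instead it uses the general form of graded Noether normalization (valid over any field $\kappa$), which produces algebraically independent homogeneous elements $\bar a_1,\dots,\bar a_\ell$ lying in a single degree $n$, not necessarily $n=1$. Lifting these to $a_i\in I^n$ and setting $J=(a_1,\dots,a_\ell)\subseteq I^n$, the finiteness of $\bigoplus_k I^{nk}/\fm I^{nk}$ over $\kappa[\bar a_i]$ plus one Nakayama (on the finitely generated module $I^{n(d+1)}$) gives $J(I^n)^d = (I^n)^{d+1}$. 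So the power $n$ in the statement comes from the \emph{degree} in Noether normalization over a possibly finite field, not from any non-Noetherian obstruction to Nakayama.
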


\begin{proof}
We follow the proof of \cite[Prop.~8.3.8]{SwansonHuneke}. Let $\fm$ denote the maximal ideal of $R$, $\kappa = R/\fm$.
By graded Noether normalization (see e.g.~\cite[Thm.~4.2.3]{SwansonHuneke}) there exists an integer $n$ and homogeneous elements $\bar a_{1}, \dots, \bar a_{\ell} \in \cF_{I}(R)_{n} = I^{n}/\fm I^{n}$ such that $A = \kappa[\bar a_{1}, \dots, \bar a_{\ell}] \subseteq \cF_{I}(R)$ is a polynomial ring and the extension $A \subseteq \cF_{I}(R)$ is finite. In particular, $\ell =\dim \cF_{I}(R) = \ell(I)$. 
Moreover, also the subextension $A \subseteq \bigoplus_{k\geq 0} I^{nk}/\fm I^{nk}$ is then finite (up to grading,  the latter algebra is $\cF_{I^{n}}(R)$). Let $nd$ be the maximal degree of a finite set of homogeneous generators. Then 
$I^{n(d+1)}/\fm I^{n(d+1)} = \sum_{i=1}^{\ell} \bar a_{i} \cdot I^{nd}/\fm I^{nd}$.
Choose lifts $a_{i} \in I^{n}$ of the $\bar a_{i}$ and let $J = (a_{1}, \dots, a_{\ell}) \subseteq I^{n}$. The previous equality implies $J I^{nd} + \fm I^{n(d+1)} = I^{n(d+1)}$. As $I$ is finitely generated, the Nakayama lemma gives $JI^{nd} = I^{n(d+1)}$, i.e.~$J$ is a reduction of $I^{n}$.
\end{proof}

\subsection{Weibel vanishing}
	\label{subsec:Weibel-vanishing}

The following theorem generalizes \cite[Thm.~B]{KST}, which treats the case of Noetherian classical schemes.

\begin{thm}\label{thm:Weibel-vanishing}
Let $X$ be a qcqs spectral scheme. 
Then the following hold.
\begin{enumerate}
\item $K_{-i}(X) = 0$ for all $i> \vdim(X)$.
\item For all $i \geq \vdim(X)$ and any integer $r \geq 0$, the pullback map $K_{-i}(X) \to K_{-i}(\A^{r}_{X})$ is an isomorphism.
\end{enumerate}
\end{thm}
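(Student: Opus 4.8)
The plan is to follow the approach of \cite{KerzICM} and \cite{KST}, using pro-cdh descent for finite morphisms (Proposition~\ref{prop:descent-finite-modification}) as the geometric engine, but now with the valuative dimension replacing the Krull dimension. The key observation is that unlike the Krull dimension, the valuative dimension does not increase under blowup, which is exactly what makes the induction work in the non-Noetherian setting. We will prove assertion (1) by induction on $d = \vdim(X)$; assertion (2), the $K_{-\vdim(X)}$-regularity, is then deduced by the standard trick of replacing $X$ by $\A^r_X$ and noting $\vdim(\A^r_X) = \vdim(X) + r$ together with the homotopy invariance properties of $KH$, or more directly by running the same induction while keeping track of the $\A^r$-families.

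\textbf{Reduction to the affine, reduced, local case.} First I would reduce to $X = \Spec(R)$ affine using Zariski descent and a Mayer--Vietoris / Bass delooping argument: a Nisnevich (indeed Zariski) cover lets one express $K_{-i}(X)$ via the $K$-theory of the members of an affine cover and their intersections, and an induction on the number of charts reduces to the affine case, with the valuative dimension not increasing on opens. Since $K$-theory of a spectral scheme depends only on its underlying classical scheme for the purposes of negative $K$-groups via the fibre sequence with $\mathrm{TC}$ and cdh-local comparison — more precisely, negative $K$-groups of $X$ agree with those of $\cla X$ because the relative term is connective — we reduce to $R$ a discrete commutative ring, and then (since $\vdim$ only sees $R_{\red}$, and $K_{-i}$ is insensitive to nilpotents in this range by Weibel's theorem / the pro-descent for the nilpotent thickening $\Spec(R_{\red}) \to \Spec(R)$) to $R$ reduced. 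By a further colimit argument we may assume $R$ is of finite valuative dimension $d$.

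\textbf{The inductive step.} The base case $d = 0$ means $R_{\red}$ is a finite product of fields (valuative dimension zero for a reduced ring forces dimension zero and all localizations to be fields), and negative $K$-theory of a field vanishes, giving $K_{-i}(R) = 0$ for $i > 0$. For the inductive step, given $d \geq 1$, I would pick a finitely generated proper ideal $I$ with $V(I)$ of valuative dimension $\leq d-1$ at every point — concretely, using Proposition~\ref{prop:existence-reduction} and Proposition~\ref{prop:analytic-spread-vdim}, after passing to a power one finds a reduction $J = (a_1, \dots, a_\ell)$ of $I^n$ with $\ell = \ell(I) \leq d$, so that the blowup of $\Spec(R)$ along $I$ has its exceptional locus cut out locally by $\ell \leq d$ equations, hence of valuative dimension $\leq d - 1$; moreover the blowup $Y$ itself has $\vdim(Y) \leq d$ because blowups do not increase the valuative dimension. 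Apply Proposition~\ref{prop:descent-finite-modification} (or rather the blowup case plus the structure theorem, i.e.\ Theorem~\ref{thm:pro-cdh}) to the abstract blowup square with center $Z = V(I)$: one gets a weakly cartesian square of pro-spectra relating $K(X)$, $K(Y)$, $K(X_Z^\wedge)$, $K(Y_Z^\wedge)$, hence a long exact sequence of pro-abelian groups in negative $K$-degrees. By the inductive hypothesis applied to the members of $Z$ (valuative dimension $\leq d-1$) and to $Y$ and its center (using that on $Y$ we can continue by Noetherian-type induction on the number of charts and the fact that $Y \to X$ is projective, so $Y$ is covered by affines of valuative dimension $\leq d$ whose exceptional divisors have valuative dimension $\leq d-1$), the formal completion terms contribute only in degrees $\geq -d$, and a diagram chase in the long exact sequence forces $K_{-i}(X) = 0$ for $i > d$; this is where one also needs the continuity / pro-vanishing input, namely that a pro-system of abelian groups all of which vanish is weakly zero, combined with the fact that $K_{-i}$ of the derived completion $\{R \mmod f_1^\alpha, \dots\}$ is controlled by the $K$-theory of the finite-level quotients which have valuative dimension $\leq d - 1$ at the relevant points.

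\textbf{Main obstacle.} The hard part will be the bookkeeping on the blowup $Y$: one must run the induction simultaneously over all schemes of valuative dimension $\leq d$, not just affine ones, and verify that $Y$ (which is only quasi-projective over $R$, not affine) can be handled — this requires either Zariski-localizing on $Y$ and checking that each chart, together with the trace of the exceptional divisor, satisfies the inductive hypothesis in lower valuative dimension, or setting up the induction from the start for all qcqs spectral schemes of bounded valuative dimension with a distinguished closed subset. Keeping the valuative dimension bounds correct under blowup (citing \cite[Prop.~2.3.2]{EHIK} or the Riemann--Zariski description recalled in Subsection~\ref{subsec:valdim}) and correctly identifying which terms of the pro-systems are weakly zero in each $K$-degree is the technical heart; the $\A^r$-regularity statement (2) then follows by applying (1) to $\A^r_X$ (whose valuative dimension is $d + r$) together with the usual fundamental theorem relating $K_{-i}(\A^r_X)$ to $K_{-i}(X)$ and lower terms, or equivalently by noting that the vanishing line forces $K$ to agree with $KH$ in degrees $\leq -d$.
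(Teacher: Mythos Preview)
Your inductive step has a genuine gap: you never explain how to choose the ideal $I$. An arbitrary finitely generated ideal with $\vdim V(I) \leq d-1$ does not help, because the blowup $Y = \Bl_I(X)$ still has $\vdim(Y) = d$ and induction does not apply to $Y$ itself. The paper's argument is element-wise: given $\gamma \in K_{-i}(R)$, first use Noetherian approximation to descend $\gamma$ to a finitely generated $\Z$-subalgebra $R_0 \subset R$, then invoke the Noetherian result \cite[Prop.~5]{KerzStrunk} to find an ideal $I_0 \subset R_0$ whose blowup annihilates the image of $\gamma$; only then set $I = I_0 R$. Relatedly, you have the role of the reduction theory backwards. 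Propositions~\ref{prop:analytic-spread-vdim} and \ref{prop:existence-reduction} are not used to bound $\vdim$ of the exceptional locus (that is automatic from \cite[Prop.~2.3.2(4)]{EHIK}); they are used to produce a finite morphism $\Bl_I(R) \to \Bl_J(R)$ onto a blowup in $\ell(I) \leq d$ generators, so that the associated \emph{derived} blowup is covered by $\ell(I)$ affine charts. This is precisely the input to the connectivity estimate of Proposition~\ref{prop:blowup-connectivity-non-fp}, which, combined with $\gamma|_Y = 0$ and the inductive vanishing on $Z$ and $E$, forces $\gamma = 0$.

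Three smaller points. First, the reduction to the local case is not a naive Mayer--Vietoris: it goes through a Scheiderer-type spectral sequence (Lemmas~\ref{lem1;vanishing} and \ref{lem2;vanishing}) together with the inequality $\vdim(\cO_{X,x}) + \dim(\overline{\{x\}}) \leq \vdim(X)$ of Lemma~\ref{lem;dimv}. Second, your derivation of (2) by applying (1) to $\A^r_X$ fails, since $\vdim(\A^r_X) = d+r$ only gives vanishing for $i > d+r$; the paper instead runs the entire argument in parallel for the functor $N^{(r)}K = \cof(K \to K(\A^r \times -))$, using the $\A^r$-version of Proposition~\ref{prop:blowup-connectivity-non-fp}. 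Third, the paper needs an additional Zorn's-lemma argument (borrowed from \cite{EHIK}) and closed-covering excision to reduce from the reduced local case to the \emph{irreducible} local case, which your sketch omits entirely.
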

In the spectral setting, the affine space in (2) is either the flat one or the smooth one, the assertion holds for both. In fact, one reduces immediately to underlying classical schemes (see the beginning of the proof in Subsection~\ref{subsec:proof-Weibel}), and in both cases this is just the classical affine space.

The proof of Theorem~\ref{thm:Weibel-vanishing} is along the lines of Kerz's proof in the Noetherian case in \cite{KerzICM} which is a bit more direct than the one in \cite[Thm.~B]{KST}. It uses the theory of reductions presented in Subsection~\ref{subsec:reductions} in order to prove better descent results for blowups.
The following lemma replaces \cite[Lemma~4]{KerzStrunk} and is essentially due to Scheiderer~\cite{Scheiderer}.
\begin{lemma}\label{lem1;vanishing} 
Let $X$ be a spectral space, and $\cF$ a sheaf of abelian groups on $X$.
Let $r\geq 0$ be an integer. Assume that $\cF_y=0$ for all points $y\in X$ with $\dim(\ol{\{y\}}) > r$. 
Then $H^n(X,\cF)=0$ for all integers $n>r$.
\end{lemma}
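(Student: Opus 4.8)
Let $X$ be a spectral space, and $\cF$ a sheaf of abelian groups on $X$. Let $r\geq 0$ be an integer. Assume that $\cF_y=0$ for all points $y\in X$ with $\dim(\ol{\{y\}}) > r$. Then $H^n(X,\cF)=0$ for all integers $n>r$.The plan is to run an induction on $r$, using Scheiderer's theorem that a spectral space has \emph{cohomological dimension} bounded by its Krull dimension (this is the content of \cite{Scheiderer}; the key point is that for spectral spaces, which are exactly the underlying spaces of qcqs schemes, étale/sheaf cohomology in the appropriate range vanishes above the Krull dimension). The base case $r=0$: here the hypothesis says $\cF_y=0$ for every non-closed point $y$, i.e.\ $\cF$ is supported on the subspace of closed points. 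One reduces to the closed subspace; alternatively, one directly invokes Scheiderer's bound together with the observation that a sheaf supported in dimension $\le 0$ behaves like a sheaf on a space of dimension $0$, where higher cohomology vanishes (a spectral space of Krull dimension $0$ is Boolean, hence has no higher sheaf cohomology).

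For the inductive step, suppose the statement holds for $r-1$ and let $\cF$ satisfy the hypothesis for $r$. The standard device is to filter $\cF$ by a subsheaf concentrated in the lower-dimensional part. Concretely, let $X_{\le r-1}\subseteq X$ denote the (pro-constructible) subset of points $y$ with $\dim(\ol{\{y\}})\le r-1$, with inclusion $j$, and let $\cF' = j_{*}j^{*}\cF$ be the "part of $\cF$ living in codimension-height $\ge 1$"; more precisely I would use the short exact sequence
\[
0 \to \cF_{Z} \to \cF \to \cF' \to 0,
\]
where $Z$ is the closure of the set of points of dimension exactly $r$ and $\cF_Z$ is the subsheaf of sections supported on $Z$ — so that $\cF_Z$ is a sheaf whose stalks vanish outside dimension $r$, hence is (pushed forward from) a sheaf on a space of Krull dimension $r$, while $\cF'$ has stalks vanishing in dimension $\ge r$, so the inductive hypothesis gives $H^n(X,\cF')=0$ for $n>r-1$, in particular for $n>r$. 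For the remaining term, $\cF_Z$ is supported on the closed subspace $Z$, which is again spectral of Krull dimension $\le r$; Scheiderer's bound gives $H^n(X,\cF_Z)=H^n(Z,\cF_Z|_Z)=0$ for $n>r$. Feeding both vanishings into the long exact cohomology sequence of the short exact sequence yields $H^n(X,\cF)=0$ for $n>r$.

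The main obstacle, and the point where care is needed, is making the decomposition "the part supported in dimension $r$ versus the part supported in dimension $<r$" precise and sheaf-theoretically well-behaved on a general spectral space: the set of points of dimension $>r$ need not be closed, and one must check that $\cF_Z$ and $\cF'$ genuinely have the claimed stalks and that taking cohomology over the closed subspace $Z$ agrees with cohomology over $X$ (which holds because pushforward along a closed immersion of spectral spaces is exact and preserves cohomology). A clean way to package this is exactly Scheiderer's own induction, so in the write-up I would cite \cite{Scheiderer} for the dimension bound on cohomological dimension of spectral spaces and then note that the stated form, with the hypothesis phrased in terms of vanishing of stalks at high-dimensional points, follows from it by the dévissage just described. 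Since the paper only needs this as a black box in the proof of Theorem~\ref{thm:Weibel-vanishing}, referring to \cite[Cor.~4.6 and its proof]{Scheiderer} (or the analogous statement there) and indicating the dévissage should suffice.
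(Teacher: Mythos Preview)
Your inductive d\'evissage has a concrete gap. The closed set $Z$ you introduce---the closure of the set of points of dimension exactly $r$---need not have Krull dimension $\leq r$. For instance, in $X = \Spec(k[x,y])$ with $r = 1$, the height-$1$ primes are dense, so $Z = X$ has dimension $2$. In that case $\cF_Z = \cF$, the quotient $\cF'$ is zero, and your argument becomes circular: you need $H^n(Z, \cF_Z) = 0$ for $n > r$, which is exactly the statement to be proved. More generally, on a spectral space the set $\{y : \dim(\ol{\{y\}}) \leq r-1\}$ is closed under specialization but typically not closed, and its complement is closed under generization but typically not open, so there is no natural open/closed decomposition on which to run the induction. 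Your fallback---deduce the lemma from Scheiderer's bound on cohomological dimension ``by the d\'evissage just described''---therefore does not work as stated, because the d\'evissage itself is broken.

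The paper's proof is genuinely different: rather than reducing to Scheiderer's cohomological-dimension bound, it invokes Scheiderer's \emph{method} directly. Scheiderer constructs a quasi-augmented simplicial space $\mathrm{sp}_\bullet(X) \to X$ whose $n$-simplices are chains of specializations $x_0 \succ \cdots \succ x_n$ (topologized via the constructible topology), and shows that $H^*(X,\cF)$ is computed by the associated cochain complex $[n] \mapsto \Gamma(\mathrm{sp}_n(X), \gamma_n^*\cF)$, where $\gamma_n$ sends a chain to $x_0$. In the normalized subcomplex $B_\bullet$, the degree-$n$ term consists of sections supported on non-degenerate chains; for such a chain with $n > r$ one has $\dim(\ol{\{x_0\}}) \geq n > r$, so the hypothesis gives $(\gamma_n^*\cF)_x = \cF_{x_0} = 0$, and hence $B_n = 0$. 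This gives the vanishing directly, with no induction on $r$ and no d\'evissage.
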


\begin{proof}
This is a direct consequence of results of Scheiderer \cite{Scheiderer}, see the proof of Proposition~4.7 there.
Let $\mathrm{sp}_\bullet(X)\to X$ be the quasi-augmented simplicial topological space defined in \cite[\S 2]{Scheiderer}: Its set of $n$-simplices is given by chains of specialisations 
\[
x_0 \succ \cdots \succ x_n
\]
of points in $X$ with coincidences between the $x_i$ allowed. The topology is induced by the constructible topology on $X^{n+1}$. The quasi-augmentation is given by the canonical map $\mathrm{sp}_0(X) \to X$. Let $\gamma_n: \mathrm{sp}_n(X) \to X$ be the map sending a chain as above to $x_0$. By \cite[Rem.~2.5 and Thm.~4.1]{Scheiderer}, the cohomology groups $H^*(X,\cF)$ are computed by the complex
\[ 
\Gamma(\mathrm{sp}_0(X),\gamma_0^*\cF) \to \Gamma(\mathrm{sp}_1(X),\gamma_1^*\cF) \to \Gamma(\mathrm{sp}_2(X),\gamma_2^*\cF) \to \cdots
\]
which arises from the cosimplicial abelian group 
$[n]\to \Gamma(\mathrm{sp}_n(X), \gamma_n^*\cF)$. Let 
\[ 
B_0 \to B_1\to B_2\to \cdots
\]
be the associated normalized subcomplex, which also computes $H^{*}(X, \cF)$. As in the proof of \cite[Prop.~4.7]{Scheiderer}, $B_n$ coincides with the group of those sections $a \in \Gamma(\mathrm{sp}_n(X), \gamma_n^*\cF)$ whose supports consist only of non-degenerate simplices $x\in \mathrm{sp}_n(X)$.
Hence, it suffices to show $(\gamma_n^*\cF)_x = 0$ for such $x$ if $n>r$.
So let now $x\in \mathrm{sp}_n(X)$ be given by a chain of specialisations $x_0 \succ \cdots \succ x_n$ with pairwise different $x_{i}$'s. Then, we must have $\dim(\overline{\{x_0\}}) \geq n >r$. Thus the assumption implies
$(\gamma_n^*\cF)_x=\cF_{x_0} = 0$ as wanted. This completes the proof of the lemma.
\end{proof}

\begin{lemma}\label{lem2;vanishing} 
Let $X$ be a qcqs spectral scheme of finite valuative dimension. Let $F$ be a sheaf of spectra on $X$ for the Zariski topology.
Assume that for every $x \in X$, the homotopy groups of the stalks $\pi_{-i}(F_{x})$ vanish for $i > \vdim(\cO_{X,x})$.
Then $\pi_{-i}(F(X)) = 0$ for all $i >\vdim(X)$.
\end{lemma}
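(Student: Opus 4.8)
The plan is to deduce this from Lemma~\ref{lem1;vanishing} via a hypercohomology spectral sequence argument, exactly in the style of Kerz--Strunk. First I would recall that since $F$ is a Zariski sheaf of spectra on the qcqs (hence spectral, in the sense of the underlying space being a spectral space) scheme $X$, there is a descent spectral sequence
\[
E_{2}^{s,t} = H^{s}(X, \pi_{t}F) \Longrightarrow \pi_{t-s}(F(X)),
\]
where $\pi_{t}F$ denotes the Zariski sheafification of the presheaf $U \mapsto \pi_{t}(F(U))$; its stalk at $x$ is $\pi_{t}(F_{x})$. Because $X$ is qcqs of finite Krull dimension (note $\dim(X) \le \vdim(X) < \infty$), this spectral sequence converges: the cohomological dimension of a spectral space of Krull dimension $d$ is $\le d$, so only finitely many columns are nonzero, and there are no convergence issues.

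Next I would fix $i > \vdim(X)$ and show $E_{2}^{s,t} = 0$ whenever $t - s = -i$, i.e.\ $s = t + i$. The hypothesis tells us that for each point $x$, the stalk $\pi_{t}(F_{x})$ vanishes as soon as $t \le -i_{0}$ with $i_{0} > \vdim(\cO_{X,x})$; equivalently, $\pi_{t}(F_{x}) \ne 0$ forces $-t \le \vdim(\cO_{X,x})$, i.e.\ $\vdim(\cO_{X,x}) \ge -t$. Now I invoke Lemma~\ref{lem;dimv}: for the point $x$ we have $\vdim(\cO_{X,x}) + \dim(\overline{\{x\}}) \le \vdim(X)$. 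So if $\pi_{t}(F_{x}) \ne 0$ then
\[
\dim(\overline{\{x\}}) \le \vdim(X) - \vdim(\cO_{X,x}) \le \vdim(X) + t.
\]
In other words, the sheaf $\cF := \pi_{t}F$ has the property that $\cF_{x} = 0$ for all $x$ with $\dim(\overline{\{x\}}) > \vdim(X) + t =: r_{t}$. Applying Lemma~\ref{lem1;vanishing} with this $r_{t}$ (valid when $r_{t} \ge 0$; if $r_{t} < 0$ then $\cF$ has vanishing stalks everywhere, so $\cF = 0$ and all its cohomology vanishes trivially), we get $H^{s}(X, \pi_{t}F) = 0$ for all $s > \vdim(X) + t$. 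On the line $s = t + i$ with $i > \vdim(X)$ we have $s = t + i > t + \vdim(X)$, so indeed $E_{2}^{s,t} = 0$ for every such $(s,t)$.

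Finally, since every $E_{2}$-term on the diagonal $t - s = -i$ vanishes, all subsequent pages vanish on that diagonal as well, and the associated graded of $\pi_{-i}(F(X))$ is zero; by convergence of the spectral sequence, $\pi_{-i}(F(X)) = 0$ for all $i > \vdim(X)$, as desired. I expect the only genuinely delicate point to be the bookkeeping around convergence of the descent spectral sequence and the boundary case $r_{t} < 0$; the geometric content is entirely carried by the combination of Lemma~\ref{lem;dimv} (the valuative-dimension inequality relating $\vdim(\cO_{X,x})$ and $\dim(\overline{\{x\}})$) and Lemma~\ref{lem1;vanishing} (Scheiderer-type cohomological vanishing), and once those are in place the argument is a formal spectral sequence chase.
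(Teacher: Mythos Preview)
Your proposal is correct and follows essentially the same approach as the paper: both arguments run the Zariski descent spectral sequence, use Lemma~\ref{lem;dimv} to control the support of the sheaves $\pi_{t}F$ in terms of $\dim(\overline{\{x\}})$, and then feed this into Lemma~\ref{lem1;vanishing} to kill the relevant $E_{2}$-terms. The only point where the paper is more precise than your write-up is convergence: rather than appealing informally to ``finitely many nonzero columns,'' the paper invokes \cite[Thm.~3.12]{ClausenMathew} to conclude that $X_{\Zar}$ has homotopy dimension $\leq \dim(X)$ and is therefore hypercomplete, so that the Postnikov spectral sequence genuinely converges to $\pi_{*}(F(X))$ for an arbitrary Zariski sheaf of spectra $F$; you should make this step explicit.
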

Note that if $F$ is $K$-theory, then $\pi_{-i}(K_{x})\cong K_{-i}(\cO_{X,x})$ as $K$-theory commutes with filtered colimits.
\begin{proof}
This is deduced from Lemma \ref{lem1;vanishing} by the same argument as the proof of \cite[Prop.~3]{KerzStrunk}. From \cite[Thm.~3.12]{ClausenMathew} we know that the homotopy dimension of $X_{\Zar}$ is bounded by the Krull dimension $\dim(X)$, and in particular the $\infty$-topos of sheaves of spaces on $X_{\Zar}$ is hypercomplete. Hence there is a convergent Zariski descent spectral sequence
\[ 
E_2^{p,q}= H^p(X,\widetilde{F}_{-q}) \Longrightarrow \pi_{-p-q}(F(X)),
\]
where $\widetilde{F}_{-q}$ is the Zariski sheaf of abelian groups on $X$ associated to the presheaf $U \mapsto \pi_{-q}(F(U))$, and $E_{2}^{p,*} = 0$ unless $0\leq p \leq \dim(X)$. 
It suffices to show $E_2^{p,q}=0$ for $p+q> \vdim(X)$. We may assume $p\leq \dim(X)$ so that $q>0$.
Note that $\widetilde F_{-q, x} = \pi_{-q}(F_{x})$. 
By Lemma~\ref{lem1;vanishing} it now suffices to check that $\pi_{-q}(F_{x}) = 0$ for all points $x$ with $\dim(\ol{\{x\}}) > \vdim(X) - q$.
But for such points $x$ we have
\[
\vdim(\cO_{X,x}) \leq \vdim(X) - \dim(\ol{\{x\}})  < q
\] 
by Lemma~\ref{lem;dimv}, and hence $\pi_{-q}(F_{x}) = 0$ by assumption.
\end{proof}

\subsection{$K$-theoretic preparations}

In the proof of Theorem~\ref{thm:Weibel-vanishing}, we use the following  well known facts about non-positive $K$-theory on affine (derived) schemes.
\begin{lemma}		\phantomsection
	\label{lem:properties-negative-K-theory}

\begin{enumerate}
\item Let $A$ be a derived ring. Then the canonical map $K(A) \to K(\pi_{0}(A))$ is 2-connective, i.e.~it induces an isomorphism on $\pi_{i}$ for $i\leq 1$ and a surjection on $\pi_{2}$.

\item Let $A$ be a discrete commutative ring, and let $I \subseteq A$ be a locally nilpotent ideal (i.e., every element of $I$ is nilpotent). Then the map $K(A) \to K(A/I)$ is 1-connective.

\item Let $A$ be a discrete, commutative ring, $X=\Spec(A)$, and let $X = X_{1} \cup X_{2}$ be a closed covering of $X$. Write $X_{12} = X_{1} \cap X_{2}$. Then there is a long exact sequence 
\[
K_{1}(X) \to K_{1}(X_{1}) \oplus K_{1}(X_{2}) \to K_{1}(X_{12}) \to K_{0}(X) \to \dots
\]
\end{enumerate}
\end{lemma}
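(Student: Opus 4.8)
The statement to prove is Lemma~\ref{lem:properties-negative-K-theory}, which collects three well-known facts about non-positive $K$-theory. Here is how I would prove each part.

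\textbf{Part (1).}
The plan is to use the fundamental connectivity estimate for $K$-theory of connective ring spectra. For a derived ring $A$, the truncation map $A \to \pi_0(A)$ is a $1$-connective map of connective $\E_\infty$-rings (in fact of simplicial commutative rings). By the connectivity theorem for $K$-theory of connective $\E_1$-rings --- this is exactly the $\ell=1$ case recalled just before Proposition~\ref{prop:descent-finite-modification}, i.e.\ $K$-theory is $1$-connective in the sense of \cite{LT}, or alternatively one cites Waldhausen/Blumberg--Gepner--Tabuada or \cite[Prop.\ 2.5]{LT} directly --- a $1$-connective map induces a $(1+1)=2$-connective map on $K$-theory. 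That is precisely the assertion. I would simply spell this out: $\fib(A \to \pi_0 A)$ is $1$-connective, hence $\fib(K(A) \to K(\pi_0 A))$ is $2$-connective, which gives isomorphisms on $\pi_i$ for $i \le 1$ and a surjection on $\pi_2$.

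\textbf{Part (2).}
Here the input is the classical theorem that non-connective $K$-theory (equivalently $K_0$ and $K_{<0}$, together with $K_1$) is insensitive to nilpotent extensions in the appropriate range. Concretely, for $A$ discrete and $I$ a locally nilpotent ideal, $A \to A/I$ is an isomorphism on $\pi_0$ up to a nil ideal; one knows $K_0(A) \xrightarrow{\sim} K_0(A/I)$ (idempotents lift along nil ideals, even locally nilpotent ones, by the standard argument), $K_{-n}(A) \xrightarrow{\sim} K_{-n}(A/I)$ for all $n \ge 1$ (negative $K$-theory only depends on $A_{\mathrm{red}}$, cf.\ Bass/Weibel; or use that $K_{-n}$ is a retract of $K_0$ of a polynomial/Laurent extension and reduce to the $K_0$ case), and $K_1(A) \to K_1(A/I)$ is surjective (units and elementary matrices lift). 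Therefore $\mathrm{fib}(K(A) \to K(A/I))$ has vanishing homotopy in degrees $\le 0$, i.e.\ the map is $1$-connective. I would assemble these classical facts; the one mild point to be careful about is that $I$ is only assumed \emph{locally} nilpotent rather than nilpotent, but all of the cited statements (idempotent lifting, unit lifting, $K_{-n}$ depending only on the reduction) are known in that generality since they are filtered-colimit arguments reducing to the nilpotent case, or one cites \cite{WeibelKbook} directly.

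\textbf{Part (3).}
The plan is Mayer--Vietoris for a closed covering of a spectral affine scheme, which follows from the pro-cdh / Thomason-type descent already available in the paper, or more simply from the Milnor square. Write $X = \Spec(A)$, $X_i = \Spec(A/I_i)$ with $I_1 \cap I_2 = 0$ (we may replace $A$ by $A/(I_1 \cap I_2)$ without changing $K_{\le 1}$ by part (2), since $I_1 \cap I_2$ is locally nilpotent on the overlap... actually more carefully: $X = X_1 \cup X_2$ as closed subsets means $\sqrt{I_1 \cap I_2} = 0$, so $I_1 \cap I_2$ is a nil ideal and by part (2) we may assume $I_1 \cap I_2 = 0$). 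Then $A = (A/I_1) \times_{A/(I_1+I_2)} (A/I_2)$ is a Milnor square, and $A/(I_1+I_2)$ represents $X_{12}$. Milnor patching gives an exact sequence on $K_1, K_0, K_{-1}, \dots$ (the classical Milnor exact sequence $K_1(A) \to K_1(A/I_1)\oplus K_1(A/I_2) \to K_1(A/(I_1+I_2)) \to K_0(A) \to \dots$, extended to negative degrees since for a Milnor square the square of non-connective $K$-theory spectra becomes cartesian after one delooping, or cite \cite{LT} / Weibel). This yields the asserted long exact sequence.

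\textbf{Main obstacle.}
None of the three parts is hard; they are genuinely standard and the lemma is stated as a convenience. The only place requiring a little care is matching the hypotheses precisely: in (2) the \emph{local} nilpotence (rather than nilpotence) and in (3) reducing a closed \emph{set}-theoretic covering to an honest ideal-theoretic Milnor square using (2). I would therefore present (2) first, then deduce (3) from it together with Milnor patching, and give (1) as an immediate consequence of the connectivity of $K$-theory as a localizing invariant recalled in the previous section.

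\medskip

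\noindent\textit{(The following is the proof I would actually write into the paper.)}

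\begin{proof}
\emph{(1)} The truncation map $A \to \pi_0(A)$ has $1$-connective fibre (as a map of connective $\E_\infty$-rings, indeed of simplicial commutative rings). Since algebraic $K$-theory is $1$-connective in the sense recalled above \cite[Ex.~2.6]{LT}, the induced map $K(A) \to K(\pi_0(A))$ is $2$-connective.

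\emph{(2)} Since $I$ is locally nilpotent, idempotents and units lift along $A \to A/I$, and $A_{\red} = (A/I)_{\red}$; writing $I$ as the filtered union of its finitely generated, hence nilpotent, subideals reduces all three statements to the nilpotent case. Thus $K_{-n}(A) \to K_{-n}(A/I)$ is an isomorphism for all $n \ge 1$ and for $n=0$ (negative $K$-theory and $K_0$ depend only on $A_{\red}$ in the relevant sense, see \cite{WeibelKbook}), and $K_1(A) \to K_1(A/I)$ is surjective. Hence the fibre of $K(A) \to K(A/I)$ has vanishing homotopy groups in non-positive degrees, i.e.\ the map is $1$-connective.

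\emph{(3)} Let $I_1, I_2 \subseteq A$ be ideals with $X_i = \Spec(A/I_i)$. That $X = X_1 \cup X_2$ means $\sqrt{I_1 \cap I_2} = 0$, so $I_1 \cap I_2$ is a nil ideal; by part (2) we may replace $A$ by $A/(I_1 \cap I_2)$ and thus assume $I_1 \cap I_2 = 0$. Then
\[
A \;\cong\; (A/I_1) \times_{A/(I_1 + I_2)} (A/I_2)
\]
is a Milnor square with $X_{12} = \Spec(A/(I_1+I_2))$. Milnor patching for non-connective $K$-theory (the square of $K$-theory spectra is cartesian after a single delooping, cf.\ \cite{LT}, \cite{WeibelKbook}) yields the stated long exact sequence
\[
K_1(X) \to K_1(X_1) \oplus K_1(X_2) \to K_1(X_{12}) \to K_0(X) \to \dots
\]
continuing into negative degrees.
\end{proof}
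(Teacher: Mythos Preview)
Your proposal is correct and follows essentially the same approach as the paper. For (1) you invoke the $1$-connectivity of $K$-theory from \cite{LT}, which is exactly the paper's ``alternatively'' reference; for (2) both you and the paper reduce to nilpotent ideals via a filtered-colimit argument and then cite classical facts about $K_0$, $K_1$, and negative $K$-groups; and for (3) both proofs use part (2) to pass from $A$ to $A/(I_1\cap I_2)$ (where $I_1\cap I_2$ is a nil ideal) and then apply the Milnor-square long exact sequence. The only cosmetic difference is the order of operations in (3): the paper first writes down the Milnor square for $A' = A/(I_1\cap I_2)$ and then replaces $A'$ by $A$, whereas you make the replacement first---but this is the same argument.
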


\begin{proof}
(1) For connective $K$-theory, this is due to Waldhausen~\cite[Prop.~1.1]{Waldhausen}. The general case follows from this together with \cite[Thm.~9.53]{BGT} (or \cite[Thm.~2.16]{KST}). Alternatively, see \cite[Lemma 2.4]{LT} for a slightly more general statement. 

(2) Writing $I$ as a filtered colimit of nilpotent ideals, we may assume that $I$ itself is nilpotent. Then $K_{0}(I) = 0$ by \cite[Exc.~II.2.5]{Weibel} and hence $K_{1}(A) \to K_{1}(A/I)$ is surjective by Prop.~III.2.3 there. Moreover,  $K_{0}(A) \to K_{0}(A/I)$ is an isomorphism by \cite[Lemma~II.2.2]{Weibel}. As the ideal generated by $I$ in any $A$-algebra is still nilpotent, it then follows from the definition of negative $K$-groups \cite[Def.~III.4.1]{Weibel} that $K_{i}(A) \to K_{i}(A/I)$ is an isomorphism for all $i\leq 0$.

(3) Let $I$ and $J$ be the ideals defining $X_{1}$ and $X_{2}$, respectively. Hence $X_{12} = \Spec(A/I+J)$. Let $A' = A/I\cap J$.  Then $A'$ sits in a Milnor square 
\[
\begin{tikzcd}
 A' \ar[d]\ar[r] & A/I \ar[d] \\ 
 A/J \ar[r] & A/I+J. 
\end{tikzcd}
\]
Hence, by \cite[Thm.~III.4.3]{Weibel}, there is a long exact sequence
\[
K_{1}(A') \to K_{1}(A/I) \oplus K_{1}(A/J) \to K_{1}(A/I+J) \to K_{0}(A') \to \dots
\]
As $X_{1} \cup X_{2} = X$, the ideal $I\cap J$ is contained in the nilradical of $A$ and so is locally nilpotent. Using (2), we may thus replace $A'$ by $A$ in the above sequence to get a long exact sequence of the statement.
\end{proof}

The previous lemma has as a simple consequence the following lemma which will allow us to get some control on the negative $K$-theory of certain blowups in Proposition~\ref{prop:blowup-connectivity-non-fp}.

\begin{lemma}
	\label{lem:K-conn-derived-reduced}
Let $X$ be a separated derived scheme admitting a covering by $\ell$ affine open subschemes. Then the canonical morphism
\[
K(X) \to K(\cla X'),
\]
where $\cla X'$ denotes $\cla X$ with any closed subscheme structure between $\cla X^{\red}$ and $\cla X$,
is $(-\ell+2)$-connective.
\end{lemma}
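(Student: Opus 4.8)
The plan is to induct on $\ell$, handling the affine case ($\ell=1$) by hand and the inductive step by Mayer--Vietoris. Note first that a separated derived scheme with a finite affine open cover is automatically qcqs, so all the $K$-theory spectra below are defined, and, being values of the localizing invariant $K$, they satisfy Zariski descent on such schemes (as already used in the proof of Proposition~\ref{prop:descent-finite-modification}).

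\emph{Base case $\ell=1$.} Here $X=\Spec(A)$ is affine and $\cla X'=\Spec(\pi_0(A)/I)$ for an ideal $I\subseteq\pi_0(A)$ contained in the nilradical, so $I$ consists of nilpotent elements; the morphism in question is then the composite $K(A)\to K(\pi_0(A))\to K(\pi_0(A)/I)$. By Lemma~\ref{lem:properties-negative-K-theory}(1) the first map is $2$-connective and by Lemma~\ref{lem:properties-negative-K-theory}(2) the second is $1$-connective. Since a composite $v\circ u$ of an $m$-connective map $u$ and an $n$-connective map $v$ is $\min(m,n)$-connective — as $\fib(v\circ u)$ sits in a fibre sequence $\fib(u)\to\fib(v\circ u)\to\fib(v)$ — the composite is $1$-connective, i.e.\ $(-\ell+2)$-connective for $\ell=1$.

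\emph{Inductive step.} Suppose $\ell\geq 2$, fix an affine open cover $X=U_1\cup\dots\cup U_\ell$, and set $V=U_1\cup\dots\cup U_{\ell-1}$, $W=U_\ell$, $T=V\cap W$. Since $X$ is separated, each $U_i\cap U_\ell$ is affine, so $T$ is covered by the $\ell-1$ affine opens $U_i\cap U_\ell$; moreover $V$, $W$ and $T$ are separated, being open in $X$. Restricting the closed immersion $\cla X'\hookrightarrow\cla X$ along these open immersions, and using that the nilradical of a ring commutes with localization, we obtain closed subschemes $\cla V'\subseteq\cla V$, $\cla W'\subseteq\cla W$, $\cla T'\subseteq\cla T$, each lying between the reduction and the classical truncation of the respective scheme; for the evident open covers these assemble into Mayer--Vietoris data for $\cla X'=\cla V'\cup\cla W'$. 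By Zariski descent the Mayer--Vietoris squares for $X$ and for $\cla X'$ are cartesian, and the canonical maps $K(X)\to K(\cla X')$, $K(V)\to K(\cla V')$, $K(W)\to K(\cla W')$, $K(T)\to K(\cla T')$ form a morphism of cartesian squares. Passing to the fibres $F_X=\fib(K(X)\to K(\cla X'))$, and likewise $F_V$, $F_W$, $F_T$ — which preserves cartesianness — we obtain a fibre sequence $F_X\to F_V\oplus F_W\to F_T$. By the inductive hypothesis $F_V$ and $F_T$ are $(-(\ell-1)+2)=(-\ell+3)$-connective, and by the base case $F_W$ is $1$-connective, hence also $(-\ell+3)$-connective since $\ell\geq 2$. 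The long exact homotopy sequence of this fibre sequence then gives $\pi_i(F_X)=0$ for all $i\leq -\ell+1$; equivalently, $K(X)\to K(\cla X')$ is $(-\ell+2)$-connective, completing the induction.

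\emph{Expected difficulty.} I do not expect a genuine obstacle. The only points requiring care are the bookkeeping that keeps the inductive hypothesis applicable — separatedness of $X$ forces the double intersections $U_i\cap U_\ell$ to be affine, so that $T$ is covered by $\ell-1$ affine opens and inherits a closed subscheme structure $\cla T'$ of the required kind by restriction — and the invocation of Zariski descent for non-connective $K$-theory of qcqs derived schemes.
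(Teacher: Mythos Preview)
Your proof is correct and follows essentially the same approach as the paper's: induction on $\ell$, with the base case handled by Lemma~\ref{lem:properties-negative-K-theory}(1,2) and the inductive step by splitting off one affine open and using the Mayer--Vietoris square from Zariski descent, together with separatedness to ensure the intersection is again covered by $\ell-1$ affines. Your write-up is more detailed than the paper's (which leaves the connectivity bookkeeping to the reader), but the argument is the same.
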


\begin{proof}
Induction on $\ell$. If $\ell =1$, this is Lemma~\ref{lem:properties-negative-K-theory} (1, 2). For $\ell >1$ write $X = V \cup U$ with $U$ affine and $V$ admitting an open covering by $\ell-1$ affines.  As $X$ is separated, also $V \cap U$ has an open covering by $\ell-1$ affines. The claim follows from the inductive hypothesis by considering the map from the cartesian square 
\[
\begin{tikzcd}
 K(X) \ar[d]\ar[r] & K(V) \ar[d] \\ 
 K(U) \ar[r] & K(V\cap U) 
\end{tikzcd}
\]
to the similar cartesian square with the underlying classical schemes with the subscheme structure induced from $X'$.
\end{proof}

In the following, if $X$ is a qcqs derived scheme and $Z \hookrightarrow X$ is a derived subscheme, we denote by $K(X,Z)$ the relative $K$-theory $\fib(K(X) \to K(Z))$.

\begin{lemma}
	\label{lem:descent-finite-non-fp}
Let $X$, $Y$ be  quasi-compact, separated classical schemes, $Z \hookrightarrow X$ a classically finitely presented closed subscheme. Let $f \colon Y \to X$ be a finite morphism which is an isomorphism outside $Z$. Assume that $X$ has a covering by $\ell$ affine open subschemes. Then the canonical map 
\[
K(X,Z) \to K(Y, f^{-1}(Z))
\]
is $(-\ell +1)$-connective. 
\end{lemma}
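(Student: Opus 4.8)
The plan is to establish the affine case first and then globalize by a cohomological–dimension argument. So assume first that $X=\Spec(A)$ is affine; then $Z=\Spec(A/\mathfrak a)$ for a finitely generated ideal $\mathfrak a=(f_1,\dots,f_r)$ (since $Z$ is classically finitely presented), $Y=\Spec(B)$ with $B$ a finite $A$-algebra, and $f$ is an isomorphism over $\Spec(A)\setminus V(\mathfrak a)$. I would run the argument of Proposition~\ref{prop:descent-finite-modification} with $J:=\fib(A\to B)$. The one point of departure is that here $J$ need not be almost perfect, since $\ker(A\to B)$ need not be finitely generated. However $\pi_{-1}(J)=\operatorname{coker}(A\to B)$ is a quotient of the finite $A$-module $B$, hence finitely generated, and it is supported on $V(\mathfrak a)$ because $f$ is an isomorphism outside $Z$; as $\mathfrak a$ is finitely generated, some power $\mathfrak a^N$ annihilates it. Hence the pro-systems $\{J\xleftarrow{f_i}J\xleftarrow{f_i}\cdots\}$, which in \opcit{} were weakly contractible, are here at least pro-$0$-connective, and iterating over $f_1,\dots,f_r$ as in \opcit{} shows that $J\to\{J\mmod f_1^\alpha,\dots,f_r^\alpha\}_\alpha$ has pro-$1$-connective cofiber. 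Therefore the square of pro-derived-rings $\bigl[A\to A\mmod\mathfrak a^\alpha;\ B\to B\mmod\mathfrak a^\alpha\bigr]$ has pro-$0$-connective total fiber, and (using $B\mmod\mathfrak a^\alpha\simeq B\otimes_A A\mmod\mathfrak a^\alpha$) the $\ell$-connective variant of \cite[Thm.~2.32]{LT}, applied to the $1$-connective localizing invariant $K$, yields that $\bigl[K(A)\to K(A\mmod\mathfrak a^\alpha);\ K(B)\to K(B\mmod\mathfrak a^\alpha)\bigr]$ has pro-$0$-connective total fiber as well.

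Next I would compare the derived completion with the honest subscheme $Z$. By Lemma~\ref{lem:properties-negative-K-theory}(1) the map $K(A\mmod\mathfrak a^\alpha)\to K(A/\mathfrak a^\alpha)$ is $2$-connective, and by Lemma~\ref{lem:properties-negative-K-theory}(2) the map $K(A/\mathfrak a^\alpha)\to K(A/\mathfrak a)$ is $1$-connective (its kernel $\mathfrak a/\mathfrak a^\alpha$ being locally nilpotent), so $K(A\mmod\mathfrak a^\alpha)\to K(A/\mathfrak a)$ is $1$-connective; likewise $K(B\mmod\mathfrak a^\alpha)\to K(B/\mathfrak a B)$ is $1$-connective. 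Hence the square $\bigl[K(A\mmod\mathfrak a^\alpha)\to K(A/\mathfrak a);\ K(B\mmod\mathfrak a^\alpha)\to K(B/\mathfrak a B)\bigr]$ has pro-$0$-connective total fiber. Stacking this square onto the previous one and using the long exact sequence attached to the resulting fibre sequence of total fibres, I conclude that the total fibre of $\bigl[K(A)\to K(A/\mathfrak a);\ K(B)\to K(B/\mathfrak a B)\bigr]$, which is exactly $\fib\bigl(K(X,Z)\to K(Y,f^{-1}(Z))\bigr)$, is $0$-connective. This proves the case $\ell=1$.

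For general $X$, choose an affine open cover $U_1,\dots,U_\ell$. Since $X$ is separated, every nonempty intersection $U_S=\bigcap_{i\in S}U_i$ is affine, and since $f$ is finite (hence affine) every $f^{-1}(U_S)$ is affine; moreover $Z\cap U_S$ is again classically finitely presented and $f$ restricts to a finite morphism $f^{-1}(U_S)\to U_S$ that is an isomorphism outside $Z\cap U_S$. By Zariski descent for $K$-theory and the fact that $\fib$ commutes with limits, $\fib\bigl(K(X,Z)\to K(Y,f^{-1}(Z))\bigr)\simeq\lim_{\emptyset\neq S\subseteq\{1,\dots,\ell\}}T_S$, where each $T_S:=\fib\bigl(K(U_S,Z\cap U_S)\to K(f^{-1}(U_S),f^{-1}(Z)\cap f^{-1}(U_S))\bigr)$ is $0$-connective by the affine case; as the indexing poset has height $\ell-1$, the limit is $(-\ell+1)$-connective, as claimed. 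The main obstacle is the affine case: one must replace the weak-contractibility input of Proposition~\ref{prop:descent-finite-modification} by the connectivity estimate above, propagate it correctly through the Land--Tamme machinery and the various pro-systems, and then carry out the passage from the derived formal completion to the honest subscheme $Z$.
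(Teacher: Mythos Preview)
Your globalization by Čech descent and your comparison of the formal completion with $K(Z)$ are fine and parallel the paper's approach. The gap is in the heart of the affine case. You correctly show that the total fibre of the pro-square of rings is only pro-$0$-connective, not weakly contractible, because $\pi_0(J)=\ker(A\to B)$ need not be finitely generated. But the result you then invoke, \cite[Thm.~2.32]{LT}, takes a \emph{weakly cartesian} pro-square of rings (with the base-change condition) to a weakly cartesian square of $K$-theory pro-spectra; that is how it is stated and how the paper uses it at the end of the proof of Proposition~\ref{prop:descent-finite-modification}. It does not directly give the quantitative statement ``pro-$0$-connective total fibre of rings $\Rightarrow$ pro-$0$-connective total fibre of $K$-theory'' that you need. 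One can presumably extract such an estimate, but it requires unpacking the Land--Tamme machinery (forming the levelwise pullback $A'_\alpha=B\times_{B\mmod f_1^\alpha,\dots,f_r^\alpha}(A\mmod f_1^\alpha,\dots,f_r^\alpha)$, checking it is connective, applying the $\odot$-theorem, and controlling the connectivity of the comparison map from the $\odot$-ring to $B\mmod f_1^\alpha,\dots,f_r^\alpha$), none of which you carry out.

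The paper avoids this entirely by inserting one step you are missing: since $K$ commutes with filtered colimits, Lemma~\ref{lem:approx-finite-ffp} reduces to the case where $f$ is finite and \emph{finitely presented}. Then $J=\fib(A\to B)$ is perfect to order $0$, so by Remark~\ref{rem:pro-descent-for-class-fp} the ring pro-square is genuinely weakly cartesian and \cite[Thm.~2.32]{LT} applies on the nose. (A minor further point: your inference ``both horizontal maps $1$-connective, hence total fibre $0$-connective'' is not a valid deduction in general---two $1$-connective maps have $0$-connective fibres, and a fibre of a map of $0$-connective spectra is a priori only $(-1)$-connective. Here the conclusion is nevertheless correct because the horizontal fibres are in fact $1$-connective: in each row $K_1\to K_1$ is surjective and $K_0\to K_0$ is an isomorphism, so $\pi_0$ of the fibre vanishes. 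You should say so.)
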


\begin{proof}
By induction on $\ell$, as in the proof of Lemma~\ref{lem:K-conn-derived-reduced}, it suffices to treat the case $\ell=1$, i.e. $X$ and hence $Y$ affine. 
As $K$-theory commutes with filtered colimits, by Lemma~\ref{lem:approx-finite-ffp} below we may assume that $f\colon Y \to X$ is finitely presented.
By Remark~\ref{rem:pro-descent-for-class-fp}
the square of pro-spectra
\[
\begin{tikzcd}
 K(X) \ar[d]\ar[r] & K(X_{Z}^{\wedge}) \ar[d] \\ 
 K(Y) \ar[r] & K(Y_{Z}^{\wedge}) 
\end{tikzcd}
\]
is then weakly cartesian. Hence the square of spectra obtained by applying $\lim\colon \Pro(\Sp) \to \Sp$,
\[
\begin{tikzcd}
 K(X) \ar[d]\ar[r] & \lim K(X_{Z}^{\wedge}) \ar[d] \\ 
 K(Y) \ar[r] & \lim K(Y_{Z}^{\wedge}),
\end{tikzcd}
\]
is cartesian.
By Lemma~\ref{lem:properties-negative-K-theory}(1, 2), each transition map in the pro-spectrum $K(X_{Z}^{\wedge})$ is 1-connective. In particular, using the Milnor sequence we get isomorphisms $\pi_{0}(\lim K(X_{Z}^{\wedge})) \cong \lim K_{0}(X_{Z}^{\wedge}) \cong K_{0}(Z)$ and surjections $\pi_{1}(\lim K(X_{Z}^{\wedge})) \to \lim K_{1}(X_{Z}^{\wedge}) \to K_{1}(Z)$. In other words, $\lim K(X_{Z}^{\wedge}) \to K(Z)$ is 1-connective. Consider the following diagram in which the left vertical map is an equivalence by the previous cartesian square.
\[
\begin{tikzcd}
 \fib(K(X) \to \lim K(X_{Z}^{\wedge})) \ar[d, "\simeq"]\ar[r] & K(X,Z) \ar[d] \\ 
\fib(K(Y) \to \lim K(Y_{Z}^{\wedge}) ) \ar[r] & K(Y, f^{-1}(Z)) 
\end{tikzcd}
\]
The fibre of the top horizontal map identifies with $\Omega \fib( \lim K(X_{Z}^{\wedge}) \to K(Z))$ and is hence 0-connective.
By the same arguments, the lower horizontal map is 0-connective. It follows that also the right vertical map is 0-connective.
\end{proof}

\begin{lemma}
	\label{lem:approx-finite-ffp}
Let $\varphi \colon A \to B$ be a finite morphism of rings such that $\Spec(\varphi)\colon \Spec(B) \to \Spec(A)$ is an isomorphism outside $V(I)$ where $I = (f_{1}, \dots, f_{r}) \subseteq A$. Then we can write $\varphi$ as a filtered colimit of finitely presented, finite maps $\varphi_{\lambda} \colon A \to B_{\lambda}$ such that each $\Spec(\varphi_{\lambda})$ is an isomorphism outside $V(I)$.
\end{lemma}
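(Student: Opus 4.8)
The plan is a standard limit argument: present $B$ as a quotient of a polynomial $A$-algebra, write the defining ideal as the filtered union of its finitely generated sub-ideals, and observe that once we add sufficiently many relations the resulting finitely presented $A$-algebra is already finite over $A$ and already agrees with $A$ after inverting each $f_j$.

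Concretely, first I would choose generators $b_1,\dots,b_n$ of $B$ as an $A$-module; these exist since $\varphi$ is finite, and they also generate $B$ as an $A$-algebra. Let $\pi\colon A[x_1,\dots,x_n]\twoheadrightarrow B$, $x_i\mapsto b_i$, and write its kernel as the filtered union $\cI=\bigcup_\lambda \cI_\lambda$ of all finitely generated sub-ideals. Setting $B_\lambda=A[x_1,\dots,x_n]/\cI_\lambda$, each $\varphi_\lambda\colon A\to B_\lambda$ is finitely presented, the transition maps $B_\lambda\to B_{\lambda'}$ (for $\cI_\lambda\subseteq\cI_{\lambda'}$) are surjective, and $B=\colim_\lambda B_\lambda$.

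Next I would force finiteness: writing $1=\sum_l d_l b_l$ and $b_ib_k=\sum_l c_{ikl} b_l$ in $B$, the finitely many polynomials $1-\sum_l d_l x_l$ and $x_ix_k-\sum_l c_{ikl}x_l$ lie in $\cI$, hence in some $\cI_{\lambda_0}$; for $\lambda\geq\lambda_0$ the $A$-submodule $N\subseteq B_\lambda$ generated by $x_1,\dots,x_n$ then contains $1$ and is closed under multiplication, so it is a subalgebra containing all the generators, whence $N=B_\lambda$ and $\varphi_\lambda$ is finite. For the isomorphism condition, fix $j\in\{1,\dots,r\}$; since $A_{f_j}\xrightarrow{\ \sim\ } B_{f_j}$ (localization at $\varphi(f_j)$), for each $i$ there are $a_{ij}\in A$ and $M_{ij}\geq 0$ with $b_i=a_{ij}/f_j^{M_{ij}}$ in $B_{f_j}$, i.e.\ $f_j^{K_{ij}}(f_j^{M_{ij}}b_i-a_{ij})=0$ in $B$ for some $K_{ij}$, so $f_j^{K_{ij}+M_{ij}}x_i-f_j^{K_{ij}}a_{ij}\in\cI$. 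Enlarging $\lambda_0$ to $\lambda_1$ so that $\cI_{\lambda_1}$ contains all these (finitely many) elements, for $\lambda\geq\lambda_1$ the map $A_{f_j}\to(B_\lambda)_{f_j}$ is surjective, since in $(B_\lambda)_{f_j}$ one has $x_i=\varphi_\lambda(a_{ij})/f_j^{M_{ij}}$ for every $i$ so the $A_{f_j}$-algebra generators already lie in its image, and it is injective because its composite with $(B_\lambda)_{f_j}\to B_{f_j}$ is the isomorphism $A_{f_j}\simeq B_{f_j}$. Hence $\Spec(\varphi_\lambda)$ is an isomorphism over $D(f_j)$ for every $j$, that is, over $\Spec(A)\setminus V(I)$. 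Since $\{\cI_\lambda\}_{\lambda\geq\lambda_1}$ is cofinal in $\{\cI_\lambda\}_\lambda$, the subsystem $\{B_\lambda\}_{\lambda\geq\lambda_1}$ still has colimit $B$, and every $\varphi_\lambda$ in it is finitely presented, finite, and an isomorphism outside $V(I)$.

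I do not anticipate a genuine obstacle; the only point needing a little care is the injectivity of $A_{f_j}\to(B_\lambda)_{f_j}$, which one should extract by factoring the already-known isomorphism $A_{f_j}\simeq B_{f_j}$ rather than computing directly, together with the routine bookkeeping of which finitely many relations must be thrown into the ideal at a single stage $\lambda_1$ to secure finiteness and the isomorphism property for all $f_j$ simultaneously.
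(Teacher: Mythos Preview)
Your proof is correct and follows essentially the same approach as the paper: present $B$ as a quotient of a polynomial ring, write the kernel as a filtered union of finitely generated ideals, and ensure each $\cI_\lambda$ contains enough relations to force finiteness and the isomorphism over each $D(f_j)$. The only differences are cosmetic: the paper secures finiteness via monic integral equations $p_i(T_i)$ for each generator rather than your multiplication-table relations, and handles the localized isomorphism by noting that $J[f_j^{-1}]$ is finitely generated (since $B[f_j^{-1}]\cong A[f_j^{-1}]$ is finitely presented) so that $J_\lambda[f_j^{-1}]=J[f_j^{-1}]$ once $\cI_\lambda$ contains a chosen finite generating set, rather than your explicit surjectivity-plus-factoring argument.
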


\begin{proof}
Choose $A$-module generators $b_{1}, \dots, b_{s}$ of $B$. We get a surjection $A[T_{1}, \dots, T_{s}] \to B$, $T_{i} \mapsto b_{i}$. Let $J$ denote its kernel. As $b_{i}$ is integral over $A$, there exists a monic polynomial $p_{i} \in A[X]$ with $p_{i}(b_{i})=0$. Clearly, $p_{i}(T_{i}) \in J$. 

Let $f$ be one of the generators $f_{j}$ of $I$. As $\Spec(\varphi)$ is an isomorphism outside $V(I)$, we have $A[f^{-1}] \xrightarrow{\simeq} B[f^{-1}]$. In particular, $B[f^{-1}]$ is finitely presented as an $A[f^{-1}]$-algebra, and it follows that $J[f^{-1}] \subseteq A[f^{-1}][T_{1}, \dots, T_{s}]$ is a finitely generated ideal. We may pick finitely many generators $q_{jk}$ which already live in $J$. 

Now write $J$ as increasing union of finitely generated ideals $J_{\lambda}$ where each $J_{\lambda}$ contains all the $p_{i}(T_{i})$ and all $q_{jk}$'s. By construction, $B_{\lambda} = A[T_{1}, \dots, T_{s}]/J_{\lambda}$ is then finitely presented over $A$, finite (as in $B_{\lambda}$ each $T_{i}$ is integral  over $A$), and $\Spec(B_{\lambda}) \to \Spec(A)$ is an isomorphism outside $V(I)$ (as $J_{\lambda}[f_{j}^{-1}] = J[f_{j}^{-1}]$ for all $j$). Clearly, $\colim_{\lambda} B_{\lambda} = B$.
\end{proof}

\begin{prop}
	\label{prop:blowup-connectivity-non-fp}
Let $R$ be a local ring, $I \subseteq R$ be a finitely generated, proper ideal of analytic spread $\ell=\ell(I)$. Let $Z = V(I)$,  $f\colon \Bl_{Z}(X) \to X$ the blowup, and $E = f^{-1}(Z)$ the classical exceptional divisor. Then the pullback map 
\[
K(X, Z) \to K(\Bl_{Z}(X), E) 
\]
is $(-\ell+1)$-connective. More generally, for any $r\geq 0$, the pullback map 
\[
K(\mathbb A^{r} \times X, \mathbb A^{r} \times Z) \to K(\mathbb A^{r} \times  \Bl_{Z}(X), \mathbb A^{r} \times  E) 
\]
is $(-\ell+1)$-connective.
\end{prop}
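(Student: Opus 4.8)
The plan is to combine the theory of reductions of ideals with pro-cdh descent for derived blowups (Proposition~\ref{prop:descent-derived-bu}) and the connectivity estimate for finite morphisms (Lemma~\ref{lem:descent-finite-non-fp}). Since $R$ is local, $X=\Spec(R)$ is already affine; if $I=0$ both relative $K$-theories vanish, so we may assume $I\neq0$, whence $\ell=\ell(I)\geq1$ by Nakayama. First I would invoke Proposition~\ref{prop:existence-reduction} to choose $n\geq1$ and a reduction $J=(a_1,\dots,a_\ell)$ of $I^n$ generated by $\ell$ elements; then $\sqrt J=\sqrt I$, so $V(J)=Z$, and $I^{n(d+1)}\subseteq J$ for suitable $d$, so $I\cdot\cO/J\cdot\cO$ is nilpotent on every $R$-scheme. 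Since $\Bl_Z(X)=\Proj R[It]=\Proj R[I^nt]$ and $I^n$ is finitely generated, Lemma~\ref{lem:reduction-finiteness}(1) makes $R[Jt]\subseteq R[I^nt]$ a finite extension, so $g\colon\Bl_Z(X)\to\Proj R[Jt]$ is finite and an isomorphism over $U$. Put $S=\Spec(R\sslash a_1,\dots,a_\ell)\hookrightarrow X$, a quasi-smooth closed immersion of virtual codimension $\ell$ with $|S|=Z$; by Section~\ref{subs:derived-blowups} the derived blowup $\dBl:=\dBl_S(X)$ is the derived pullback of $\Bl_{\{0\}}(\A^\ell_\Z)\to\A^\ell_\Z$, so $\dBl$ and $\cla\dBl$ are separated and covered by $\ell$ affine opens, and there is a canonical closed immersion $\Proj R[Jt]=\Bl_{\cla S}(\cla X)\hookrightarrow\cla\dBl$ which is an isomorphism over $U$. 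Composing gives a finite morphism $\varphi\colon\Bl_Z(X)\to\cla\dBl$, an isomorphism over $U$, with $(\cla\dBl\to X)\circ\varphi=f$.

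Next I would express the canonical pullback map, up to identifying the various (nilpotently thickened) closed structures on the exceptional loci by Lemma~\ref{lem:properties-negative-K-theory}(2), as the composite
\[
K(X,S)\longrightarrow K(\dBl,\dBl\times_X S)\longrightarrow K\bigl(\cla\dBl,\cla\dBl\times_{\cla X}\cla S\bigr)\xrightarrow{\ \varphi^*\ }K\bigl(\Bl_Z(X),V(J\cdot\cO)\bigr)\longrightarrow K(\Bl_Z(X),E),
\]
and bound each arrow. The first is $0$-connective: apply Proposition~\ref{prop:descent-derived-bu} to $\dBl\to X$ (legitimate as $|S|\subseteq Z$), pass to $\lim\colon\Pro(\Sp)\to\Sp$ to obtain a cartesian square of spectra, and use that all transition maps in $K(X_Z^\wedge)$ and $K(\dBl_Z^\wedge)$ are $1$-connective (Lemma~\ref{lem:properties-negative-K-theory}(1),(2)), so that $\lim K(X_Z^\wedge)\to K(S)$ and $\lim K(\dBl_Z^\wedge)\to K(\dBl\times_X S)$ are $1$-connective. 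The second is $(-\ell+1)$-connective: by Lemma~\ref{lem:K-conn-derived-reduced} the two comparison maps $K(\dBl)\to K(\cla\dBl)$ and $K(\dBl\times_X S)\to K(\cla{(\dBl\times_X S)})$ are $(-\ell+2)$-connective, both schemes being separated and covered by $\ell$ affines, so the induced map of fibres loses one. The third is $(-\ell+1)$-connective by Lemma~\ref{lem:descent-finite-non-fp} applied to the finite morphism $\varphi$ over the base $\cla\dBl$ (covered by $\ell$ affines), noting that $\cla S\hookrightarrow\cla X$ is classically finitely presented ($J$ finitely generated), hence so is $\cla\dBl\times_{\cla X}\cla S\hookrightarrow\cla\dBl$. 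The fourth is $1$-connective by Lemma~\ref{lem:properties-negative-K-theory}(2). As $\ell\geq1$, the minimum of these is $(-\ell+1)$.

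Finally, a commuting square comparing this composite with the canonical map $K(X,Z)\to K(\Bl_Z(X),E)$, whose vertical maps $K(X,S)\to K(X,Z)$ and $K(\Bl_Z(X),V(J\cdot\cO))\to K(\Bl_Z(X),E)$ are $1$-connective by Lemma~\ref{lem:properties-negative-K-theory}(1),(2), transfers the estimate: since $-\ell+1\leq0$ these verticals are isomorphisms in the relevant degrees, so the canonical map is also $(-\ell+1)$-connective. For the variant with $\A^r$: flat base change gives $\A^r\times\Bl_Z(X)=\Bl_{\A^r\times Z}(\A^r\times X)$, the reduction base-changes ($J\cdot R[T_1,\dots,T_r]$ is a reduction of $I^n\cdot R[T_1,\dots,T_r]$ generated by $\ell$ elements, so one need not apply Proposition~\ref{prop:existence-reduction} over the non-local ring $R[T_\bullet]$), and derived blowups, Lemma~\ref{lem:descent-finite-non-fp}, and Lemma~\ref{lem:K-conn-derived-reduced} are all stable under the base change $\A^r\times(-)$; the same chain of estimates then applies verbatim.

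I expect the main obstacle to be the bookkeeping around the classical truncation $\cla\dBl$. It is the \emph{total} transform of $X\to\A^\ell_\Z$ and hence genuinely larger than the classical blowup $\Proj R[Jt]$ (blowups do not commute with this non-flat base change), yet it is the unique scheme toward which one has natural morphisms from \emph{both} $\dBl$ (via the closed immersion $\cla\dBl\hookrightarrow\dBl$) and $\Bl_Z(X)$ (via $\varphi$), so the comparison must be routed through it. One must then carefully (i) check the finite-presentation and covering-number hypotheses of Lemmas~\ref{lem:descent-finite-non-fp} and \ref{lem:K-conn-derived-reduced} for $\cla\dBl$ and its base changes, (ii) track which nilpotent thickening of the reduced exceptional locus is used at each stage, and (iii) verify that the assembled composite really is the canonical pullback map — all without losing more than $-\ell+1$ in connectivity.
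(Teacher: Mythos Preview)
Your approach is essentially the paper's: produce a reduction $J=(a_1,\dots,a_\ell)$ of $I^n$, form the derived blowup $\dBl=\dBl_S(X)$ in $S=\Spec(R\sslash a_1,\dots,a_\ell)$, route $\Bl_Z(X)$ to $\cla\dBl$ via the finite map through $\Proj R[Jt]$, and combine Proposition~\ref{prop:descent-derived-bu}, Lemma~\ref{lem:K-conn-derived-reduced}, and Lemma~\ref{lem:descent-finite-non-fp}. The paper organizes this as the single square
\[
\begin{tikzcd}
 \fib(K(X) \to \lim K(X_{Z}^{\wedge})) \ar[d, "\simeq"]\ar[r] & K(X,Z) \ar[d] \\
 \fib(K(\widetilde X) \to \lim K(\widetilde X_{Z}^{\wedge})) \ar[r] & K(\widetilde X, D)
\end{tikzcd}
\]
(with $D=\cla(\widetilde X\times_X Z)$), then passes to $\cla\widetilde X$ and applies Lemma~\ref{lem:descent-finite-non-fp}; you instead track $S$ rather than $Z$ and split the passage into four explicit steps, which is fine.

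There is one slip. You assert that the transition maps in $K(\dBl_Z^\wedge)$ are $1$-connective by Lemma~\ref{lem:properties-negative-K-theory}(1),(2), hence that your first arrow $K(X,S)\to K(\dBl,\dBl\times_X S)$ is $0$-connective. But Lemma~\ref{lem:properties-negative-K-theory}(1),(2) are statements about \emph{rings}; the derived blowup $\dBl$ is not affine, only covered by $\ell$ affines. The correct input on that side is Lemma~\ref{lem:K-conn-derived-reduced}, which gives only $(-\ell+2)$-connectivity for each $K(\dBl\times_X Z')\to K(D)$, and this is exactly what the paper invokes for the lower horizontal map. The same issue recurs when you claim $K(X,S)\to K(X,Z)$ is $1$-connective; the map $K(S)\to K(Z)$ is $1$-connective, so the induced map on fibres is only $0$-connective. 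Neither over-claim damages the conclusion, since $-\ell+1\leq 0$ and your later steps already force the bottleneck to be $(-\ell+1)$; but you should cite Lemma~\ref{lem:K-conn-derived-reduced} rather than Lemma~\ref{lem:properties-negative-K-theory} for anything living over $\dBl$.
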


\begin{proof}
By Proposition~\ref{prop:existence-reduction}, there exists a reduction $J$ of some power $I^{n}$ of $I$ generated by $\ell$ elements, say $J= (f_{1}, \dots, f_{\ell})$. As $\Bl_{I^{n}}(R) = \Bl_{I}(R)$ canonically, Lemma~\ref{lem:reduction-finiteness} yields a finite $X$-morphism $\Bl_{Z}(X)=\Bl_{I}(R) \to \Bl_{J}(R)$. Let $\widetilde X$ be the derived blowup of $X$ in $f_{1}, \dots, f_{\ell}$. Then we have a closed immersion $\Bl_{J}(R) \hookrightarrow \widetilde X$ and in total a finite morphism $\Bl_{I}(X) \to \widetilde X$. Moreover, $\widetilde X$ has a covering by $\ell$ affine open subschemes. Write $D = \cla(\widetilde X \times_{X} Z)$. 
We now argue similarly as in the proof of Lemma~\ref{lem:descent-finite-non-fp}. Consider the following commutative diagram.
\[
\begin{tikzcd}
 \fib(K(X) \to \lim K(X_{Z}^{\wedge})) \ar[d, "\simeq"]\ar[r] & K(X,Z) \ar[d] \\ 
 \fib(K(\widetilde X) \to \lim K(\widetilde X_{Z}^{\wedge})) \ar[r] & K(\widetilde X, D) 
\end{tikzcd}
\]
The left vertical map is an equivalence by Proposition~\ref{prop:descent-derived-bu} (note that $|Z| = |V(J)|$). The top horizontal map is 0-connective, the lower horizontal map is $(-\ell+1)$-connective by Lemma~\ref{lem:K-conn-derived-reduced}. Hence the right vertical map is $(-\ell+1)$-connective. Using Lemma~\ref{lem:K-conn-derived-reduced} again, we see that $K(\widetilde X, D) \to K(\cla {\widetilde X}, D)$ is $(-\ell+2)$-connective. In total, 
$K(X,Z) \to K(\cla {\widetilde X}, D)$ is $(-\ell+1)$-connective.\footnote{In fact, using the above connectivity estimates, this follows more easily from the derived generalization of Thomason's theorem \cite{ThomasonBlowup} in \cite[Thm.~3.7]{KST} which holds without any Noetherianity assumptions (with the same proof as in \emph{loc.~cit.})} By Lemma~\ref{lem:descent-finite-non-fp}, the map $K(\cla {\widetilde X}, D) \to K(\Bl_{Z}(X), E)$ is $(-\ell+1)$-connective. Hence the composite is $(-\ell+1)$-connective, as asserted.

The more general claim can be proved by exactly the same argument, using that $\mathbb A^{r}$ is affine and flat over $\Z$ and derived blowups commute with arbitrary base change.
\end{proof}

\subsection{Proof of Weibel vanishing}
	\label{subsec:proof-Weibel}

\begin{proof}[Proof of Theorem~\ref{thm:Weibel-vanishing}]
We write $N^{(r)}K(X)$ for the cofiber of the canonical split inclusion $K(X) \to K(\A^{r}_{X})$ so that we have $K(\A^{r}_{X}) \cong K(X) \oplus N^{(r)}K(X)$.
Then assertion (2) of the theorem is equivalent to the statement that $N^{(r)}K_{-i}(X)=0$ for all $i \geq \vdim(X)$.

We may assume that $d = \vdim(X)$ is finite and prove the theorem  by induction on $d$.
By Lemma~\ref{lem2;vanishing}, applied to $K$ and $\Sigma N^{(r)}K$ respectively, we may assume that $X$ is affine and local. In this case, $K_{-i}(\A^{r}_{X}) \cong K_{-i}(\A^{r}_{\cla X^{\red}})$ for  all $i\geq 0$ and all $r\geq 0$ by Lemma~\ref{lem:properties-negative-K-theory} (1), (2). So we can assume that $X$ is a classical reduced affine local scheme. 

If $d=0$, then also the Krull dimension $\dim(X)$ is $0$. As $X$ is local and reduced, it is the spectrum of a field. Hence $K_{-i}(X) = 0$ and for all $i>0$ and $N^{(r)}K(X) = 0$ (by \cite[Thm.~II.7.8]{Weibel} and the definition of negative $K$-groups), as $X$ is regular Noetherian. 

Now assume that $d$ is positive and the assertion is proven for all qcqs derived schemes that are of valuative dimension $< d$. 

\begin{claim}
The assertion of Theorem~\ref{thm:Weibel-vanishing} holds for $X$ as above under the additional assumption that $X$ is irreducible.
\end{claim}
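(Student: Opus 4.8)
The plan is to argue by induction on $d$. Since $X$ is reduced, affine, local and irreducible, it is the spectrum of a local domain $R$ with $\vdim(R) = d > 0$; the inductive hypothesis is that both assertions of Theorem~\ref{thm:Weibel-vanishing} hold for all qcqs derived schemes of valuative dimension $< d$. The key technical step I would isolate is a \emph{blow-up invariance} of negative $K$-theory: for every integral, qcqs scheme $Y$ carrying an ample line bundle with $\vdim(Y) = d$, every finitely presented closed subscheme $Z \subsetneq Y$ not containing the generic point of $Y$, and the blow-up $\pi\colon \widetilde Y = \Bl_{Z}(Y) \to Y$, the pullback maps $K_{-i}(Y) \to K_{-i}(\widetilde Y)$ are isomorphisms for $i > d$, and the maps $N^{(r)}K_{-i}(Y) \to N^{(r)}K_{-i}(\widetilde Y)$ are isomorphisms for $i \geq d$ and every $r \geq 0$. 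To prove this, consider the Zariski sheaf of spectra $F$ on $Y$ given by $F(U) = \fib(K(U) \to K(\pi^{-1}(U)))$. Because $K$-theory of affine schemes commutes with filtered colimits of rings and $\Bl_{Z}(-)$ commutes with localization, the stalk $F_{y}$ is the relative term $\fib\bigl(K(\cO_{Y,y}) \to K(\Bl_{Z_{y}}(\Spec \cO_{Y,y}))\bigr)$, where $Z_{y} = V(I_{y})$ is the restriction of $Z$ (if $Z_{y} = \emptyset$ the stalk vanishes, so we may assume $I_{y}$ is proper and nonzero). Here $\cO_{Y,y}$ is a local domain, and $\vdim(\cO_{Y,y}) \leq d$; as $Z_{y}$ avoids the generic point, Lemma~\ref{lem;dimv} gives $\vdim(Z_{y}) \leq \vdim(\cO_{Y,y}) - 1$; the exceptional divisor $E_{y}$ is a Cartier divisor of the integral scheme $\Bl_{Z_{y}}(\Spec \cO_{Y,y})$ not through its generic point, so $\vdim(E_{y}) \leq \vdim(\cO_{Y,y}) - 1$ by \cite[Prop.~2.3.2(4)]{EHIK} and the fact that a blow-up does not increase the valuative dimension; and $\ell(I_{y}) \leq \vdim(\cO_{Y,y})$ by Proposition~\ref{prop:analytic-spread-vdim}. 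Feeding the last bound into Proposition~\ref{prop:blowup-connectivity-non-fp}, combining with the long exact sequences of the relative $K$-groups of the pairs $(\Spec \cO_{Y,y}, Z_{y})$ and $(\Bl_{Z_{y}}(\Spec \cO_{Y,y}), E_{y})$, and invoking the inductive hypothesis for the lower-dimensional schemes $Z_{y}$ and $E_{y}$, a short diagram chase yields $\pi_{-i}(F_{y}) = 0$ for $i > \vdim(\cO_{Y,y})$. Lemma~\ref{lem2;vanishing} then forces $\pi_{-i}(F(Y)) = 0$ for $i > \vdim(Y) = d$, giving the claimed isomorphisms for $K$. The statement for $N^{(r)}K$ follows by the same argument applied to $\Sigma N^{(r)}K$, using the $\A^{r}$-version of Proposition~\ref{prop:blowup-connectivity-non-fp} (which carries the same connectivity bound) and the inductive $K_{-j}$-regularity of $Z_{y}$ and $E_{y}$ for $j = \vdim(\cO_{Y,y}) - 1$; the extra suspension absorbs the single degree lost to the relative cofibre terms.

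Next I would iterate. Every modification of $X = \Spec R$ is dominated by a composite of blow-ups (\cite[Cor.~5.7.12]{Raynaud-Gruson}), and any such composite is again an integral scheme of valuative dimension $d$, since a modification of an integral scheme leaves its Riemann--Zariski space unchanged and the valuative dimension of an integral scheme equals the Krull dimension of that space. Applying the blow-up invariance repeatedly, $K_{-i}(X) \to K_{-i}(X_{\lambda})$ is an isomorphism for $i > d$, and $N^{(r)}K_{-i}(X) \to N^{(r)}K_{-i}(X_{\lambda})$ is an isomorphism for $i \geq d$, for every modification $X_{\lambda} \to X$. Passing to the filtered colimit over the cofiltered system of all modifications, $K_{-i}(X) \cong \colim_{\lambda} K_{-i}(X_{\lambda})$ for $i > d$, and similarly for $N^{(r)}K$.

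It remains to compute this colimit, and this is the step I expect to be the main obstacle. The limit $\operatorname{RZ}(X) = \lim_{\lambda} X_{\lambda}$ is a spectral space with $\dim \operatorname{RZ}(X) = \vdim(X) = d$, equipped with a sheaf of rings whose stalks are the valuation rings $V$ of $\Frac(R)$ dominating $R$. By continuity of $K$-theory, $\colim_{\lambda} K(X_{\lambda})$ should be the value on $\operatorname{RZ}(X)$ of the Zariski sheafification of $U \mapsto K(\cO_{\operatorname{RZ}}(U))$, hence computed by a descent spectral sequence $E_{2}^{p,q} = H^{p}(\operatorname{RZ}(X), \mathcal{K}_{-q}) \Rightarrow \colim_{\lambda} K_{-p-q}(X_{\lambda})$ with $\mathcal{K}_{-q}$ of stalk $K_{-q}(V)$. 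There $E_{2}^{p,q} = 0$ for $p > d$ because the cohomological dimension of a spectral space is bounded by its Krull dimension (\cite[Thm.~3.12]{ClausenMathew}, or the Scheiderer argument of Lemma~\ref{lem1;vanishing}), and $\mathcal{K}_{-q} = 0$ for $q \geq 1$ because negative $K$-theory of a valuation ring vanishes (\cite{KellyMorrow}). Hence $\colim_{\lambda} K_{-i}(X_{\lambda}) = 0$ for $i > d$, proving assertion (1). For assertion (2) one runs the same computation for $\Sigma N^{(r)}K$, using that valuation rings are $K_{0}$-regular (\cite{KSTVorst}), so $N^{(r)}K_{-q}(V) = 0$ for all $q \geq 0$; this gives $N^{(r)}K_{-i}(X) = 0$ for all $i \geq 0$, so in particular $X$ is $K_{-d}$-regular. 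The delicate points are precisely the continuity statement (a cofiltered limit of schemes along \emph{proper}, not affine, transition maps) together with the combination of the finite cohomological dimension of $\operatorname{RZ}(X)$ and the vanishing/regularity input for valuation rings --- which is exactly where the valuative dimension, rather than the Krull dimension, enters decisively.
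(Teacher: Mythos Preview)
Your proposal is essentially correct but follows a genuinely different route from the paper. Both arguments share the same core step: the blow-up invariance of negative $K$-theory (and $N^{(r)}K$), proved by the stalkwise diagram chase combining Proposition~\ref{prop:blowup-connectivity-non-fp}, Proposition~\ref{prop:analytic-spread-vdim}, and the inductive hypothesis on $Z_y$ and $E_y$. The divergence is in how one concludes from this. The paper does not pass to the Riemann--Zariski limit at all: it invokes \cite[Prop.~5]{KerzStrunk}, which says that any class in negative $K$-theory of a ring of finite type over $\Z$ is killed by pullback to a \emph{single} blowup. Descending a given $\gamma$ to such a subring $R_0 \subseteq R$, pulling back the blowup to $R$, and then applying blow-up invariance once shows $\gamma = 0$ directly. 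Your approach instead iterates blow-up invariance over the entire cofiltered system of admissible blowups and identifies the colimit via the Riemann--Zariski space, using that valuation rings have connective and $\A^1$-invariant $K$-theory.

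The step you flag as delicate --- continuity along the proper transition maps --- can be made rigorous, and your instinct about how is right: the assignment $U \mapsto \colim_\lambda K(U_\lambda)$ on quasi-compact opens of $\operatorname{RZ}(X)$ is a Zariski sheaf of spectra (each finite cover descends to some $X_\lambda$, where Zariski descent for $K$ holds, and filtered colimits commute with finite limits), its stalk at $v$ is $\colim_\lambda K(\cO_{X_\lambda, v_\lambda}) = K(V_v)$ by a double-colimit swap, and then the descent spectral sequence on the finite-dimensional spectral space $\operatorname{RZ}(X)$ gives the vanishing. So nothing fails; it is just longer. The paper's approach is more economical and avoids the Riemann--Zariski machinery and the external input on $K$-theory of valuation rings, trading it for the Kerz--Strunk ``killing by blowup'' result. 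Your approach has the conceptual virtue of making transparent why the \emph{valuative} dimension is the right bound: it is literally the Krull dimension of the space on which the final descent argument runs.
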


\begin{proof}[Proof of the claim.]
Write $X=\Spec(R)$. So $R$ is a local integral domain.
Let  $\gamma$ be an element in $K_{-i}(R)$ with $i > d$ or in $N^{(r)}K_{-i}(R) \subseteq K_{-i}(\mathbb A^{r}_{R})$ with $i \geq d$ in case $r>0$. We have to show that $\gamma=0$.

As $K$-theory commutes with filtered colimits, there exists a subring $R_{0}$ of $R$, finitely generated as a $\Z$-algebra, and $\gamma_{0} \in K_{-i}(\mathbb A^{r}_{R_{0}})$ pulling back to $\gamma$. By \cite[Prop.~5]{KerzStrunk}, there exists a (finitely generated) ideal $I_{0} \subseteq R_{0}$ such that $\gamma_{0}$ is annihilated in $K_{-i}(\mathbb A^{r} \times \Bl_{I_{0}}(R_{0}))$. Let $I=RI_{0} \subseteq R$, and $Z=V(I)$. By functoriality of the blowup, $\gamma$ is annihilated in $K_{-i}(\mathbb A^{r} \times  \Bl_{Z}(X))$. Let $E \subseteq \Bl_{Z}(X)$ denote the exceptional divisor. By construction $Z \subseteq X$ is a finitely presented, proper subscheme, hence $\Bl_{Z}(X) \to X$ is a modification and thus $\vdim \Bl_{Z}(X) = \vdim X = d$ \cite[Prop.~2.3.2(6)]{EHIK}. Furthermore $\vdim(Z), \vdim(E) < d$ by \emph{loc.~cit.}, Assertion~4.

We now treat the case of $\gamma\in K_{-i}(R)$ with $i>d$. Consider the commutative diagram of exact sequences
\[
\begin{tikzcd}
K_{-i+1}(Z) \ar[r]\ar[d]  & K_{-i}(X,Z) \ar[r]\ar[d] & K_{-i}(X) \ar[r]\ar[d] & K_{-i}(Z)\ar[d] \\
K_{-i+1}(E) \ar[r]  & K_{-i}(\Bl_{Z}(X), E) \ar[r] & K_{-i}(\Bl_{Z}(X)) \ar[r] & K_{-i}(E).
\end{tikzcd}
\]
As $\vdim(Z), \vdim(E) < d$, the four outer groups vanish by induction. If $I=R$, then $\Bl_{Z}(X) = X$ and hence $\gamma=0$. So we may now assume that $I \subset R$ is a proper ideal.
By Proposition~\ref{prop:analytic-spread-vdim}  we then have $\ell(I) \leq \vdim(R)=d$. As $i > d$,  Proposition~\ref{prop:blowup-connectivity-non-fp} implies that the second vertical map is an isomorphism. Hence $K_{-i}(X) \to K_{-i}(\Bl_{Z}(X))$ is an isomorphism, too, and hence $\gamma=0$.

The argument in the case $\gamma\in N^{(r)}K_{-i}(R)$ with $i \geq d$ is exactly the same with the $K$-groups above replaced by the $N^{(r)}K$-groups. This finishes the proof of the Claim.
\end{proof}

We now consider the general case. So $X$ is now a classical, reduced, affine, local scheme of valuative dimension $d>0$. 
For ease of notation, we only consider assertion (1) of the theorem. The proof of (2) is completely parallel.
To reduce to the integral case treated in the Claim above, we apply an argument from \cite[Thm. 2.4.15]{EHIK} as follows: 
Take an integer $i > d=\vdim(X)$ and an element $\gamma\in K_{-i}(X)$. We wish to show that $\gamma=0$. 
Let 
\[ 
\cE= \{Z \hookrightarrow X \;\text{reduced, closed}\; | \;\gamma_{|Z}\not=0\in K_{-i}(Z)\}.
\]
We need to prove that $\cE =\emptyset$. Note that every $Z \in \cE$ is itself a reduced local affine scheme and that $\cE$ is ordered by inclusion. 
Let $(Z_\lambda)_{\lambda\in \Lambda}$ be a descending chain in $\cE$ and put 
$Z = \lim_{\lambda\in\Lambda}Z_\lambda$.
As $K$-theory commutes with filtered colimits of rings, we have 
$K_{-i}(Z) = \colim_{\lambda\in \Lambda} K_{-i}(Z_\lambda)$. 
So if $\gamma_{|Z}=0$, then there exists a $\lambda$ such that $\gamma_{|Z_\lambda}=0$, which is a contradiction. 
Hence $\gamma_{|Z}\not= 0$ so that $Z\in\cE$ and $Z$ is a lower bound of 
$(Z_\lambda)_{\lambda\in \Lambda}$. If $\cE\not=\emptyset$, we may apply Zorn's lemma to conclude that $\cE$ has a minimal element $Z$. As $K_{-i}(Z)\not=0$, $Z$ must be reducible. 
Let $Z^{\gen}$ be the set of the generic points of $Z$ equipped with the induced topology from the underlying topological space of $Z$.
By \cite[Cor.~2.4]{HenriksenJerison} (see also \cite[Lemma 2.4.14]{EHIK}), there exists a decomposition $Z^{\gen}=S_1\sqcup S_2$ with $S_i$ closed and non-empty for $i=1,2$. Letting $Z_i$ be the closure of $S_i$ in $Z$ with reduced scheme structure, we have $Z = Z_1\cup Z_2$ and $Z_i\cap Z^{\gen}=S_i$ for $i=1,2$. In particular, $Z_1\cap Z_2\cap Z^{\gen}=\emptyset$ so that $\vdim(Z_1\cap Z_2) < d$ by \cite[Prop.~2.3.2(4)]{EHIK}.
By excision in non-positive $K$-theory for closed coverings of affine schemes (Lemma~\ref{lem:properties-negative-K-theory}(3)), we have an exact sequence
\[ 
K_{-i+1}(Z_1 \cap Z_2) \to K_{-i}(Z) \to K_{-i}(Z_1) \oplus K_{-i}(Z_2).
\]
The group on the left-hand side vanishes by induction. As $Z$ was minimal in $\cE$, we must have
$\gamma_{| Z_i}=0$ for $i = 1, 2$. Thus we get $\gamma_{|Z}=0$, which is a contradiction. Thus we must have $\cE =\emptyset$, which completes the proof of Theorem \ref{thm:Weibel-vanishing}.
 \end{proof}

We now give a proof of Theorem~\ref{thm:Weibel-vanishing-KH}, 
reproduced from \cite[Rem.~3.5]{KellyMorrow}.
We refer to \cite[\S 2.1]{EHIK} for a discussion of the cdh topology in this generality but note that the definition of the cdh topology used in \cite{EHIK} is the one used by Suslin and Voevodsky, \cite[Def.~5.7]{SVBK}, and the proof that cdh \v{C}ech descent is equivalent to cdh excision is Voevodsky's proof from \cite{VoeHTS}, rewritten in modern language in \cite[Thm.~3.2.5]{AHW17}. Voevodsky's proof that cdh \v{C}ech descent is equivalent to cdh hyperdescent requires Noetherian hypotheses that were lifted in \cite{EHIK}.

\begin{proof}[Proof of Theorem~\ref{thm:Weibel-vanishing-KH}]
We note that homotopy $K$-theory is a cdh sheaf. This was first proven by Cisinski \cite{Cisinski} for Noetherian schemes of finite dimension (and in characteristic 0 previously by Haesemeyer \cite{Haesemeyer}), which implies the general statement by absolute Noetherian approximation; alternatively it follows from the fact that $KH$ is truncating, see \cite[Cor.~A.5]{LT}. 
In particular, we get the maps 
\begin{equation} \label{equa:LKLKKH}
L_{\cdh}K_{\geq 0} \to L_{\cdh}K \to KH
\end{equation}
which we want to show are equivalences.
By \cite[Thm.~2.4.15, Cor.~2.3.3]{EHIK} the $\infty$-topos of cdh sheaves of spaces on finitely presented $X$-schemes is locally of finite homotopy dimension and of homotopy dimension $\leq \vdim(X)$. 
This implies that we get a convergent spectral sequence 
\begin{equation} \label{equa:EKHcdh}
E_{2}^{p,q} = H^{p}(X_{\mathrm{cdh}}, \tilde\pi_{-q}KH) \Longrightarrow KH_{-p-q}(X). 
\end{equation}
where $\tilde\pi_{q}KH$ denotes the cdh sheafified homotopy groups of $KH$. 
A conservative family of points for the cdh topology is given by the spectra of henselian valuation rings \cite{GL01}, \cite[Thm.~2.3, Thm.~2.6]{GK15}, \cite[Cor.~2.4.19]{EHIK}. As  $K$-theory of valuation rings is connective and agrees with its homotopy $K$-theory \cite[Thm.~3.4]{KellyMorrow}, \cite[Lem.~4.3]{KSTVorst}, and $K$ and $KH$ commute with filtered colimits, the maps \eqref{equa:LKLKKH} are equivalences
so we get part (2) of Theorem~\ref{thm:Weibel-vanishing-KH}. The vanishing 
\[ KH_{-i}(X) = 0, \qquad \textrm{ for all } i > \vdim(X) \]
claimed in part (1) follows from the spectral sequence \eqref{equa:EKHcdh}.
\end{proof}

\bibliographystyle{amsalpha}
\bibliography{cochains}

\end{document}